\begin{document}
\title{Plane permutations and applications to a result of Zagier-Stanley and
distances of permutations  
}


\author{Ricky X. F. Chen         \and
        Christian M. Reidys 
}


\institute{Ricky X. F. Chen \and Christian M. Reidys\at
	Biocomplexity Institute and Dept. of Mathematics,
	Virginia Tech,
	1015 Life Science Circle,
	Blacksburg, VA 24061, USA\\
              \email{chen.ricky1982@gmail.com (Ricky X. F. Chen)} \\
              \email{duck@santafe.edu (Christian M. Reidys)}        %
}

\date{Received: date / Accepted: date}

\maketitle

\begin{abstract}

	In this paper, we introduce plane permutations, i.e.~pairs $\mathfrak{p}=(s,\pi)$
	where $s$
	is an $n$-cycle and $\pi$ is an arbitrary permutation,
 represented as a two-row array.
	Accordingly a plane permutation gives rise to three distinct permutations:
	the
	permutation induced by the upper horizontal ($s$), the vertical ($\pi$) and
	the
	diagonal ($D_{\mathfrak{p}}$) of the array. The latter can also be viewed as the three
	permutations of a hypermap. In particular, a map corresponds to a
	plane permutation, in which the diagonal is a fixed point-free involution.
	We
	study the transposition action on plane permutations obtained by permuting
	their diagonal-blocks.
	We establish basic properties of plane permutations and study
	transpositions and
	exceedances and derive various enumerative results. In particular, we prove a recurrence
	for the number
	of plane permutations having a fixed diagonal and $k$ cycles in the
	vertical,
	generalizing Chapuy's recursion for maps filtered by the genus.
	As applications of this framework, we present a combinatorial proof of a
	result of
	Zagier and Stanley, on the number of $n$-cycles $\omega$, for which
	the
	product $\omega(1~2~\cdots ~n)$ has exactly $k$ cycles.
	Furthermore, we integrate studies on the transposition and
	block-interchange distance of permutations as well as the reversal
	distance
	of signed permutations.
	Plane permutations allow us to generalize and recover various lower
	bounds
	for transposition and block-interchange distances and to connect
	reversals
	with block-interchanges.

\keywords{Plane permutation \and Hypermap
\and Stirling number of the first kind \and Exceedance \and Transposition \and Reversal}
\noindent{\bf Mathematics Subject Classification (2010)}: 05A05, 05A15, 92B05
\end{abstract}

\section{Introduction}

Let $\mathcal{S}_n$ denote the group of permutations, i.e.~the group
of bijections from $[n]=\{1,\dots,n\}$ to $[n]$, where the multiplication
is the composition of maps.
We shall discuss the following three representations of a permutation $\pi$ on $[n]$:\\
\emph{two-line form:} the top line lists all elements in $[n]$, following the natural order.
The bottom line lists the corresponding images of the elements on the top line, i.e.
\begin{eqnarray*}
\pi=\left(\begin{array}{ccccccc}
1&2& 3&\cdots &n-2&{n-1}&n\\
\pi(1)&\pi(2)&\pi(3)&\cdots &\pi({n-2}) &\pi({n-1})&\pi(n)
\end{array}\right).
\end{eqnarray*}
\emph{one-line form:} $\pi$ is represented as a sequence $\pi=\pi(1)\pi(2)\cdots \pi(n-1)\pi(n)$.\\
\emph{cycle form:} regarding $\langle \pi\rangle$ as a cyclic group, we represent $\pi$ by its
collection of orbits (cycles).
The set consisting of the lengths of these disjoint cycles is called the cycle-type of $\pi$.
We can encode this set into a non-increasing integer sequence $\lambda=\lambda_1
\lambda_2\cdots$, where $\sum_i \lambda_i=n$, or as $1^{a_1}2^{a_2}\cdots n^{a_n}$, where
we have $a_i$ cycles of length $i$.
The number of disjoint cycles of $\pi$ will be denoted by $C(\pi)$.
A cycle of length $k$ will be called a $k$-cycle. A cycle of odd and even length will be called an odd
and even cycle, respectively. It is well known that all permutations of a same cycle-type
form a conjugacy class of $\mathcal{S}_n$.

Zagier~\cite{zag} and Stanley~\cite{stan3} studied the following problem:
how many permutations $\omega$ from a fixed conjugacy class of $\mathcal{S}_n$ such that
the product $\omega (1~2~\cdots ~n)$ has exactly $k$ cycles?

Both authors employed the character theory of the symmetric group in order to obtain
certain generating polynomials. Then,
by evaluating these polynomials at specific conjugacy classes,
Zagier obtained an explicit formula for the number of rooted one-face maps (i.e.,
the conjugacy class consists of involutions without fixed points), and
both, Zagier as well as Stanley, obtained the following surprisingly simple
formula for the conjugacy class $n^1$:
the number $\xi_{1,k}(n)$ of $\omega$ for which $\omega (1~2~\cdots ~n)$ has exactly $k$ cycles is $0$ if
$n-k$ is odd, and otherwise $\xi_{1,k}(n)=\frac{2C(n+1,k)}{n(n+1)}$
where $C(n,k)$ is the unsigned Stirling number of the first kind,
i.e., the number of permutations on $[n]$ with $k$ cycles.
Stanley asked for a combinatorial proof for this result~\cite{stan3}.
Such proofs were later given in~\cite{feray} and in~\cite{schaef}.
In this paper, we will give another combinatorial proof, using the framework of plane permutations.
For this purpose, we will study exceedances
via a natural transposition action on plane permutations.

The transposition action on plane permutations has also direct connections to various distances
of permutations and signed permutations.
This ties to important problems in the context of bioinformatics,
in particular the evolution of genomes by rearrangements in DNA as well as RNA.
For the related studies and general biological background, we refer to
~\cite{pev1,pev2,pev3,pev4,chri1,chri2,bul,braga,eli,yan} and the references therein.

{
An outline of this paper is as follows. 
In Section $2$, we develop our framework.
We introduce plane permutations, $\mathfrak{p}=(s,\pi)$, study transpositions, exceedances, establish basic
properties and derive various enumerative results.

}

In Section $3$, as the first application of the plane permutation framework, we present our proof for the result of Zagier and Stanley mentioned above.
{ To this end, we view ordinary permutations as a particular class of plane permutations and classify them by their diagonals.
We combinatorially prove a new recurrence
satisfied by the unsigned Stirling numbers of the first kind which is 
the same recurrence for $\xi_{1,k}(n)$ derived from one of the obtained recurrences on 
plane permutations so that the Zagier-Stanley result follows.}

{ 
	
	In section $4$ and $5$,
	we study the transposition distance and the block-interchange distance of permutations,
	respectively.
	We derive general lower bounds in the form of optimizing
	a free parameter succinctly from two lemmas regarding 
	the transposition action on plane permutations. 
	This is different from the existing graph model approach~\cite{pev1,pev2,pev3,chri1} and the permutation group theory approach
	~\cite{feimei,hhtl,huanglu,labarre2,Meid}.
	The existing lower bounds, e.g., Bafna and Pevzner~\cite{pev1}, and Christie~\cite{chri2},
	can be refined by a particular choice of the free parameter.
	Our formula of the lower bound motivates several
	optimization problems as well. We will completely solve one of them that is to determine $\max_{\gamma} |C(\alpha\gamma)-C(\gamma)|$
	for a fixed permutation $\alpha$ when $\gamma$ ranges over all permutations.

	In Section $6$, we study the reversal distance of signed permutations.
	By translating the reversal distance of signed
	permutations into block-interchange distance of permutations with restricted block-interchanges, we
	prove a new formula for a lower bound of the reversal distance.
	We then observe that this bound is typically equal to the reversal distance.

}


\section{Plane permutations}

In this section, we will introduce plane permutations and present basic results on that.

\begin{definition}[Plane permutation]
A plane permutation on $[n]$ is a pair $\mathfrak{p}=(s,\pi)$ where $s=(s_i)_{i=0}^{n-1}$
is an $n$-cycle and $\pi$ is an arbitrary permutation on $[n]$. 
\end{definition}\label{2def1}

 Given $s=(s_0~s_1~\cdots ~s_{n-1})$,
a plane permutation $\mathfrak{p}=(s,\pi)$ is represented by a two-row array:
\begin{equation}
\mathfrak{p}=\left(\begin{array}{ccccc}
s_0&s_1&\cdots &s_{n-2}&s_{n-1}\\
\pi(s_0)&\pi(s_1)&\cdots &\pi(s_{n-2}) &\pi(s_{n-1})
\end{array}\right).
\end{equation}
The permutation $D_{\mathfrak{p}}$ induced by the diagonal-pairs (cyclically) in the array, i.e., $D_{\mathfrak{p}}(\pi(s_{i-1}))=s_i$ for $0<i< n$, and
$D_{\mathfrak{p}}(\pi(s_{n-1}))=s_0$, is called the diagonal of $\mathfrak{p}$.

{\bf Observation:} $D_{\mathfrak{p}}=
s \pi^{-1}$.

In a permutation $\pi$ on $[n]$, $i$ is called an exceedance if $i<\pi(i)$ and an anti-exceedance otherwise.
Note that $s$ induces a partial order $<_s$,
where $a<_s b$ if $a$ appears before $b$ in $s$ from left to right (with the left most element $s_0$).
These concepts then can be generalized for plane permutations as follows:
\begin{definition}
For a plane permutation $\mathfrak{p}=(s,\pi)$, an element $s_i$ is called an
exceedance of $\mathfrak{p}$ if $s_i<_s \pi(s_i)$, and an anti-exceedance if $s_i\ge_s \pi(s_i)$.
\end{definition}

In the following, we mean by ``the cycles of $\mathfrak{p}=(s,\pi)$'' the cycles of $\pi$ and
any comparison of elements in $s,~\pi$ and $D_{\mathfrak{p}}$ references $<_s$.

Obviously, each $\mathfrak{p}$-cycle contains at least one anti-exceedance as it contains
a minimum, $s_i$, for which $\pi^{-1}(s_i)$ will be an anti-exceedance. We call these trivial anti-exceedances
and refer to a non-trivial anti-exceedance as an NTAE. Furthermore, in any cycle of length
greater than one, its minimum is always an exceedance.

It should be easy for the reader to check that
the number of exceedances of $\mathfrak{p}$ does not depend on how we write $s$ in the top row in the two-row representation of $\mathfrak{p}$ although the set of exceedances may vary according to different cyclic shift of $s$.
Let $Exc(\mathfrak{p})$ and $AEx(\mathfrak{p})$ denote the number of exceedances and
anti-exceedances of $\mathfrak{p}$, respectively. For $D_{\mathfrak{p}}$, the quantities
$Exc(D_{\mathfrak{p}})$ and $AEx(D_{\mathfrak{p}})$ are defined in reference to $<_s$.

\begin{lemma}\label{2lem1}
For a plane permutation $\mathfrak{p}=(s,\pi)$, we have
\begin{equation}
Exc(\mathfrak{p})=AEx(D_{\mathfrak{p}})-1.
\end{equation}
\end{lemma}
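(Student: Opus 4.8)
The plan is to pass from the geometric language of exceedances to the linear order $s_0 <_s s_1 <_s \cdots <_s s_{n-1}$, encode each relevant comparison as an inequality between integer ranks, and then match the exceedances of $\mathfrak{p}$ with all but one of the anti-exceedances of $D_{\mathfrak{p}}$ term by term. The single unmatched anti-exceedance of $D_{\mathfrak{p}}$ will account for the $-1$.

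Concretely, I would first rewrite the defining relation of the diagonal in shifted form: reading the diagonal pairs of the two-row array cyclically, $D_{\mathfrak{p}}$ sends $\pi(s_i)\mapsto s_{i+1}$ for every $i$, with indices modulo $n$ (so that $\pi(s_{n-1})\mapsto s_0$). Next I would introduce a rank coordinate $p_i$ defined by $\pi(s_i)=s_{p_i}$, i.e.\ $p_i$ is the position of $\pi(s_i)$ in $s$. In these terms the two relevant conditions become purely numerical: $s_i$ is an exceedance of $\mathfrak{p}$ exactly when $i<p_i$, while the element $\pi(s_i)=s_{p_i}$ is an anti-exceedance of $D_{\mathfrak{p}}$ exactly when $s_{p_i}\ge_s s_{i+1}$, that is, when $p_i\ge i+1$.

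The crux is then the elementary observation that for every index in the range $0\le i\le n-2$ these two conditions are identical, since $p_i\ge i+1$ is equivalent to $i<p_i$. Because $\pi$ is a bijection, the assignment $i\mapsto \pi(s_i)$ enumerates each element of $[n]$ exactly once, so $AEx(D_{\mathfrak{p}})$ is precisely the number of indices $i$ for which $\pi(s_i)$ is an anti-exceedance of $D_{\mathfrak{p}}$. Thus the counts of exceedances of $\mathfrak{p}$ and anti-exceedances of $D_{\mathfrak{p}}$ agree contribution-by-contribution over $0\le i\le n-2$.

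What remains is the wrap-around index $i=n-1$, which is exactly where the two counts diverge: $s_{n-1}$ can never be an exceedance of $\mathfrak{p}$ (nothing follows it in $s$), yet $D_{\mathfrak{p}}$ sends $\pi(s_{n-1})$ to the minimum $s_0$, so $\pi(s_{n-1})$ is unconditionally an anti-exceedance of $D_{\mathfrak{p}}$. This lone extra contribution gives $AEx(D_{\mathfrak{p}})=Exc(\mathfrak{p})+1$, which is the assertion. I expect the only delicate step to be the careful bookkeeping of this cyclic boundary, ensuring the modular index $s_n=s_0$ is treated consistently and that the equality case $p_i=i$ (a fixed point of $\pi$) is handled correctly on both sides. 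One could alternatively run the argument through the observation $D_{\mathfrak{p}}=s\pi^{-1}$ and track how left-multiplication by $s$ shifts ranks by one, but the explicit rank computation above isolates the boundary term most transparently.
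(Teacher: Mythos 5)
Your proof is correct and is essentially the paper's own argument: both establish that for $0\le i\le n-2$ the element $s_i$ is an exceedance of $\mathfrak{p}$ if and only if $\pi(s_i)$ is an anti-exceedance of $D_{\mathfrak{p}}$ (your rank coordinates $p_i$ are just a notational repackaging of the paper's equivalence $s_i<_s\pi(s_i)\Leftrightarrow\pi(s_i)\geq_s s_{i+1}$), and both then observe that the boundary index $i=n-1$ contributes an unconditional anti-exceedance of $D_{\mathfrak{p}}$ but never an exceedance of $\mathfrak{p}$, yielding the offset of $1$.
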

\begin{proof} By construction of the diagonal permutation $D_{\mathfrak{p}}$, we have
$$
\forall\;  0\leq i<n-1, \quad s_i<_s\pi(s_i)\qquad \Longleftrightarrow \qquad
                           \pi(s_i)\geq_s D_{\mathfrak{p}}(\pi(s_i))=s_{i+1}.
$$
Note that $s_{n-1}$ is always an anti-exceedance of $\mathfrak{p}$ since
$s_{n-1} \ge \pi(s_{n-1})$, and that $\pi(s_{n-1})$ is always an anti-exceedance of
$D_{\mathfrak{p}}$ since $D_{\mathfrak{p}}(\pi(s_{n-1}))=s(s_{n-1})=s_0$ and
$\pi(s_{n-1}) \ge s_0$. Thus we have
$$
Exc(\mathfrak{p})=AEx(D_{\mathfrak{p}})-1,
$$
whence the lemma.\qed
\end{proof}

\begin{proposition}\label{2pro1}
For a plane permutation $\mathfrak{p}=(s,\pi)$ on $[n]$, the sum of the number of
cycles in $\pi$ and in $D_{\mathfrak{p}}$ is smaller than $n+2$.
\end{proposition}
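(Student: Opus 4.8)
The plan is to sandwich the quantity $Exc(\mathfrak{p})$ between a lower bound involving $C(D_{\mathfrak{p}})$ and an upper bound involving $C(\pi)$, and then close the loop with Lemma~\ref{2lem1}. The two one-sided bounds both come from the elementary observation, already recorded in the text, that every cycle contributes a trivial anti-exceedance, i.e.\ that the number of anti-exceedances of a plane permutation is at least its number of cycles.

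First I would exploit the trivial anti-exceedances of $\pi$ itself. Picking in each $\pi$-cycle the $<_s$-minimal element $s_i$, its preimage $\pi^{-1}(s_i)$ is an anti-exceedance, and distinct cycles yield distinct such elements; hence $AEx(\mathfrak{p})\ge C(\pi)$. Since every element of $[n]$ is either an exceedance or an anti-exceedance, $Exc(\mathfrak{p})+AEx(\mathfrak{p})=n$, so this rearranges to $Exc(\mathfrak{p})=n-AEx(\mathfrak{p})\le n-C(\pi)$. Next I would apply exactly the same counting argument to the pair $(s,D_{\mathfrak{p}})$. The key point here is that this pair is itself a plane permutation, because $s$ is still an $n$-cycle, so the anti-exceedance count $AEx(D_{\mathfrak{p}})$ (taken, as defined, with respect to the same order $<_s$) obeys $AEx(D_{\mathfrak{p}})\ge C(D_{\mathfrak{p}})$.

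It then remains only to chain these with Lemma~\ref{2lem1}. That lemma converts $Exc(\mathfrak{p})$ into $AEx(D_{\mathfrak{p}})-1$, so combining the three inequalities gives
\begin{equation*}
C(D_{\mathfrak{p}})-1 \;\le\; AEx(D_{\mathfrak{p}})-1 \;=\; Exc(\mathfrak{p}) \;\le\; n-C(\pi),
\end{equation*}
whence $C(\pi)+C(D_{\mathfrak{p}})\le n+1<n+2$, as claimed. The only genuine subtlety, and the step I would flag as the main obstacle, is recognizing that the cycle-versus-anti-exceedance bound may legitimately be transferred to $D_{\mathfrak{p}}$: one must use $<_s$ (not a fresh order) and view $(s,D_{\mathfrak{p}})$ as a bona fide plane permutation, which is precisely the hypothesis that makes Lemma~\ref{2lem1} applicable. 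I would also note in passing that this proposition is the single-component, $C(s)=1$ special case of the classical genus inequality $C(\alpha)+C(\beta)+C(\alpha\beta)\le n+2$ applied to the factorization $s=D_{\mathfrak{p}}\,\pi$, which provides a reassuring sanity check on the bound.
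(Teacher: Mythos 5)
Your proof is correct and follows essentially the same route as the paper's: both rest on the observation that every cycle contributes at least one anti-exceedance (applied to $\pi$ and to $D_{\mathfrak{p}}$, the latter with respect to $<_s$), combined with Lemma~\ref{2lem1} and the identity $Exc(\mathfrak{p})+AEx(\mathfrak{p})=n$. The only difference is cosmetic — you chain the inequalities through $Exc(\mathfrak{p})$ while the paper chains them through $AEx(\mathfrak{p})$ — so the two arguments coincide.
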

\begin{proof} Since each cycle has at least one anti-exceedance,
we have $AEx(\mathfrak{p})\geq C(\pi)$ and $AEx(D_{\mathfrak{p}})\geq C(D_{\mathfrak{p}})$. Using Lemma~\ref{2lem1},
$$
AEx(\mathfrak{p}) = n-Exc(\mathfrak{p}) = n+1-AEx(D_{\mathfrak{p}}) \geq C(\pi).
$$
Therefore,
$$
n+1 \geq C(\pi) + AEx(D_{\mathfrak{p}}) \geq C(\pi)+C(D_{\mathfrak{p}}),
$$
whence the proposition.\qed
\end{proof}

In fact, based on Proposition~\ref{2pro1}, it can be proved that the maximum $n+1$ is attained for any given $\pi$,
see~\cite{ChR1} for instance.

\begin{proposition}\label{2pro2}
For a plane permutation $\mathfrak{p}=(s,\pi)$ on $[n]$, the quantities $C(\pi)$ and
$C(D_{\mathfrak{p}})$ satisfy
\begin{equation}
C(\pi)+C(D_{\mathfrak{p}})\equiv n-1 \pmod{2}.
\end{equation}
\end{proposition}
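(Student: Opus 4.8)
The plan is to derive the congruence from the sign homomorphism $\mathrm{sgn}\colon \mathcal{S}_n \to \{+1,-1\}$, using the Observation $D_{\mathfrak{p}}=s\pi^{-1}$, which I rewrite as $s=D_{\mathfrak{p}}\,\pi$. Because $\mathrm{sgn}$ is a group homomorphism, this factorization gives $\mathrm{sgn}(s)=\mathrm{sgn}(D_{\mathfrak{p}})\,\mathrm{sgn}(\pi)$, and the whole statement reduces to comparing the three signs.

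The key arithmetic input is the standard fact that a permutation $\sigma$ on $[n]$ whose cycle decomposition (with fixed points counted as $1$-cycles) consists of $C(\sigma)$ cycles has $\mathrm{sgn}(\sigma)=(-1)^{n-C(\sigma)}$; this is immediate since a $k$-cycle has sign $(-1)^{k-1}$ and the cycle lengths sum to $n$. Applied to $\pi$ and to $D_{\mathfrak{p}}$ this yields $\mathrm{sgn}(\pi)=(-1)^{n-C(\pi)}$ and $\mathrm{sgn}(D_{\mathfrak{p}})=(-1)^{n-C(D_{\mathfrak{p}})}$. Since $s$ is an $n$-cycle, $C(s)=1$ and $\mathrm{sgn}(s)=(-1)^{n-1}$.

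Substituting into $\mathrm{sgn}(s)=\mathrm{sgn}(D_{\mathfrak{p}})\,\mathrm{sgn}(\pi)$ gives
\[
(-1)^{n-1}=(-1)^{2n-C(\pi)-C(D_{\mathfrak{p}})},
\]
and comparing the exponents modulo $2$ yields $n-1\equiv C(\pi)+C(D_{\mathfrak{p}})\pmod 2$, which is exactly the claim. I expect essentially no obstacle here: the only points requiring care are that $C(\cdot)$ must count fixed points as $1$-cycles, so that the sign formula applies verbatim, and that $D_{\mathfrak{p}}$ is a genuine element of $\mathcal{S}_n$, which is precisely what the Observation $D_{\mathfrak{p}}=s\pi^{-1}$ guarantees. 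If a more self-contained argument were preferred, one could instead expand $\pi$ and $D_{\mathfrak{p}}$ into transpositions, note that any permutation with $C$ cycles is a product of a number of transpositions congruent to $n-C \pmod 2$, and observe that the transposition counts on the two sides of $s=D_{\mathfrak{p}}\,\pi$ must agree with the $n-1$ transpositions of the $n$-cycle $s$ modulo $2$; this is the same computation rewritten in additive notation.
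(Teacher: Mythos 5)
Your proof is correct and takes essentially the same approach as the paper: both rest on the factorization $s=D_{\mathfrak{p}}\pi$ together with the parity of a permutation, the paper phrasing it by counting transpositions ($n-1$ on the left versus $(n-C(\pi))+(n-C(D_{\mathfrak{p}}))$ on the right) and you phrasing the identical computation multiplicatively via the sign homomorphism. Indeed, the ``more self-contained'' variant you sketch at the end is word-for-word the paper's own proof.
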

\begin{proof} In view of $s=D_{\mathfrak{p}}\pi$, the parity of both sides are equal.
Since a $k$-cycle can be written as a product of $k-1$ transpositions, the parity of the
LHS is the same as $n-1$ while the parity of the RHS is the same as $(n-C(\pi)) + (n-C(D_\mathfrak{p}))$,
whence the proposition.\qed
\end{proof}

Given a plane permutation $(s,\pi)$ on $[n]$ and a sequence $h=(i,j,k,l)$, such that $i\leq j<k \leq l$
and $\{i,j,k,l\}\subset [n-1]$, let
$$
s^h=(s_0 ~s_1 ~\dots ~s_{i-1} ~\underline{s_k ~\dots ~s_l} ~s_{j+1} ~\dots ~s_{k-1} ~
\underline{s_i ~\dots ~s_j} ~s_{l+1} ~\dots),
$$
i.e.~the $n$-cycle obtained by transposing the blocks $[s_i,s_j]$ and $[s_k,s_l]$ in $s$.
Note that in case of $j+1=k$, we have
$$
s^h=(s_0 ~s_1 ~\dots ~s_{i-1} ~\underline{s_k ~\dots ~s_l} ~\underline{s_i ~\dots ~s_j} ~s_{l+1} ~\dots).
$$
Let furthermore
$$
\pi^h=D_{\mathfrak{p}}^{-1} s^h,
$$
that is, the derived plane permutation, $(s^h,\pi^h)$, can be represented as

\begin{eqnarray*}
\left(
\vcenter{\xymatrix@C=0pc@R=1pc{
\cdots s_{i-1}\ar@{->>}[d]  & s_k\ar@{--}[dl] &\cdots & s_l\ar@{--}[dl]\ar@{->>}[d] & s_{j+1} &\hspace{-0.5ex}\cdots\hspace{-0.5ex} &
s_{k-1}\ar@{->>}[d] & s_i\ar@{--}[dl] &\cdots & s_{j}\ar@{--}[dl]\ar@{->>}[d] & s_{l+1}  \cdots\\
\cdots \pi(s_{k-1}) & \pi(s_k) & \cdots & \pi(s_j) & \pi(s_{j+1}) &\hspace{-0.5ex}\cdots\hspace{-0.5ex} & \pi(s_{i-1}) & \pi(s_i) &\cdots & \pi(s_l)& \pi(s_{l+1}) \cdots
}}
\right).
\end{eqnarray*}
We write $(s^h,\pi^h)= \chi_h\circ(s,\pi)$.
Note that the bottom row of the two-row representation of $(s^h,\pi^h)$
is obtained by transposing the blocks $[\pi(s_{i-1}),\pi(s_{j-1})]$ and
$[\pi(s_{k-1}),\pi(s_{l-1})]$ of the bottom row of $(s,\pi)$.
In the following, we refer to general $\chi_h$ as block-interchange and for the special case of $k=j+1$,
we refer to $\chi_h$ as transpose.
As a result, we observe
\begin{lemma}\label{2lemx1}
Let $(s,\pi)$ be a plane permutation on $[n]$ and $(s^h,\pi^h)=\chi_h \circ (s,\pi)$ for $h=(i,j,k,l)$.
Then, $\pi(s_r)=\pi^h(s_r)$ if $r\in \{0,1,\ldots,n-1\}\setminus \{i-1,j,k-1,l\}$. Moreover,
for $j+1<k$
\begin{equation*}
\pi^h(s_{i-1})=\pi(s_{k-1}), \quad \pi^h(s_j)=\pi(s_l), \quad \pi^h(s_{k-1})=\pi(s_{i-1}), \quad
\pi^h(s_l)=\pi(s_j),
\end{equation*}
and for $j=k-1$, we have
$$
\pi^h(s_{i-1})=\pi(s_{j}), \quad \pi^h(s_j)=\pi(s_l), \quad \pi^h(s_l)=\pi(s_{i-1}).
$$
\end{lemma}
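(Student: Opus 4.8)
The plan is to reduce the entire statement to tracking the successor function of the top row, using the identity $D_{\mathfrak p}=s\pi^{-1}$ recorded in the Observation. Rewriting it as $\pi=D_{\mathfrak p}^{-1}s$, and recalling that $\pi^h$ is defined by $\pi^h=D_{\mathfrak p}^{-1}s^h$, I would first note that for every element $x$,
\[
\pi(x)=D_{\mathfrak p}^{-1}\bigl(s(x)\bigr),\qquad \pi^h(x)=D_{\mathfrak p}^{-1}\bigl(s^h(x)\bigr).
\]
Since $D_{\mathfrak p}^{-1}$ is a bijection, this yields the crucial equivalence
\[
\pi^h(x)=\pi(x)\iff s^h(x)=s(x),
\]
so the whole lemma becomes a question about where the $n$-cycles $s$ and $s^h$ disagree as successor functions, together with an explicit evaluation of $D_{\mathfrak p}^{-1}$ at the relevant images. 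In particular, writing $s(s_r)=s_{r+1}$, the value $\pi(s_r)=D_{\mathfrak p}^{-1}(s_{r+1})$ depends only on the successor of $s_r$.

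Next I would compare the successors in $s$ and in $s^h$ directly from the displayed form of $s^h$. Inside each of the three blocks $[s_k\cdots s_l]$, $[s_{j+1}\cdots s_{k-1}]$, $[s_i\cdots s_j]$, and outside the transposed region, consecutive elements are unchanged, so $s^h(s_r)=s(s_r)$ there. The successor changes precisely at the four block boundaries, namely at the elements $s_{i-1}$, $s_l$, $s_{k-1}$ and $s_j$; hence, by the equivalence above, $\pi^h(s_r)=\pi(s_r)$ for all $r\notin\{i-1,j,k-1,l\}$, which is the first assertion.

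Finally I would read off the four boundary successors from $s^h$ and apply $D_{\mathfrak p}^{-1}$, using $D_{\mathfrak p}^{-1}(s_{m+1})=\pi(s_m)$ throughout. For $j+1<k$ the new successors are $s_{i-1}\mapsto s_k$, $s_l\mapsto s_{j+1}$, $s_{k-1}\mapsto s_i$, $s_j\mapsto s_{l+1}$, giving $\pi^h(s_{i-1})=\pi(s_{k-1})$, $\pi^h(s_l)=\pi(s_j)$, $\pi^h(s_{k-1})=\pi(s_{i-1})$, $\pi^h(s_j)=\pi(s_l)$, exactly as stated. The only point requiring care is the degenerate case $j=k-1$, where the middle block is empty and the two elements $s_{k-1}$ and $s_j$ coincide; here the boundary set collapses to $\{i-1,j,l\}$, the successors become $s_{i-1}\mapsto s_{j+1}$, $s_l\mapsto s_i$, $s_j\mapsto s_{l+1}$, and the same evaluation produces the three listed identities. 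I expect this bookkeeping at the block boundaries — and in particular the coincidence $s_{k-1}=s_j$ in the degenerate case — to be the only genuinely delicate step; the algebraic core is the one-line equivalence above.
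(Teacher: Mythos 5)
Your proof is correct and is essentially the paper's own argument made explicit: the paper presents this lemma as an observation read off from the definition $\pi^h=D_{\mathfrak p}^{-1}s^h$ and the two-row array, which is exactly the successor-function bookkeeping you formalize via the equivalence $\pi^h(x)=\pi(x)\iff s^h(x)=s(x)$. Your handling of the degenerate case $j=k-1$, where the exceptional set collapses to $\{i-1,j,l\}$, matches the statement precisely.
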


We shall proceed by analyzing the induced changes of the $\pi$-cycles when passing
to $\pi^h$. By Lemma~\ref{2lemx1}, only the $\pi$-cycles containing $s_{i-1}$, $s_{j}$,
$s_l$ will be affected so that only these changes will be explicitly displayed.

\begin{lemma}\label{2lem4}
Let $(s^h, \pi^h)=\chi_h \circ (s, \pi)$, where $h=(i,j,j+1,l)$.
Then there exist the following six possible scenarios for the pair $(\pi,\pi^h)$:

\begin{center}
\begin{tabular}{|c|c|c|}
\hline
Case~$1$ &$\pi$& $(s_{i-1} ~v_1^i ~\ldots v_{m_i}^i)  (s_j ~v_1^j ~\ldots v_{m_j}^j) (s_l ~v_1^l ~\ldots v_{m_l}^l)$\\\cline{2-3}
&$\pi^h$&$ (s_{i-1} ~v_1^j ~\ldots v_{m_j}^j  ~s_{j}  ~v_1^l ~\ldots v_{m_l}^l ~s_l ~v_1^i ~\ldots v_{m_i}^i)$\\
\hline\hline
Case~$2$ &$\pi$& $(s_{i-1} ~v_1^i  ~\ldots v_{m_i}^i ~s_l ~v_1^l ~\ldots ~v_{m_l}^l ~s_j  ~v_1^j ~\ldots ~v_{m_j}^j)$\\\cline{2-3}
&$\pi^h$& $(s_{i-1} ~v_1^j ~\ldots ~v_{m_j}^j)  (s_j ~v_1^l ~\ldots ~v_{m_l}^l) (s_l ~v_1^i ~\ldots ~v_{m_i}^i)$\\
\hline\hline
Case~$3$ &$\pi$& $(s_{i-1} ~v_1^i ~\ldots ~v_{m_i}^i ~s_j ~v_1^j ~\ldots ~v_{m_j}^j ~s_l ~v_1^l ~\ldots ~v_{m_l}^l)$\\\cline{2-3}
&$\pi^h$& $(s_{i-1} ~v_1^j ~\ldots ~v_{m_j}^j ~s_l ~v_1^i ~\ldots ~v_{m_i}^i ~s_j ~v_1^l ~\ldots ~v_{m_l}^l)$\\
\hline\hline
Case~$4$ &$\pi$& $(s_{i-1} ~v_1^i ~\ldots ~v_{m_i}^i ~s_{j} ~v_1^j ~\ldots ~v_{m_j}^j) (s_l ~v_1^l ~\ldots ~v_{m_l}^l)$\\\cline{2-3}
&$\pi^h$&$ (s_{i-1} ~v_1^j ~\ldots ~v_{m_j}^j) (s_{j} ~v_1^l ~\ldots ~v_{m_l}^l ~s_l ~v_1^i ~\ldots ~v_{m_i}^i)$\\
\hline\hline
Case~$5$ &$\pi$& $(s_{i-1} ~v_1^i ~\ldots ~v_{m_i}^i)  (s_{j} ~v_1^j ~\ldots ~v_{m_j}^j ~s_l ~v_1^l ~\ldots ~v_{m_l}^l)$\\\cline{2-3}
&$\pi^h$&$ (s_{i-1} ~v_1^{j} ~\ldots ~v_{m_j}^j ~s_l ~v_1^i ~\ldots ~v_{m_i}^i) (s_{j} ~v_1^l ~\ldots ~v_{m_l}^l)$\\
\hline\hline
Case~$6$ &$\pi$& $(s_{i-1} ~v_1^i ~\ldots ~v_{m_i}^i ~s_l ~v_1^l ~\ldots ~v_{m_l}^l) (s_{j} ~v_1^j ~\ldots ~v_{m_j}^j)$\\\cline{2-3}
&$\pi^h$&$ (s_{i-1} ~v_1^j ~\ldots ~v_{m_j}^j ~s_{j} ~v_1^l ~\ldots ~v_{m_l}^l) (s_l ~v_1^i ~\ldots ~v_{m_i}^i)$\\
\hline
\end{tabular}
\end{center}
\end{lemma}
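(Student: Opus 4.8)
The plan is to reduce the entire statement to one algebraic identity and then read off each of the six rows by tracing cycles. First I would pin down exactly what the transpose does to $\pi$. Specializing Lemma~\ref{2lemx1} to $h=(i,j,j+1,l)$, the permutation $\pi^h$ coincides with $\pi$ at every point other than $s_{i-1},s_j,s_l$, where
\[
\pi^h(s_{i-1})=\pi(s_j),\qquad \pi^h(s_j)=\pi(s_l),\qquad \pi^h(s_l)=\pi(s_{i-1}).
\]
Equivalently, $\pi^h=\pi\circ(s_{i-1}\ s_j\ s_l)$, i.e.\ right multiplication by a single $3$-cycle. One sees this directly from the observation $D_{\mathfrak{p}}=s\pi^{-1}$: since $\pi^h=D_{\mathfrak{p}}^{-1}s^h=\pi s^{-1}s^h$, and since in the transpose case $s^h$ agrees with $s$ except that $s^h(s_{i-1})=s_{j+1}$, $s^h(s_j)=s_{l+1}$, $s^h(s_l)=s_i$, a one-line check gives $s^{-1}s^h=(s_{i-1}\ s_j\ s_l)$. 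Thus the lemma is nothing but a complete description of how right multiplication by the $3$-cycle $(s_{i-1}\ s_j\ s_l)$ reshuffles the cycles of $\pi$.

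Writing $a=s_{i-1}$, $b=s_j$, $c=s_l$ (three distinct elements, because $i-1<j<l$), I would next argue that the six rows are exactly the possible relative positions of $a,b,c$ among the cycles of $\pi$. Since $\pi$ and $\pi^h$ differ only on $\{a,b,c\}$, only the cycles meeting this set can change, so it suffices to classify these. There are three possibilities: $a,b,c$ lie in three distinct cycles (one configuration, Case~$1$); they lie in a single cycle, which, read from $a$, has $b$ before $c$ or $c$ before $b$ (two configurations, Cases~$3$ and~$2$); or exactly two of them share a cycle, the pair being $\{a,b\}$, $\{b,c\}$ or $\{a,c\}$ with the internal order then forced (three configurations, Cases~$4$, $5$, $6$). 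This trichotomy is exhaustive and mutually exclusive, and matches the six rows.

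For the verification itself I would, in each row, take the displayed cycle form of $\pi$ and trace the orbits of $\pi^h$, using that $\pi^h=\pi$ along every tail $v_1^\bullet\cdots v_{m_\bullet}^\bullet$ and that $\pi^h$ splices at the three points via $a\mapsto\pi(b)$, $b\mapsto\pi(c)$, $c\mapsto\pi(a)$. In Case~$1$, for instance, one follows
\[
s_{i-1}\to v_1^j\to\cdots\to v_{m_j}^j\to s_j\to v_1^l\to\cdots\to v_{m_l}^l\to s_l\to v_1^i\to\cdots\to v_{m_i}^i\to s_{i-1},
\]
which is precisely the single cycle asserted. A useful bookkeeping device is that the order-$3$ map $\pi\mapsto\pi(s_{i-1}\ s_j\ s_l)$ permutes the six configurations in two $3$-orbits, $1\to3\to2\to1$ and $4\to5\to6\to4$ (reading output-of-one as input-of-next); hence one full trace per orbit determines the remaining rows by iterating the same multiplication, up to relabeling of the tails, so only two genuinely independent computations are needed.

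The computations are routine; the care lies in two places. The combinatorial heart is the completeness of the case list, which is exactly the clean trichotomy above and is the only place where one must argue rather than compute. The second, more error-prone point is tracking the cyclic order scrupulously enough that each output places the tail blocks $v^i,v^j,v^l$ in the exact positions claimed; this is where a slip is most likely, though it is mechanical. Degenerate tails ($m_\bullet=0$) merely shorten the cycle words and change nothing, so they pose no real obstacle.
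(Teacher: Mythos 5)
Your proposal is correct, and its computational engine is the same as the paper's: specialize Lemma~\ref{2lemx1} to $h=(i,j,j+1,l)$ and trace orbits, using that $\pi$ and $\pi^h$ agree off $\{s_{i-1},s_j,s_l\}$. The organization, however, genuinely differs. The paper traces Cases~$1$ and~$2$ explicitly and declares the remaining four ``analogous''; it never argues that the six configurations are exhaustive. You do: the distribution of $\{s_{i-1},s_j,s_l\}$ among the cycles of $\pi$ is either three singletons, one triple (with two possible cyclic orders), or a pair plus a singleton (three choices of pair), giving $1+2+3=6$ mutually exclusive cases --- this is the one non-computational point in the lemma, and it is only implicit in the paper. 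Your identity $\pi^h=\pi\circ(s_{i-1}~s_j~s_l)$ is a clean algebraic repackaging of Lemma~\ref{2lemx1}, with the bonus that the parity statement of Lemma~\ref{2lem3} becomes immediate (multiplication by an even permutation). Finally, your orbit reduction $1\to3\to2\to1$, $4\to5\to6\to4$ is more systematic than the paper's selection: Cases~$1$ and~$2$ lie in the \emph{same} orbit, so the paper's worked computations never touch the configurations $\{4,5,6\}$, whereas your one-trace-per-orbit scheme covers the whole table and the relation $(s_{i-1}~s_j~s_l)^3=\mathrm{id}$ supplies a built-in consistency check that an appeal to analogy does not. The orbit shortcut does rest on the observation, which you make, that each trace is generic in the tails and hence transfers after relabeling.
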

\begin{proof} We shall only prove Case~$1$ and Case~$2$, the remaining four cases
can be shown analogously.
For Case~$1$, the $\pi$-cycles containing $s_{i-1}$,~$s_j$,~$s_l$ are
$$
(s_{i-1} ~v_1^i ~\ldots ~v_{m_i}^i),  (s_j ~v_1^j ~\ldots ~v_{m_j}^j), (s_l ~v_1^l ~\ldots ~v_{m_l}^l).
$$
Lemma~\ref{2lemx1} allows us to identify the new cycle structure by inspecting the critical
points $s_{i-1}$, $s_j$ and $s_l$.
Here we observe that all three cycles merge and form a single $\pi^h$-cycle
\begin{eqnarray*}
(s_{i-1} ~\pi^h(s_{i-1}) ~(\pi^h)^2(s_{i-1}) ~\ldots)&=&(s_{i-1} ~\pi(s_j) ~\pi^2(s_j) ~\ldots )\\
&=& (s_{i-1} ~v_1^j ~\ldots ~v_{m_j}^j ~s_{j}  ~v_1^l ~\ldots ~v_{m_l}^l ~s_l ~v_1^i ~\ldots ~v_{m_i}^i).
\end{eqnarray*}
For Case $2$, the $\pi$-cycle containing $s_{i-1}$,~$s_j$,~$s_l$ is
$$
(s_{i-1} ~v_1^i ~\ldots ~v_{m_i}^i  ~s_l ~v_1^l ~\ldots ~v_{m_l}^l ~s_j ~v_1^j ~\ldots ~v_{m_j}^j).
$$
We compute the $\pi^h$-cycles containing $s_{i-1}$, $s_j$ and $s_l$ in $\pi^h$ as
\begin{eqnarray*}
	(s_j ~\pi^h(s_j) ~(\pi^h)^2(s_j) ~\ldots)&=&(s_j ~\pi(s_{l}) ~\pi^2(s_{l}) ~\ldots)=(s_j ~v_1^l ~\ldots ~v_{m_l}^l)\\
	(s_l ~\pi^h(s_l) ~(\pi^h)^2(s_l) ~\ldots)&=&(s_l ~\pi(s_{i-1}) ~\pi^2(s_{i-1}) ~\ldots)=(s_l ~v_1^i ~\ldots ~v_{m_i}^i)\\
(s_{i-1} ~\pi^h(s_{i-1}) ~(\pi^h)^2(s_{i-1}) ~\ldots)&=&(s_{i-1} ~\pi(s_{j})
~\pi^2(s_{j}) ~\ldots)=(s_{i-1} ~v_1^j ~\ldots ~v_{m_j}^j)
\end{eqnarray*}
whence the lemma.
\qed
\end{proof}

If we wish to express which cycles are impacted by a transpose of scenario $k$ acting on a plane
permutation, we shall say ``the cycles are acted upon by a Case~$k$ transpose''.

We next observe
\begin{lemma}\label{2lem3}
Let $\mathfrak{p}^h=\chi_h \circ \mathfrak{p}$ where $\chi_h$ is a transpose.
Then the difference of the number of cycles of $\mathfrak{p}$ and $\mathfrak{p}^h$ is even.
Furthermore the difference of the number of cycles, odd cycles, even cycles between
$\mathfrak{p}$ and $\mathfrak{p}^h$ is contained in $\{-2,0,2\}$.
\end{lemma}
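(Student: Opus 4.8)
The plan is to exploit the fact that a transpose perturbs $\pi$ at only three points, and then to combine a locality bound with two parity invariants rather than grinding through the six scenarios of Lemma~\ref{2lem4} one statistic at a time. First I would record that, since $\chi_h$ with $h=(i,j,j+1,l)$ is a transpose, Lemma~\ref{2lemx1} gives $\pi^h(s_r)=\pi(s_r)$ for all $r\notin\{i-1,j,l\}$, together with $\pi^h(s_{i-1})=\pi(s_j)$, $\pi^h(s_j)=\pi(s_l)$, $\pi^h(s_l)=\pi(s_{i-1})$. The three indices $i-1,j,l$ are distinct (from $i\le j<j+1\le l$ and $i\ge1$), so $s_{i-1},s_j,s_l$ are three distinct letters, and one reads off
$$
\pi^h=\pi\,(s_{i-1}~s_j~s_l),
$$
i.e.\ $\pi^h$ is obtained from $\pi$ by right-multiplication with a $3$-cycle.

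Next I would establish the evenness of all three differences by parity alone, with no case analysis. Writing $\sigma=(s_{i-1}~s_j~s_l)$, the permutation $\sigma$ is even, so $\mathrm{sgn}(\pi^h)=\mathrm{sgn}(\pi)$; since $\mathrm{sgn}(\tau)=(-1)^{n-C(\tau)}$, this forces $C(\pi)\equiv C(\pi^h)\pmod 2$, which is the first assertion. For the odd cycles I would use the elementary invariant that, for any permutation of $[n]$, the number of odd-length cycles is congruent to $n$ modulo $2$ (the cycle lengths sum to $n$, and each even cycle contributes $0$ mod $2$). Applying this to both $\pi$ and $\pi^h$ shows the number of odd cycles changes by an even amount; the number of even cycles then changes by $\bigl(C(\pi^h)-C(\pi)\bigr)$ minus this even amount, hence also by an even number.

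Finally I would bound each difference by locality. Let $U$ be the union of the $\pi$-cycles meeting $\{s_{i-1},s_j,s_l\}$. Since $\sigma$ permutes these three letters among themselves and fixes everything else, $U$ is invariant under both $\pi$ and $\pi^h=\pi\sigma$, and $\pi,\pi^h$ act identically on $[n]\setminus U$. Thus every cycle outside $U$---and its length-parity---is unchanged, so all three differences are governed by the at most three cycles inside $U$ before and the at most three inside $U$ after. Consequently each of the three differences lies in $\{-3,\dots,3\}$, and combined with the evenness just established, each lies in $\{-2,0,2\}$. The main friction point is exactly the odd/even refinement: a direct attack would require tracking the parities of the block lengths $m_i+1,m_j+1,m_l+1$ through all six scenarios of Lemma~\ref{2lem4} and their sub-cases, whereas the invariant ``$\#\{\text{odd cycles}\}\equiv n$'' dispatches it in one line. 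As a sanity check I would confirm against Lemma~\ref{2lem4} that Cases~$1$ and~$2$ realize $\pm2$ for the total count while Cases~$3$--$6$ give $0$, and that all of $\{-2,0,2\}$ genuinely occur for the odd and even counts.
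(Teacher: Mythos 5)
Your proof is correct, but it takes a genuinely different route from the paper's. The paper argues in two lines from the case table of Lemma~\ref{2lem4}: the six scenarios show directly that the cycle count changes by $-2$, $0$ or $2$; for odd cycles it observes that the total number of elements in the affected cycles is preserved, so the number of odd cycles among them has constant parity; even cycles follow by subtraction. You instead bypass the table entirely and work from Lemma~\ref{2lemx1} alone: the identity $\pi^h=\pi\,(s_{i-1}~s_j~s_l)$ turns the parity of the cycle-count difference into a one-line sign computation, and the global invariant that the number of odd cycles of any permutation of $[n]$ is congruent to $n$ modulo $2$ makes the odd-cycle parity immediate --- indeed your argument shows the odd-cycle difference is even for \emph{any} two permutations of $[n]$, not just those related by a transpose. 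Your route is more conceptual, and the sign argument extends verbatim to block-interchanges, where Lemma~\ref{2lemx1} gives $\pi^h=\pi\,(s_{i-1}~s_{k-1})(s_j~s_l)$, again an even multiplier; this is relevant to Lemma~\ref{2lem5}. What the paper's route buys is economy: Lemma~\ref{2lem4} is already established and reused elsewhere, so the containment in $\{-2,0,2\}$ comes for free, whereas you must supply a separate bound.

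On that bound, one step of yours deserves an explicit line: you assert there are at most three $\pi^h$-cycles inside $U$, but only the corresponding bound for $\pi$ is immediate from the definition of $U$. The justification: any $\pi^h$-cycle contained in $U$ and disjoint from $\{s_{i-1},s_j,s_l\}$ would satisfy $\pi^h=\pi$ pointwise on it, hence be a $\pi$-cycle contained in $U$ avoiding all three letters --- impossible, since $U$ is by definition a union of $\pi$-cycles each containing one of those letters. So every $\pi^h$-cycle inside $U$ meets the three letters, and there are at most three of them; equivalently, note $\pi=\pi^h\sigma^{-1}$ and run your construction of $U$ symmetrically starting from $\pi^h$. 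With that line added, your proof is complete.
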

\begin{proof}
Lemma~\ref{2lem4} implies that the difference of the numbers of cycles of $\pi$ and $\pi^h$
is even.
As for the statement about odd cycles, since the parity of the total number of elements
contained in the cycles containing $s_{i-1}$, $s_{j}$ and $s_l$ is preserved, the difference of the
number of odd cycles is even. Consequently, the difference of the number of even cycles is also
even whence the lemma.
\qed
\end{proof}

Suppose we are given $h=(i,j,k,l)$, where $j+1<k$. Then using the strategy of the proof
of Lemma~\ref{2lem4}, we have
\begin{lemma}\label{2lem5}
Let $(s^h, \pi^h)=\chi_h \circ (s, \pi)$, where $h=(i,j,k,l)$ and $j+1<k$.
Then, the difference of the numbers of $\pi$-cycles and $\pi^h$-cycles is contained in
$\{-2,0,2\}$. Furthermore, the scenarios, where the number of $\pi^h$-cycles increases by $2$, are
given by:
\begin{center}
\begin{tabular}{|c|c|c|}
\hline
Case~$a$ &$\pi$& $(s_{i-1} ~v_1^i ~\ldots ~v_{m_i}^i ~s_j ~v_1^j ~\ldots ~v_{m_j}^j ~s_{l} ~v_1^l ~\ldots ~v_{m_l}^l ~s_{k-1} ~v_1^k ~\ldots ~v_{m_k}^k)$\\\cline{2-3}
&$\pi^h$&$(s_{i-1} ~v_1^k ~\ldots ~v_{m_k}^k) (s_j ~v_1^l ~\ldots ~v_{m_l}^l ~s_{k-1} ~v_1^i ~\ldots ~v_{m_i}^i) (s_l ~v_1^j ~\ldots ~v_{m_j}^j) $\\
\hline\hline
Case~$b$ &$\pi$& $(s_{i-1} ~v_1^i ~\ldots ~v_{m_i}^i ~s_{k-1}  ~v_1^k ~\ldots ~v_{m_k}^k ~s_{j}  ~v_1^j ~\ldots ~v_{m_j}^j ~s_l ~v_1^l ~\ldots ~v_{m_l}^l)$\\\cline{2-3}
&$\pi^h$&$(s_{i-1} ~v_1^k ~\ldots ~v_{m_k}^k ~s_j ~v_1^l ~\ldots ~v_{m_l}^l)  (s_{k-1} ~v_1^i ~\ldots ~v_{m_i}^i) (s_l ~v_1^j ~\ldots ~v_{m_j}^j) $\\
\hline\hline
Case~$c$ &$\pi$& $(s_{i-1} ~v_1^i ~\ldots ~v_{m_i}^i ~s_{k-1}  ~v_1^k ~\ldots ~v_{m_k}^k ~s_{l}  ~v_1^l  ~\ldots ~v_{m_l}^l ~s_j  ~v_1^j  ~\ldots  ~v_{m_j}^j)$\\\cline{2-3}
&$\pi^h$&$(s_{i-1}  ~v_1^k  ~\ldots ~v_{m_k}^k  ~s_l ~v_1^j  ~\ldots v_{m_j}^j)  (s_{k-1} ~v_1^i ~\ldots ~v_{m_i}^i)  (s_j  ~v_1^l ~\ldots ~v_{m_l}^l) $\\
\hline\hline
Case~$d$ &$\pi$& $(s_{i-1}  ~v_1^i  ~\ldots ~v_{m_i}^i  ~s_l  ~v_1^l  ~\ldots ~v_{m_l}^l ~s_{j}  ~v_1^j  ~\ldots ~v_{m_j}^j ~s_{k-1}  ~v_1^k ~\ldots  ~v_{m_k}^k)$\\\cline{2-3}
&$\pi^h$&$(s_{i-1} ~v_1^k  ~\ldots ~v_{m_k}^k)  (s_j  ~v_1^l  ~\ldots ~v_{m_l}^l) (s_{k-1} ~v_1^i  ~\ldots ~v_{m_i}^i  ~s_l ~v_1^j  ~\ldots ~v_{m_j}^j) $\\
\hline\hline
Case~$e$ &$\pi$& $(s_{i-1} ~v_1^i ~\ldots ~v_{m_i}^i ~s_{k-1} ~v_1^k  ~\ldots ~v_{m_k}^k)  (s_{j} ~v_1^j  ~\ldots ~v_{m_j}^j ~s_l ~v_1^l  ~\ldots ~v_{m_l}^l)$\\\cline{2-3}
&$\pi^h$&$(s_{i-1}  ~v_1^k  ~\ldots ~v_{m_k}^k) (s_j ~v_1^l  ~\ldots ~v_{m_l}^l)  (s_{k-1}   ~v_1^i  ~\ldots ~v_{m_i}^i)  (s_l ~v_1^j ~\ldots ~v_{m_j}^j) $\\
\hline
\end{tabular}
\end{center}

\end{lemma}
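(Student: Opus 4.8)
The plan is to reduce everything to the way $\pi^h$ differs from $\pi$ as recorded in Lemma~\ref{2lemx1}. For $h=(i,j,k,l)$ with $j+1<k$ the four ``critical'' points $s_{i-1},s_j,s_{k-1},s_l$ are distinct, and Lemma~\ref{2lemx1} says precisely that $\pi^h(s_{i-1})=\pi(s_{k-1})$, $\pi^h(s_{k-1})=\pi(s_{i-1})$, $\pi^h(s_j)=\pi(s_l)$, $\pi^h(s_l)=\pi(s_j)$, while $\pi^h$ and $\pi$ agree elsewhere. This is exactly the statement that $\pi^h=\pi\cdot(s_{i-1}\ s_{k-1})(s_j\ s_l)$, i.e.\ $\pi^h$ is obtained from $\pi$ by right-multiplication with a product of two disjoint transpositions $\tau_1=(s_{i-1}\ s_{k-1})$ and $\tau_2=(s_j\ s_l)$. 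Since multiplying a permutation by a single transposition either splits one cycle into two or merges two cycles into one, each of the two multiplications changes $C$ by exactly $\pm1$; hence $C(\pi^h)-C(\pi)\in\{-2,0,2\}$, which proves the first assertion. (Evenness also follows since $\tau_1\tau_2$ is even, so $\pi$ and $\pi^h$ have equal parity.)

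For the second assertion I would localize the analysis to the four critical points. Only the $\pi$-cycles through $s_{i-1},s_j,s_{k-1},s_l$ are affected, and the maximal $\pi$-arcs running from one critical point up to (but not including) the next are preserved intact by the passage to $\pi^h$; thus the whole effect is governed by the induced permutation $f$ on the four-element set $\{s_{i-1},s_j,s_{k-1},s_l\}$, where $f(x)$ is the first critical point met when iterating $\pi$ from $x$. Writing $f^h$ for the corresponding induced permutation of $\pi^h$, the displayed image identities give $f^h=f\,\tau_1\tau_2$, and since cycles of $\pi$ avoiding all four points are untouched one has $C(\pi^h)-C(\pi)=C(f^h)-C(f)$ as an identity in $S_4$. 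Tracing the four arcs through $f^h$, exactly as in the computation carried out in the proof of Lemma~\ref{2lem4}, then reconstructs the explicit cycle words displayed in the table.

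It therefore remains to decide, for $f\in S_4$, when $C(f\,\tau_1\tau_2)=C(f)+2$. Since $C\le 4$ on $S_4$, a gain of $2$ forces $C(f)\in\{1,2\}$. If $C(f)=2$ then $C(f^h)=4$ forces $f^h=\mathrm{id}$, so $f=\tau_1\tau_2=(s_{i-1}\ s_{k-1})(s_j\ s_l)$ is the unique possibility; this is the configuration of Case~$e$, in which $s_{i-1},s_{k-1}$ lie on one $\pi$-cycle and $s_j,s_l$ on another. If $C(f)=1$ then $f$ is one of the six $4$-cycles, and a direct check of $f\,\tau_1\tau_2$ for each shows that precisely four of them---those with cyclic orders $(s_{i-1}\,s_j\,s_l\,s_{k-1})$, $(s_{i-1}\,s_{k-1}\,s_j\,s_l)$, $(s_{i-1}\,s_{k-1}\,s_l\,s_j)$ and $(s_{i-1}\,s_l\,s_j\,s_{k-1})$---yield a transposition (hence $C(f^h)=3$), while the remaining two yield a $4$-cycle ($C(f^h)=1$, no gain). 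These four orders are exactly Cases~$a$--$d$, so the five tabulated scenarios are complete. I expect the only real bookkeeping, rather than genuine difficulty, to lie in checking that the arc-tracing reproduces the table entries verbatim and in confirming that the reduction to $f$ faithfully records the cycle-number change even when some of the blocks $v^i,v^j,v^k,v^l$ are empty.
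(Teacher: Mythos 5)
Your proposal is correct, and it takes a more structured route than the paper, which in fact prints no proof for this lemma at all: it simply asserts that the statement follows ``using the strategy of the proof of Lemma~\ref{2lem4}'', i.e.\ by tracing the affected cycles through the critical points via Lemma~\ref{2lemx1}, configuration by configuration. Your two refinements are both sound. First, reading Lemma~\ref{2lemx1} as the factorization $\pi^h=\pi\,(s_{i-1}\ s_{k-1})(s_j\ s_l)$ (the four points are indeed distinct, since $i-1<j<k-1<l$) makes the claim $C(\pi^h)-C(\pi)\in\{-2,0,2\}$ immediate from the fact that each disjoint transposition changes the cycle count by exactly $\pm 1$, whereas the paper's strategy only yields this by exhausting cases. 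Second, and more valuably, the reduction to the induced permutation $f$ on the four critical points, with $f^h=f\tau_1\tau_2$ and $C(\pi^h)-C(\pi)=C(f^h)-C(f)$, turns the completeness of the table into a finite check in the symmetric group on four letters; I verified it: writing $(1,2,3,4)$ for $(s_{i-1},s_j,s_{k-1},s_l)$ and $\sigma=(1\,3)(2\,4)$, the unique $f$ with $C(f)=2$ and $C(f\sigma)=4$ is $f=\sigma$, which is Case~$e$, and among the six $4$-cycles exactly $(1\,2\,4\,3)$, $(1\,3\,2\,4)$, $(1\,3\,4\,2)$, $(1\,4\,2\,3)$ satisfy $C(f\sigma)=3$, matching Cases~$a$--$d$, while $(1\,2\,3\,4)$ and $(1\,4\,3\,2)$ return a $4$-cycle and give no gain. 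This establishes transparently what the paper's table asserts but never argues, namely that Cases~$a$--$e$ exhaust the scenarios in which the cycle count increases by $2$. What the paper's tracing strategy buys instead is the explicit cycle words of $\pi^h$, which are needed downstream (Cases~$c$ and~$e$ are invoked in the proof of Theorem~\ref{5thm2}); recovering them in your setup is the same routine arc-tracing you describe, and it does reproduce the table verbatim, including when some of the blocks $v^i,v^j,v^k,v^l$ are empty, since an empty block merely means two critical points are $\pi$-consecutive and the reattachment argument is unaffected.
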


\begin{definition}
Two plane permutations $(s,\pi)$ and $(s',\pi')$ on $[n]$ are equivalent if there exists a
permutation $\alpha$ on $[n]$ such that
$$
s=\alpha s' \alpha^{-1}, \quad \pi=\alpha\pi'\alpha^{-1}.
$$
\end{definition}

\begin{lemma}\label{2lem8}
For two equivalent plane permutations $\mathfrak{p}=(s,\pi)$ and $\mathfrak{p}'=(s',\pi')$,
we have
\begin{eqnarray}
Exc(\mathfrak{p})=Exc(\mathfrak{p}').
\end{eqnarray}
\end{lemma}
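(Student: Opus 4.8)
The plan is to use the fact, already noted in the text before Lemma~\ref{2lem1}, that the exceedance count of a plane permutation does not depend on the cyclic representation chosen for $s$ in the top row. This freedom lets me align the two-row arrays of $\mathfrak{p}$ and $\mathfrak{p}'$ through the conjugating permutation $\alpha$, and then exhibit a position-preserving bijection between their exceedance sets.

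First I would write $s'=(s'_0~s'_1~\cdots~s'_{n-1})$ and observe that, since conjugation by $\alpha$ sends a cycle to the cycle obtained by applying $\alpha$ to each entry, the hypothesis $s=\alpha s'\alpha^{-1}$ yields $s=(\alpha(s'_0)~\alpha(s'_1)~\cdots~\alpha(s'_{n-1}))$ as an $n$-cycle. I would then choose precisely this cyclic representation for the top row of $\mathfrak{p}$, so that $s_i=\alpha(s'_i)$ for every $i$. Next, using $\pi=\alpha\pi'\alpha^{-1}$, I compute $\pi(s_i)=\alpha\pi'\alpha^{-1}(\alpha(s'_i))=\alpha(\pi'(s'_i))$, so the image of the $i$-th top entry under $\pi$ is exactly the $\alpha$-image of the corresponding entry for $\mathfrak{p}'$.

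The heart of the argument is the comparison of the two induced orders. Because $s_i=\alpha(s'_i)$ with the same index $i$, the map $s'_i\mapsto s_i$ preserves left-to-right position in the top row; hence $a<_{s'}b$ if and only if $\alpha(a)<_s\alpha(b)$, that is, $\alpha$ is an order isomorphism from $<_{s'}$ to $<_s$. Combining this with the previous step, $s'_i<_{s'}\pi'(s'_i)$ holds if and only if $\alpha(s'_i)<_s\alpha(\pi'(s'_i))$, i.e. if and only if $s_i<_s\pi(s_i)$. Thus $s'_i$ is an exceedance of $\mathfrak{p}'$ exactly when $s_i$ is an exceedance of $\mathfrak{p}$, so $s'_i\mapsto s_i$ restricts to a bijection between the two exceedance sets, giving $Exc(\mathfrak{p})=Exc(\mathfrak{p}')$ for this particular representation of $s$.

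The one point requiring care, and the only genuine obstacle, is that the order $<_s$, and hence the literal set of exceedances, depends on which cyclic shift of $s$ is placed in the top row, whereas the definition of equivalence fixes no such choice. This is resolved exactly by the cyclic-shift invariance of $Exc(\mathfrak{p})$ recorded before Lemma~\ref{2lem1}: once the equality of counts is established for the $\alpha$-aligned representation above, it holds for every representation, whence the lemma.
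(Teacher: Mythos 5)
Your proof is correct and follows essentially the same route as the paper's: conjugation by $\alpha$ acts as a relabeling, so $a<_{s'}b$ iff $\alpha(a)<_s\alpha(b)$, and exceedances of $\mathfrak{p}'$ correspond bijectively to exceedances of $\mathfrak{p}$. The only difference is that you make explicit the choice of the $\alpha$-aligned cyclic representation and invoke the cyclic-shift invariance of $Exc$, a subtlety the paper's two-line proof leaves implicit.
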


\begin{proof}
Assume
$
s=\alpha s' \alpha^{-1}, \pi=\alpha\pi'\alpha^{-1}
$
for some $\alpha$.
Since conjugation by $\alpha$ is equivalent to relabeling according to $\alpha$, $a<_{s'} b$
implies $\alpha(a)<_s \alpha(b)$. Therefore, an exceedance of $\mathfrak{p}'$ will uniquely
correspond to an exceedance of $\mathfrak{p}$, whence the lemma. \qed
\end{proof}

{
	
	Let $q^{\lambda}$ denote the number of permutations being of cycle-type $\lambda$.
	Given a permutation $\gamma$ with cycle-type $\lambda$, denote $W_{\mu,\eta}^{\lambda}$
	the number of different ways of writing $\gamma$ as a product of $\alpha$ and $\beta$, i.e.,
	$\gamma=\alpha \beta$, where $\alpha$ is of cycle-type $\mu$ and $\beta$ is of cycle-type $\eta$.
	Clearly, this number only depends on $\lambda$ instead of specific choice of $\gamma$.
	Also, we have:
	$$
	W_{\mu,\eta}^{\lambda}=W_{\eta,\mu}^{\lambda}, \quad
	q^{\lambda}W_{\mu,\eta}^{\lambda}= q^{\mu}W_{\lambda,\eta}^{\mu}.
	$$
	
	Let $U_{D}$ denote the set of plane permutations having
	$D$ as diagonals for some fixed permutation $D$ on $[n]$ of cycle-type $\lambda$.
	Note $\mathfrak{p}=(s,\pi)\in U_D$ iff $D=D_{\mathfrak{p}}=s\circ \pi^{-1}$.
	Then, the number $|U_D|$ enumerates the ways to write $D$ as a
	product of an $n$-cycle with another permutation.
	%
	%
	%
	Due to symmetry,
	$|U_D|$ is also certain multiple of the number
	of factorizations of $(1~2~\cdots ~n)$ into a permutation of cycle-type $\lambda$ and another permutation,
	i.e., rooted hypermaps having one face.
	A rooted hypermap is a triple of permutations $(\alpha,\beta_1,\beta_2)$,
	such that $\alpha=\beta_1\beta_2$. The cycles in $\alpha$ are called faces, the cycles in $\beta_1$
	are called (hyper)edges, and the cycles in $\beta_2$ are called vertices. If $\beta_1$ is
	an involution without fixed points, the rooted hypermap is an ordinary rooted map.
	We refer to~\cite{chap,walsh1, walsh2,lzv,hz,zv,IJ,jac,reidys,ChR1} and references therein for an
	in-depth study of hypermaps and maps.
	
	Plane permutations in two-row arrays can be viewed as a new way to represent one-face hypermaps.
	However, there are
	some advantages to deal with this new representation.
	As a quick application, we prove the cornerstone, i.e., the trisection lemma, in Chapuy~\cite{chap}
	where a new recurrence
	satisfied by the number of rooted one-face maps of genus $g$ was obtained.
	A rooted map with $n$ edges, or equivalently, a plane permutation
	$\mathfrak{p}=(s,\pi)$ on $[2n]$ such that $D_{\mathfrak{p}}$
	is an involution without fixed points, is of genus $g$, just means that $\pi$ has
	$n+1-2g$ cycles.
	In~\cite{chap}, the concepts of
	up-step, down-step and trisection of one-face maps were defined. These concepts are respectively
	the same as exceedance, anti-exceedance
	and NTAE of plane permutations whose diagonals are involutions without fixed points.
	Then, the trisection lemma can be restated as follows:
	\begin{lemma}[The trisection lemma~\cite{chap}]
		There are $2g$ NTAEs in a rooted one-face map with $n$ edges and genus $g$.
	\end{lemma}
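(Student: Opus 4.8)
The plan is to express the number of NTAEs purely in terms of the global invariants $AEx(\mathfrak{p})$ and $C(\pi)$, and then to evaluate these using Lemma~\ref{2lem1} together with the special structure that the map hypothesis imposes on the diagonal. The first step is to pin down how many anti-exceedances are trivial. Each $\mathfrak{p}$-cycle has a unique $<_s$-minimal element $m$, and its preimage $\pi^{-1}(m)$ lies in the same cycle and satisfies $\pi^{-1}(m)\ge_s m=\pi\bigl(\pi^{-1}(m)\bigr)$, so it is an anti-exceedance; this is precisely the trivial anti-exceedance attached to that cycle. Since distinct cycles have distinct minima and the associated preimages lie in distinct cycles, the trivial anti-exceedances are in bijection with the cycles of $\pi$. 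Hence there are exactly $C(\pi)$ of them, and the number of NTAEs equals $AEx(\mathfrak{p})-C(\pi)$. The whole problem thus reduces to computing $AEx(\mathfrak{p})$.

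First I would compute $AEx(\mathfrak{p})$ by routing through the diagonal. As $\mathfrak{p}$ is a plane permutation on $[2n]$, every element is either an exceedance or an anti-exceedance, so $AEx(\mathfrak{p})=2n-Exc(\mathfrak{p})$; combined with Lemma~\ref{2lem1} this gives $AEx(\mathfrak{p})=2n+1-AEx(D_{\mathfrak{p}})$. The decisive input from the map hypothesis is that $D_{\mathfrak{p}}$ is a fixed-point-free involution, i.e.\ a product of $n$ disjoint transpositions. For a transposition $(a~b)$ with $a<_s b$, the element $b$ is an anti-exceedance of $D_{\mathfrak{p}}$ (since $D_{\mathfrak{p}}(b)=a<_s b$) while $a$ is an exceedance; thus each $2$-cycle contributes exactly one $<_s$-anti-exceedance, yielding $AEx(D_{\mathfrak{p}})=n$. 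Substituting back gives $AEx(\mathfrak{p})=2n+1-n=n+1$.

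Finally I would combine the two counts. By the definition of genus recalled in the excerpt, a one-face map of genus $g$ corresponds to a $\pi$ with $C(\pi)=n+1-2g$ cycles. Therefore the number of NTAEs is
\begin{equation*}
AEx(\mathfrak{p})-C(\pi)=(n+1)-(n+1-2g)=2g,
\end{equation*}
which is exactly the assertion of the lemma.

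The computation itself is a one-line substitution, so the real content sits in the two counting claims, neither of which is computational. The point that warrants care is verifying that the trivial anti-exceedances number \emph{exactly} $C(\pi)$ — that each cycle contributes one, via its minimum, and that these preimages are genuinely distinct — and, symmetrically, that each $2$-cycle of the involution $D_{\mathfrak{p}}$ contributes \emph{precisely one} $<_s$-anti-exceedance. This latter step is where the orientation supplied by the order $<_s$ is indispensable: it is what breaks the symmetry within each transposition and forces the clean count $AEx(D_{\mathfrak{p}})=n$. Once these two bookkeeping facts are secured, the identity follows immediately from Lemma~\ref{2lem1}.
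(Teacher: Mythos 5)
Your proof is correct and follows essentially the same route as the paper: both count the anti-exceedances of the fixed-point-free involution $D_{\mathfrak{p}}$ as $n$ (one per $2$-cycle), apply Lemma~\ref{2lem1} to get $AEx(\mathfrak{p})=n+1$, and subtract the $C(\pi)=n+1-2g$ trivial anti-exceedances. Your explicit verification that the trivial anti-exceedances are in bijection with the cycles of $\pi$ is a careful spelling-out of what the paper treats as immediate from its definition, but it is not a different argument.
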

	This can be easily seen in the following way: given a rooted one-face map $\mathfrak{p}=(s,\pi)$,
	$D_{\mathfrak{p}}$ has always $n$ exceedances and $n$ anti-exceedances irrespective of $<_s$
	since it is an involution without fixed points. By Lemma~\ref{2lem1},
	$\mathfrak{p}$ has $n+1$ anti-exceedances.
	Therefore, $\mathfrak{p}$ has $(n+1)-(n+1-2g)=2g$ NTAEs since $\pi$ has $n+1-2g$ cycles.

	Next, we shall enumerate plane permutations in $U_D$
	having $k$ cycles and $a$ exceedances, where $D$ is a fixed permutation of cycle-type $\lambda$.
	\begin{lemma}\label{2lem6}
		Let $C_1$ and $C_2$ be two $\pi$-cycles of $(s,\pi)$ such that $\min\{C_1\}<_s\min\{C_2\}$.
		Suppose we have a Case~$2$ transpose on $C_2$, splitting $C_2$ into the three
		$\pi^h$-cycles $C_{21},C_{22},C_{23}$ in $(s^h,\pi^h)$. Then
		\begin{equation}
		\min\{C_1\}<_{s^h}\min\{\min\{C_{21}\},\min\{C_{22}\},\min\{C_{23}\}\}.
		\end{equation}
	\end{lemma}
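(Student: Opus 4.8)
The plan is to first collapse the threefold minimum on the right-hand side into a single set-minimum, and then to show that the block-transpose moves neither $\min\{C_1\}$ nor $\min\{C_2\}$, since both already lie in the portion of $s$ that the transpose leaves fixed. Write the four segments of $s$ on which $h=(i,j,j+1,l)$ acts as
$H=\{s_0,\dots,s_{i-1}\}$, $B_1=\{s_i,\dots,s_j\}$, $B_2=\{s_{j+1},\dots,s_l\}$ and $T=\{s_{l+1},\dots,s_{n-1}\}$. By the Case~$2$ description in Lemma~\ref{2lem4}, the cycles $C_{21},C_{22},C_{23}$ partition the set $C_2$, so for the total order $<_{s^h}$ one has
\[
\min\{\min\{C_{21}\},\min\{C_{22}\},\min\{C_{23}\}\}=\min\nolimits_{s^h}\{C_2\}.
\]
Hence it suffices to prove $\min\{C_1\}<_{s^h}\min\nolimits_{s^h}\{C_2\}$, the right-hand side now being the $<_{s^h}$-minimum of the whole set $C_2$.

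Next I would record the only feature of the transpose the argument actually uses. Passing from $s$ to $s^h$ fixes every element of $H$ and of $T$ pointwise and merely interchanges the two adjacent blocks $B_1,B_2$ (preserving each block's internal order). Consequently, in both $s$ and $s^h$ the set $H$ occupies exactly the positions $0,\dots,i-1$ and precedes every element of $B_1\cup B_2\cup T$; that is, $a<_s b$ and $a<_{s^h}b$ whenever $a\in H$ and $b\notin H$. In particular I would \emph{not} try to locate the scattered interior elements $v_\bullet^i,v_\bullet^l,v_\bullet^j$. Now $C_2$ contains $s_{i-1}\in H$, so $C_2\cap H\neq\varnothing$; since $H$ is fixed pointwise and precedes everything outside $H$ in both orders, the $<_s$-minimum and the $<_{s^h}$-minimum of $C_2$ are the very same element of $C_2\cap H$. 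Thus $\min\nolimits_{s}\{C_2\}=\min\nolimits_{s^h}\{C_2\}$, and in particular $\min\{C_2\}\le_s s_{i-1}$.

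Finally I would feed in the hypothesis. From $\min\{C_1\}<_s\min\{C_2\}\le_s s_{i-1}$ we learn that $\min\{C_1\}$ sits strictly before $s_{i-1}$, hence at a position $\le i-2$, i.e.\ inside $H$. (When $i=1$ this inequality is impossible, so the statement is vacuous there.) Being in $H$, this element keeps its position under the transpose and remains the minimum of $C_1$ in $<_{s^h}$, because every other element of $C_1$ is either in $H$ and thus still later, or outside $H$ and thus after all of $H$. Combining the two paragraphs, in $s^h$ the element $\min\{C_1\}$ sits at its unchanged head-position, strictly before the likewise-unmoved $\min\nolimits_{s^h}\{C_2\}$, giving $\min\{C_1\}<_{s^h}\min\nolimits_{s^h}\{C_2\}$, which is the claim.

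The one genuine subtlety — and the step I expect to need the most careful writing — is the bookkeeping that both relevant minima truly fall inside the invariant head block $H$; once that is pinned down, the equality $\min_s\{C_2\}=\min_{s^h}\{C_2\}$ and the persistence of $\min\{C_1\}$ are immediate, and no information about the precise locations of the interior cycle-elements is required. I would double-check the boundary count $\mathrm{pos}(s_{i-1})=i-1$ against the strict inequality $\min\{C_1\}<_s\min\{C_2\}$ to be certain it places $\min\{C_1\}$ at a position $\le i-2$, safely within $H$.
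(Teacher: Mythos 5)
Your proof is correct and takes essentially the same approach as the paper: the paper's own two-line argument observes that a Case~$2$ transpose on $C_2$ leaves $C_1$ unchanged and only disturbs the relative order of elements larger than $\min\{C_2\}$, which is precisely your observation that the head block $H$ containing both minima is fixed pointwise while only the blocks $B_1$ and $B_2$ are interchanged. Your write-up merely makes explicit the bookkeeping (the partition of $C_2$ into $C_{21},C_{22},C_{23}$, the persistence of both minima, and the vacuous case $i=1$) that the paper leaves implicit.
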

	\begin{proof} Note that any Case~$2$ transpose on $C_2$ will not change $C_1$.
		Furthermore, it will only impact the relative order of elements larger than $\min\{C_2\}$,
		whence the proof.\qed
	\end{proof}

	Let $Y_1$ denote the set of pairs $(\mathfrak{p},\epsilon)$, where
	$\mathfrak{p}\in U_D$ has $b$ cycles and $\epsilon$ is an NTAE
	in $\mathfrak{p}$.
	Let furthermore $Y_2$ denote the set of $\mathfrak{p'}\in U_D$
	in which there are $3$ labeled cycles among the total $b+2$ $\mathfrak{p'}$-cycles and
	finally let $Y_3$ denote the set of plane permutations $\mathfrak{p'}\in
	U_D$ where there are $3$ labeled cycles among the total $b+2$ $\mathfrak{p'}$-cycles
	and a distinguished NTAE contained in the labeled cycle that contains the largest minimal
	element.
	
	We will show $|Y_1|=|Y_2|+|Y_3|$ for any $D$ by establishing a bijection for plane permutations based on Case $1$ and Case $2$ of
	Lemma~\ref{2lem4}. This bijection is motivated by the gluing/slicing bijection of Chapuy~\cite{chap} for maps
	(i.e., $D$ is restricted to be an involution without fixed points).
	In fact, Case $1$ corresponds to the gluing operation and Case $2$ corresponds to the slicing
	operation.
	Our results extend those of~\cite{chap} to hypermaps as gluing/slicing can be employed
	irrespective of the cycle type of the diagonal.
%
	
	Therefore, based on a similar but simpler
	argument we have

	\begin{proposition}\label{2thm1}
		For any $D$, $ |Y_1|=|Y_2|+|Y_3|$.
	\end{proposition}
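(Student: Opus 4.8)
The plan is to establish the identity $|Y_1|=|Y_2|+|Y_3|$ by constructing an explicit bijection between $Y_1$ and $Y_2\sqcup Y_3$, realized through the Case~$1$ (gluing) and Case~$2$ (slicing) transposes of Lemma~\ref{2lem4}. The starting observation is that all three sets $Y_1$, $Y_2$, $Y_3$ consist of data attached to plane permutations in $U_D$, and that since a transpose $\chi_h$ preserves the diagonal $D_{\mathfrak{p}}$ by construction (recall $\pi^h=D_{\mathfrak{p}}^{-1}s^h$, so $D_{(s^h,\pi^h)}=s^h(\pi^h)^{-1}=D_{\mathfrak{p}}$), any transpose keeps us inside $U_D$. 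This is the structural fact that makes the bijection well-defined on the level of diagonals.

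First I would describe the forward map $Y_1\to Y_2\sqcup Y_3$. Given $(\mathfrak{p},\epsilon)\in Y_1$ with $\mathfrak{p}$ having $b$ cycles and $\epsilon$ a distinguished NTAE, the NTAE $\epsilon$ lives inside some $\pi$-cycle and pinpoints a pair of positions in the two-row array; these positions determine a Case~$2$ transpose $h$ that \emph{slices} that single cycle into three cycles $C_{21},C_{22},C_{23}$, yielding $\mathfrak{p}'=\chi_h\circ\mathfrak{p}\in U_D$ with $b+2$ cycles. I would label these three newly created cycles, producing an element of $Y_2$. The key structural input here is Lemma~\ref{2lem6}: because the slice only reorders elements exceeding $\min\{C_2\}$, the minimum of every untouched cycle keeps its relative order, so the three labeled cycles are exactly the ones produced by the slice, and we can recognize which element among the new minima is the largest. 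This recognition is what allows the inverse (a Case~$1$ gluing) to be unambiguous.

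Next I would build the inverse map $Y_2\sqcup Y_3\to Y_1$ by applying a Case~$1$ gluing transpose to the three labeled cycles, merging them back into one cycle and thereby decreasing the cycle count by $2$, from $b+2$ back to $b$. The gluing transpose is the exact inverse of the slicing transpose of Lemma~\ref{2lem4} (Case~$1$ and Case~$2$ are transposes of one another with the blocks $[s_i,s_j]$, $[s_k,s_l]$ swapped), and the NTAE $\epsilon$ is recovered from the position data of the glue. The subtlety requiring the split into $Y_2$ and $Y_3$ is the degenerate configuration: when the three labeled cycles can be glued in more than one way, or when the NTAE to be recovered coincides with a boundary case, the gluing produces an honest NTAE for $Y_2$ but needs the extra distinguished NTAE datum carried by $Y_3$ to resolve the ambiguity. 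I expect \textbf{this bookkeeping of degenerate gluings — matching each element of $Y_2$ and of $Y_3$ to a unique $(\mathfrak{p},\epsilon)\in Y_1$ and checking no pair is produced twice or omitted — to be the main obstacle}, precisely because it is where the Chapuy trisection argument becomes delicate. The cleanest route is to verify that the slice map sends $Y_1$ into $Y_2\sqcup Y_3$ and the glue map sends $Y_2\sqcup Y_3$ into $Y_1$, then check they are mutually inverse on each piece, using Lemma~\ref{2lem6} to guarantee that the ``largest minimal element'' labeling used to define $Y_3$ is preserved. Since this is asserted to follow ``based on a similar but simpler argument'' than Chapuy's, I would mirror his gluing/slicing correspondence but drop the involution hypothesis on $D$, noting that none of the order-preservation arguments actually used that hypothesis.
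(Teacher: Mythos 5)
Your high-level strategy --- slicing with a Case~2 transpose, gluing with a Case~1 transpose, staying inside $U_D$ because transposes preserve the diagonal --- is exactly the paper's, but the proposal omits the two ingredients that make it a proof rather than a plan. First, you never specify how $(\mathfrak{p},\epsilon)$ determines the Case~2 transpose. The paper writes the cycle containing $\epsilon$ as $(s_{i-1}~v_1^i~\ldots~v_{m_i}^i~s_l~v_1^l~\ldots~v_{m_l}^l~s_j~v_1^j~\ldots~v_{m_j}^j)$ with $s_{i-1}$ its minimum, $s_j=\pi(\epsilon)$, $v_{m_l}^l=\epsilon$, and $s_l$ chosen as the \emph{smallest} element of $\{v_1^i,\ldots,v_{m_i}^i,s_l,v_1^l,\ldots,v_{m_l}^l\}$ exceeding $s_j$; this specific choice is what forces $s_{i-1}$ and $s_l$ to be the minima of their new cycles after slicing, and what makes the canonical gluing (the Case~1 transpose at the three cycle minima) undo the slice. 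Without it, neither well-definedness nor mutual inverseness can be checked.

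Second, your criterion for the $Y_2$/$Y_3$ dichotomy is not the right one. Gluing three labeled cycles is never ambiguous --- it is the unique Case~1 transpose determined by their three minima --- so ``can be glued in more than one way'' never occurs, and the issue is not a boundary case. The actual dichotomy, after slicing, is: either $s_j=\pi(\epsilon)$ is the minimum of its new cycle $(s_j~v_1^l~\ldots~v_{m_l}^l)$, in which case the three labeled cycles alone determine $(\mathfrak{p},\epsilon)$ (gluing recovers $\epsilon$ as $u_{m_l}^l$, or as $s'_j$ when that segment is empty) and one obtains a $Y_2$-element; or $s_j$ is not the minimum, in which case $\epsilon=v_{m_l}^l$ \emph{persists as an NTAE} of $\mathfrak{p}^h$, lying in the labeled cycle with the largest minimum, and it must be carried along as an extra datum --- which is exactly why $Y_3$ is defined as it is. This dichotomy is what produces the count $|Y_1|=|Y_2|+|Y_3|$: a fixed $3$-labeled plane permutation in $U_D$ is the slice-image of one scenario-1 pair plus one scenario-2 pair for each NTAE in its largest-minimum labeled cycle. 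A minor further point: Lemma~\ref{2lem6} plays no role in this proposition; the paper invokes it only afterwards, to iterate the bijection in Theorem~\ref{2thm2}. What is needed here are order-preservation facts internal to the sliced cycle (e.g., that all of $v_1^i,\ldots,v_{m_i}^i$ become larger than $s_l$ in $s^h$), which must be --- and in the paper are --- verified directly.
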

	\begin{proof}
		Given $(\mathfrak{p},\epsilon)\in Y_1$ where $\mathfrak{p}=(s,\pi)$. We consider the NTAE
		$\epsilon$ and identify a Case~$2$ transpose $\chi_{h}$,~$h=(i,j,j+1,l)$ as follows:
		assume $\epsilon$ is contained in the cycle
		$$
		C=(s_{i-1} ~v_1^i ~\ldots ~v_{m_i}^i ~s_l ~v_1^l ~\ldots ~v_{m_l}^l ~s_j ~v_1^j ~\ldots ~v_{m_j}^j),
		$$
		where $s_{i-1}=\min\{C\}$, $v_{m_l}^l=\epsilon$,~$s_{j}=\pi(\epsilon)$ and $s_l$ has the
		property that $s_l$ is the smallest in $\{v_1^i,\ldots v_{m_i}^i, s_l,v_1^l,\ldots v_{m_l}^l\}$ such that $s_{j}<_s s_{l}$. Such an element exists by construction
		and we have $s_{i-1}<_s s_j<_s s_l \leq_s \epsilon$.
		
		Let $\mathfrak{p}^h=(s^h,\pi^h)=\chi_h\circ \mathfrak{p}$, we have
		\begin{eqnarray*}
			(s,\pi)=\left(
			\vcenter{\xymatrix@C=0pc@R=1pc{
					\cdots & s_{i-1}\ar@{->>}[d]  & s_i\ar@{--}[dl] &\cdots & s_j\ar@{--}[dl]\ar@{->>}[d] &
					s_{j+1}\ar@{--}[dl] &\cdots & s_{l}\ar@{--}[dl]\ar@{->>}[d] &  \cdots &\epsilon &\cdots \\
					\cdots & \pi(s_{i-1}) & \pi(s_i) & \cdots & \pi(s_j) & \pi(s_{j+1}) &\cdots  &\pi(s_{l}) &\cdots & \pi(\epsilon) &\cdots
				}}\right),\\
				(s^h,\pi^h)=\left(
				\vcenter{\xymatrix@C=0pc@R=1pc{
						\cdots & s_{i-1}\ar@{->>}[d]  & s_{j+1}\ar@{--}[dl] &\cdots & s_l\ar@{--}[dl]\ar@{->>}[d] &
						s_{i}\ar@{--}[dl] &\cdots & s_{j}\ar@{--}[dl]\ar@{->>}[d] &  \cdots &\epsilon &\cdots \\
						\cdots & \pi(s_{j}) & \pi(s_{j+1}) & \cdots & \pi(s_{i-1}) & \pi(s_{i}) &\cdots  &\pi(s_{l}) &\cdots & \pi(\epsilon) &\cdots
					}}
					\right).
				\end{eqnarray*}
				Then,
				$s_{i-1}<_{s^h}s_{l}<_{s^h}s_{j}$. According to Lemma~\ref{2lem4}, $s_{i-1}$,~$s_{j}$,~$s_l$
				will be contained in three distinct
				cycles of $\pi^h$, namely
				$$
				(s_{i-1}  ~v_1^j ~\ldots ~v_{m_j}^j), \quad
				(s_{j} ~v_1^l ~\ldots ~v_{m_l}^l), \quad
				(s_l ~v_1^i ~\ldots ~v_{m_i}^i).
				$$
				It is clear that $s_{{i-1}}$ is still the minimum element w.r.t. $<_{s^h}$ in its cycle.
				By construction we have
				$$
				\{v_1^i, \ldots v_{m_i}^i \}\subset\; ]s_{i-1},s_{j}[\; \cup\; ]s_l,s_{n}] \quad \text{\rm and}
				\quad  \{v_1^l, \ldots v_{m_l}^l\}\subset\; ]s_{i-1},s_j[ \; \cup\; ]s_l,s_{n}]
				$$
				in $s$.
				After transposing $[s_i,s_j]$ and $[s_{j+1},s_l]$, all elements contained in $]s_{i-1},s_{j}[$
				will be larger than $s_l$ in $s^h$ and all elements of $]s_l,s_n]$ remain in $s^h$ to be
				larger than $s_l$. This implies that all elements in the segment $v_1^i  \ldots v_{m_i}^i$
				will be larger than $s_l$ in $s^h$. Accordingly, $s_{l}$ is the minimum element in the cycle
				$(s_l  ~v_1^i ~\ldots ~v_{m_i}^i)$.
				
				It remains to inspect $(s_{j} ~v_1^l ~\ldots ~v_{m_l}^l)$. We find two scenarios:
				\begin{itemize}
					\item[1.] If $s_{j}$ is the minimum (w.r.t.~$<_{s^h}$), then $v_1^l  \ldots v_{m_l}^l$
					contains no element of $]s_{i-1},s_{j}[$ in $s$.
					We claim that in this case there is a bijection between the pairs $(\mathfrak{p},\epsilon)$
					and the set $Y_2$. It suffices to specify the inverse: given an $Y_2$-element,
					$\mathfrak{p'}=(s',\pi')$ with three labeled cycles $(s'_{i-1} ~u_1^i ~\ldots ~u_{m_i}^i)$,
					$(s'_j  ~u_1^j ~\ldots ~u_{m_j}^j)$ and $(s'_l ~u_1^l ~\ldots ~u_{m_l}^l)$ we consider a
					Case~$1$ transpose determined by the three minimum elements, $s'_{i-1}<_{s'} s'_j<_{s'} s'_l$
					in the respective three cycles. This generates a plane permutation $(s,\pi)$
					together with a distinguished NTAE, $\epsilon$, obtained as follows:
					after transposing, the three cycles merge into
					$$
					C=(s'_{i-1} ~u_1^j ~\ldots ~u_{m_j}^j ~s'_j ~u_1^l ~\ldots ~u_{m_l}^l ~s'_l ~u_1^i ~\ldots ~u_{m_i}^i),
					$$
					where $s'_{i-1}<_{s}s'_{l}<_{s}s'_j$. Since elements contained in $u_1^l \ldots u_{m_l}^l$ are
					by construction larger than $s'_{l}$ w.r.t. $<_{s'}$ and these elements will not be moved by
					the transpose, $u_{m_l}^l>_{s} s'_{l}$, i.e., $\epsilon=u_{m_l}^l$ is the NTAE. In case of
					$\{u_1^l, \ldots u_{m_l}^l\}=\varnothing$ we have $\epsilon=s'_j$. The following diagram
					illustrates the situation
					\begin{eqnarray*}
						\xymatrix@R=1pc{
							s_{i-1}<s_j<s_l\leq \epsilon =v_{m_l}^l \ar[d] &&& s'_{i-1}<s'_l<s'_j\leq \epsilon =u_{m_l}^l\ar^{s_{i-1}=s'_{i-1},s_l=s'_j}_{s_j=s'_l}[lll]\\
							(s_{i-1} ~\cdots_{v^i} ~s_l ~\cdots_{v^l} ~s_j ~\cdots_{v^j})\ar[dd]|{Case~ 2} &&& (s'_{i-1} ~\cdots_{u^j} ~s'_j ~\cdots_{u^l} ~s'_l ~\cdots_{u^i})\ar^{v^i=u^j, v^l=u^l}_{v^j=u^i}[lll]\ar[u]\\
							&&&\\
							(s_{i-1} ~\cdots_{v^j})(s_j ~\cdots_{v^l})(s_l ~\cdots_{v^i})\ar[d]\ar^{v^i=u^j, v^l=u^l}_{v^j=u^i}[rrr] &&& (s'_{i-1} ~\cdots_{u^i})(s'_j ~\cdots_{u^j})(s'_l ~\cdots_{u^l})\ar[uu]|{Case~ 1}\\
							s_{i-1}<s_l<s_j \ar^{s_{i-1}=s'_{i-1},s_l=s'_j}_{s_j=s'_l}[rrr] &&&  s'_{i-1}<s'_j<s'_l \ar[u]
						}
					\end{eqnarray*}
					where $\cdots_{v^i}$ denotes the sequence $v_1^i \ldots v_{m_i}^i$.

					\item[2.] If $s_{j}$ is not the minimum, then $\{v_1^l, \ldots v_{m_l}^l \}\neq \varnothing$
					and $\epsilon=v_{m_l}^l$. Since by construction, $\epsilon\in ]s_l,s_n]$ in $s$, it will
					not be impacted by the transposition and we have $s_{j}<_{s^h}\epsilon$. Therefore,
					$\epsilon$ persists to be a NTAE in $\mathfrak{p}^h$.
					We furthermore observe
					$$
					\epsilon>_{s^h}s_{j}>_{s^h}\min\{s_j,v_1^l, \ldots v_{m_l}^l\}>_{s^h}s_l>_{s^h}s_{i-1},
					$$
					where $\min\{s_j,v_1^l, \ldots v_{m_l}^l\}>_{s^h}s_l$ due to the fact that, after transposing $[s_i,s_j]$ and $[s_{j+1},s_l]$, all elements in $\{v_1^l, \ldots v_{m_l}^l\}\subset\; ]s_{i-1},s_j[ \; \cup\; ]s_l,s_{n}]$
					will be larger than $s_l$ following $<_{s^h}$.
					We claim that there is a bijection between such pairs $(\mathfrak{p},\epsilon)$
					and the set $Y_3$. To this end we specify its inverse: given an element in $Y_3$,
					$\mathfrak{p'}=(s',\pi')$ with three labeled cycles
					$$
					(s'_{i-1} ~u_1^i ~\ldots ~u_{m_i}^i), \quad
					(s'_{j}  ~u_1^j ~\ldots ~u_{m_j}^j),\quad
					(s'_l ~u_1^l ~\ldots ~u_{m_l}^l),
					$$
					where $\epsilon=u_{m_l}^l$ is the distinguished NTAE.
					Then a Case~$1$ transpose w.r.t.~the two minima $s'_{i-1}$ and $s'_j$, and $s'_{l}$
					generates a plane permutation, $\mathfrak{p}$, in which $\epsilon$ remains
					as a distinguished NTAE.
				\end{itemize}
				This completes the proof of the proposition.\qed
			\end{proof}
			
			\begin{example}
				Here we look at an example to illustrate the bijection. Consider the plane permutation with $2$ cycles:
				\begin{equation*}
				\mathfrak{p}=\left(\begin{array}{cccccccc}
				3 & 5& 1 & 4 & 8 & 7 & 2& 6\\
				8 & 6& 3 & 5 & 4 & 2 & 7& 1
				\end{array}\right), \quad\text{where}\quad \pi=(3~8~4~5~6~1)(2~7)  .
				\end{equation*}
				Clearly, both $8$ and $6$ are NTAEs. For $(\mathfrak{p},8)$, we find $3,~4,~8$ to determine a Case~$2$ transpose. After the transpose, we obtain
				\begin{equation*}
				\mathfrak{p'}=\left(\begin{array}{cccccccc}
				3 & {\bf 8}&  5& 1 & 4 &  7 & 2& 6\\
				{\bf 5} & 8 & 6& 3 & 4 & 2 & 7& 1
				\end{array}\right), \quad\text{where}\quad \pi'=(3~5~6~1)({\bf 8})(4)(2~7),
				\end{equation*}
				and that $3,~4,~8$ are all the minimum elements in their respective cycles in $\pi'$, i.e., scenario~$1$.
				For the pair $(\mathfrak{p},6)$, we find $3,~1$ and $4$ (the smallest in $\{8,4,5,6\}$ which is larger than $1$) to determine a Case~$2$ transpose. After the transpose, we obtain
				\begin{equation*}
				\mathfrak{p'}=\left(\begin{array}{cccccccc}
				3 & {\bf 4}&  5& 1 & 8 &  7 & 2& 6\\
				{\bf 3} & 8 & 6& 5 & 4 & 2 & 7& 1
				\end{array}\right), \quad\text{where}\quad \pi'=(3)(4~8)(5~{\bf 6}~1)(2~7),
				\end{equation*}
				and that $3,~4$ are the minimum elements in their respective cycles in $\pi'$.
				However, the NTAE $6$ remains as an NTAE. This NTAE needs to be distinguished for the purpose of constructing the reverse map of the bijection. 
				
			\end{example}
			
			Combining Lemma~\ref{2lem6} and Proposition~\ref{2thm1}, we can conclude
			that each plane permutation in $U_D$ with $k$ cycles and a distinguished NTAE
			is in one-to-one correspondence with a plane permutation in $U_D$ having $2i+1$
			labeled cycles among its total $k+2i$ cycles for some $i>0$.

			\begin{theorem}\label{2thm2}
				Let $p_{k}^{\lambda}(n)$ denote the number of $\mathfrak{p}\in U_D$ having
				$k$ cycles where $D$ is of cycle-type $\lambda$.
				Let $p_{a,k}^{\lambda}(n)$ denote the number of $\mathfrak{p}\in U_D$, where
				$\mathfrak{p}$ has $k$ cycles, $Exc(\mathfrak{p})=a$ and $D$ is of type
				$\lambda$. Then,
				\begin{equation}\label{3eq3}
				\sum_{a\geq 0}(n-a-k)p_{a,k}^{\lambda}(n)=
				\sum_{i\geq 1}{k+2i\choose k-1}p_{k+2i}^{\lambda}(n).
				\end{equation}
			\end{theorem}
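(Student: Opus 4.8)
The plan is to read both sides of \eqref{3eq3} as cardinalities of explicit families of decorated plane permutations in $U_D$, and then to identify these families using the iterated gluing/slicing bijection assembled from Lemma~\ref{2lem6} and Proposition~\ref{2thm1}. First I would reinterpret the left-hand side. If $\mathfrak{p}\in U_D$ has $k$ cycles and $Exc(\mathfrak{p})=a$, then $AEx(\mathfrak{p})=n-a$, and each of the $k$ cycles of $\pi$ supplies exactly one trivial anti-exceedance, namely $\pi^{-1}(m)$ where $m$ is its $<_s$-minimum. Hence $\mathfrak{p}$ has precisely $(n-a)-k=n-a-k$ NTAEs, so the summand $(n-a-k)p_{a,k}^{\lambda}(n)$ counts pairs $(\mathfrak{p},\epsilon)$ with $\mathfrak{p}\in U_D$ of $k$ cycles, $Exc(\mathfrak{p})=a$, and $\epsilon$ an NTAE of $\mathfrak{p}$. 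Summing over $a$, the left-hand side equals $|Y_1|$ with $b=k$, the number of all such pairs.

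Next I would reinterpret the right-hand side. Using the symmetry $\binom{k+2i}{k-1}=\binom{k+2i}{2i+1}$, the term $\binom{k+2i}{k-1}p_{k+2i}^{\lambda}(n)$ counts the plane permutations of $U_D$ with $k+2i$ cycles together with a choice of $2i+1$ of those cycles to be labeled, the individual roles of the labeled cycles being pinned down by the $<_s$-order of their minima. Thus the right-hand side counts, summed over $i\geq 1$, the plane permutations in $U_D$ carrying $2i+1$ labeled cycles among a total of $k+2i$ cycles and no further decoration.

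Finally I would equate the two counts by iterating Proposition~\ref{2thm1}. That proposition is exactly the first step $|Y_1|=|Y_2|+|Y_3|$, where $Y_2$ yields the terminal $i=1$ objects and $Y_3$ again carries a distinguished NTAE lodged in the labeled cycle of largest minimum. I would promote this to a recursion: let $A_j$ denote the plane permutations in $U_D$ with $k+2j$ cycles, $2j+1$ labeled cycles, and a distinguished NTAE in the labeled cycle of largest $<_s$-minimum, and let $B_i$ denote the same data with $2i+1$ labeled cycles among $k+2i$ cycles but no distinguished NTAE. Applying the Case~$2$ transpose at the distinguished NTAE splits its (largest-minimum) labeled cycle into three, raising both the total cycle count and the labeled-cycle count by $2$; Lemma~\ref{2lem6} guarantees that the remaining $2j$ labeled cycles keep strictly smaller minima, so the three new cycles occupy precisely the largest-minimum positions and the decorated structure is preserved. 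The two scenarios in the proof of Proposition~\ref{2thm1} then give $|A_j|=|B_{j+1}|+|A_{j+1}|$. Since $A_0=Y_1$ and no plane permutation on $[n]$ has more than $n$ cycles, the recursion telescopes and terminates, yielding $|Y_1|=\sum_{i\geq 1}|B_i|=\sum_{i\geq 1}\binom{k+2i}{2i+1}p_{k+2i}^{\lambda}(n)$, which is the right-hand side.

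The main obstacle is the bookkeeping that turns the single step of Proposition~\ref{2thm1} into the clean recursion $|A_j|=|B_{j+1}|+|A_{j+1}|$: one must verify that the Case~$2$ transpose applied at the distinguished NTAE enlarges the labeled set by exactly two and that ``the labeled cycle of largest minimum'' stays well-defined after each step — this is precisely the order-preservation supplied by Lemma~\ref{2lem6} — and that the reverse Case~$1$ gluing inverts it, so that each step is a genuine bijection rather than merely an inequality. Once this is in place, the theorem is immediate from the two counting interpretations.
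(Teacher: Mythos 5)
Your proposal is correct and follows essentially the same route as the paper: identify the left-hand side with $|Y_1|$ (counting pairs of a $k$-cycle plane permutation and an NTAE), identify each term $\binom{k+2i}{k-1}p_{k+2i}^{\lambda}(n)=\binom{k+2i}{2i+1}p_{k+2i}^{\lambda}(n)$ with plane permutations carrying $2i+1$ labeled cycles, and then iterate Proposition~\ref{2thm1} together with Lemma~\ref{2lem6} until the distinguished-NTAE class is exhausted. Your explicit recursion $|A_j|=|B_{j+1}|+|A_{j+1}|$ is just a cleaner bookkeeping of the paper's telescoping $|Y_1|=|Y_2|+|Y_3|=\binom{k+2}{3}p_{k+2}^{\lambda}(n)+\binom{k+4}{5}p_{k+4}^{\lambda}(n)+\cdots$, so the two arguments coincide.
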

			\begin{proof}
				Using the notation of Proposition~\ref{2thm1} and recursively applying Lemma~\ref{2lem6}
				as well as Proposition~\ref{2thm1}, we have
				\begin{eqnarray*}
					|Y_1|&=&\sum_{a\geq 0}(n-a-k)p_{a,k}^{\lambda}(n)\\
					&=&|Y_2|+|Y_3|={k+2\choose 3}p_{k+2}^{\lambda}(n)+|Y_3|\\
					&=&{k+2\choose 3}p_{k+2}^{\lambda}(n)+{k+4\choose 5}p_{k+4}^{\lambda}(n)+\cdots
				\end{eqnarray*}
				whence the theorem.\qed
			\end{proof}
			
			{\bf Remark.} Following from Proposition~\ref{2pro1}, the exact number of terms on the RHS of Eq.~\eqref{3eq3} depends on the number of parts in $\lambda$.

	We proceed to study Theorem~\ref{2thm2} in more detail.
	Based on a ``reflection principle"
	argument, we eventually clear the parameter $a$.

	Let $\mu,\eta$ be partitions of $n$. We write $\mu\rhd_{2i+1}\eta$ if $\mu$
	can be obtained by splitting one $\eta$-part into $(2i+1)$ non-zero parts. Let
	furthermore $\kappa_{\mu,\eta}$ denote the number of different ways to obtain
	$\eta$ from $\mu$ by merging $\ell(\mu)-\ell(\eta)+1$ $\mu$-parts into one, where
	$\ell(\mu)$ and $\ell(\eta)$ denote the number of blocks in the partitions $\mu$ and $\eta$,
	respectively.
	
	Let $U_{\lambda}^{\eta}$ denote the set of plane permutations, $\mathfrak{p}=(s,\pi)\in U_D$,
	where $D$ is a fixed permutation of cycle-type $\lambda$ and $\pi$ has cycle-type $\eta$.
	
	\begin{theorem}
		Let $f_{\eta,\lambda}(n)=\vert U_{\lambda}^{\eta} \vert$.
		For $\ell(\eta)+\ell(\lambda)< n+1$, we have
		\begin{equation}
		f_{\eta,\lambda}(n)=\frac{q^{\lambda} \sum_{i\geq 1}\sum_{\mu\rhd_{2i+1}\eta}
			\kappa_{\mu,\eta}f_{\mu,\lambda}(n)
			+q^{\eta} \sum_{i\geq 1}\sum_{\mu\rhd_{2i+1}\lambda}
			\kappa_{\mu,\lambda}f_{\mu,\eta}(n)}{q^{\lambda}[n+1-\ell(\eta)-\ell(\lambda)]}.
		\end{equation}
	\end{theorem}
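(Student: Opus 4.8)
The plan is to deduce the formula from two ingredients: a cycle-type refinement of Theorem~\ref{2thm2}, and a ``reflection'' identity that converts the exceedance-weighted sums appearing there into the symmetric pair $(\eta,\lambda)\leftrightarrow(\lambda,\eta)$. Throughout I fix a permutation $D$ of type $\lambda$ and write $f_{a,\eta,\lambda}(n)$ for the number of $\mathfrak{p}=(s,\pi)\in U_D$ with $\pi$ of type $\eta$ and $Exc(\mathfrak{p})=a$, and $e_{\eta,\lambda}=\sum_a a\,f_{a,\eta,\lambda}(n)$ for the total number of exceedances in this ensemble (well defined by Lemma~\ref{2lem8}).

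First I would refine Theorem~\ref{2thm2} by recording the full cycle-type of $\pi$ rather than only its number of cycles. Since $n-a-\ell(\eta)$ equals $AEx(\mathfrak{p})-C(\pi)$, i.e.\ the number of NTAEs of a $\mathfrak{p}$ with $Exc(\mathfrak{p})=a$ and $\pi$ of type $\eta$, the gluing/slicing bijection behind Proposition~\ref{2thm1}---which merges the $2i+1$ labelled cycles into a single cycle---tracks cycle types: a type-$\mu$ plane permutation with $\mu\rhd_{2i+1}\eta$ contributes once for each of the $\kappa_{\mu,\eta}$ selections of $2i+1$ of its cycles that glue back to $\eta$. Summing $\kappa_{\mu,\eta}$ over all admissible $\eta$ recovers the coefficient $\binom{\ell(\eta)+2i}{2i+1}=\binom{\ell(\eta)+2i}{\ell(\eta)-1}$ of Theorem~\ref{2thm2}, which is the consistency check I would run. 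This yields
\begin{equation*}
(n-\ell(\eta))f_{\eta,\lambda}(n)-e_{\eta,\lambda}
=\sum_{i\ge 1}\sum_{\mu\rhd_{2i+1}\eta}\kappa_{\mu,\eta}\,f_{\mu,\lambda}(n),
\end{equation*}
together with the same identity with the roles of $\eta$ and $\lambda$ exchanged, obtained by applying it in $U_{D'}$ for $D'$ of type $\eta$.

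The heart of the argument is the reflection identity. Let $T_{\eta,\lambda}$ be the set of \emph{all} plane permutations with vertical of type $\eta$ and diagonal of type $\lambda$, so that $|T_{\eta,\lambda}|=q^\lambda f_{\eta,\lambda}(n)$ and $\sum_{\mathfrak{p}\in T_{\eta,\lambda}}Exc(\mathfrak{p})=q^\lambda e_{\eta,\lambda}$ by Lemma~\ref{2lem8}. The map $(s,\pi)\mapsto(s,s\pi^{-1})$ is a bijection $T_{\eta,\lambda}\to T_{\lambda,\eta}$ that sends the diagonal exceedance number $Exc(D_{\mathfrak{p}})$ to the vertical exceedance number of the image; in particular it already gives the symmetry $q^\lambda f_{\eta,\lambda}(n)=q^\eta f_{\lambda,\eta}(n)$. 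Combining this bijection with the pointwise identity $Exc(\mathfrak{p})+Exc(D_{\mathfrak{p}})=n-1$ coming from Lemma~\ref{2lem1}, and summing over $T_{\eta,\lambda}$, produces
\begin{equation*}
q^\lambda e_{\eta,\lambda}+q^\eta e_{\lambda,\eta}=(n-1)\,q^\lambda f_{\eta,\lambda}(n).
\end{equation*}

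Finally I would solve the resulting linear system. Substituting the two refined recursions (which express $e_{\eta,\lambda}$ and $e_{\lambda,\eta}$) into the reflection identity and using $q^\eta f_{\lambda,\eta}(n)=q^\lambda f_{\eta,\lambda}(n)$ collapses every occurrence of $f_{\lambda,\eta}$, leaving
\begin{equation*}
\bigl(n+1-\ell(\eta)-\ell(\lambda)\bigr)f_{\eta,\lambda}(n)
=\sum_{i\ge1}\sum_{\mu\rhd_{2i+1}\eta}\kappa_{\mu,\eta}f_{\mu,\lambda}(n)
+\frac{q^\eta}{q^\lambda}\sum_{i\ge1}\sum_{\mu\rhd_{2i+1}\lambda}\kappa_{\mu,\lambda}f_{\mu,\eta}(n),
\end{equation*}
which is the claimed formula after clearing $q^\lambda$; the hypothesis $\ell(\eta)+\ell(\lambda)<n+1$ is exactly what makes the leading coefficient nonzero, so the division is legitimate. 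I expect the main obstacle to be the first step: verifying that the bijection of Proposition~\ref{2thm1} respects cycle types with precisely the multiplicities encoded by $\kappa_{\mu,\eta}$, rather than over- or under-counting when $\mu$ has repeated parts, since the exceedance bookkeeping in the reflection and solving steps is then essentially forced.
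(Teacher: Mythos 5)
Your proof is correct, and its skeleton coincides with the paper's: a cycle-type refinement of the gluing/slicing recursion applied to both ordered pairs $(\eta,\lambda)$ and $(\lambda,\eta)$ (these are exactly Eqs.~\eqref{2e7} and \eqref{2e8} in the paper's proof, established there just as you describe, with $\kappa_{\mu,\eta}$ counting the labelings of $2i+1$ cycles), then a reflection identity, then solving the resulting linear system using the symmetry $q^{\lambda}f_{\eta,\lambda}(n)=q^{\eta}f_{\lambda,\eta}(n)$. Where you genuinely differ is the reflection step. The paper proves the refined, per-exceedance-value identity $q^{\lambda} f_{\eta,\lambda}(n,a)=q^{\eta} f_{\lambda,\eta}(n,n-1-a)$ (Eq.~\eqref{2e9}) by taking inverses in $s=D\pi$, i.e.\ passing to the plane permutation $(s^{-1},D^{-1})$, and then tracking how exceedances transform when $<_s$ is replaced by the reversed order $<_{\hat{s}}$, which forces a case analysis around $s_0$ (fixed point or not). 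You instead keep the top row fixed and use the bijection $(s,\pi)\mapsto(s,D_{\mathfrak{p}})$ from your $T_{\eta,\lambda}$ to $T_{\lambda,\eta}$, combined with the pointwise complementarity $Exc(\mathfrak{p})+Exc(D_{\mathfrak{p}})=n-1$, which follows at once from Lemma~\ref{2lem1} together with $Exc(D_{\mathfrak{p}})+AEx(D_{\mathfrak{p}})=n$. This yields only the first-moment identity $q^{\lambda}e_{\eta,\lambda}+q^{\eta}e_{\lambda,\eta}=(n-1)q^{\lambda}f_{\eta,\lambda}(n)$ rather than the paper's distributional statement, but your final computation correctly shows that this weaker identity suffices, since only the exceedance totals enter the recursion. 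What your route buys is the elimination of the paper's delicate order-reversal bookkeeping, and your bijection also delivers the counting symmetry $q^{\lambda}f_{\eta,\lambda}(n)=q^{\eta}f_{\lambda,\eta}(n)$ for free, which the paper obtains by the argument of Lemma~\ref{3lem10}; what the paper's route buys is strictly more information, namely the full reflection symmetry of the joint exceedance distribution, of which your identity is the average. Your concern about the $\kappa_{\mu,\eta}$ multiplicities when $\mu$ has repeated parts is the right thing to flag, and it is resolved exactly as you suggest: parts are treated as distinguishable cycles of a concrete permutation, which is also what makes the consistency check $\sum_{\eta}\kappa_{\mu,\eta}=\binom{\ell(\eta)+2i}{2i+1}$ in Corollary~\ref{4cor2} work.
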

	
	\begin{proof}
		Let $f_{\eta,\lambda}(n,a)$ denote the number of $\mathfrak{p}\in U_{\lambda}^{\eta}$
		having $a$ exceedances.
		Note that every plane permutation has at least one exceedance. Thus $0\leq a \leq n-1$.
		\begin{claim}
			\begin{equation}\label{2e7}
			\sum_{a\geq 0}(n-a-\ell(\eta))f_{\eta,\lambda}(n,a)=\sum_{i\geq 1}\sum_{\mu\rhd_{2i+1}\eta}
			\kappa_{\mu,\eta}f_{\mu,\lambda}(n).
			\end{equation}
		\end{claim}
		
		Given $\mathfrak{p}=(s,\pi)$ where the cycle-type of $\pi$ is $\eta$,
		a Case~$2$ transpose will result in $\mathfrak{p}^h=(s^h,\pi^h)$ such that $\pi^h$
		has cycle-type $\mu$ and $\mu\rhd_3 \eta$.
		Refining the proof of Proposition~\ref{2thm1}, we observe that each pair $(\mathfrak{p}=(s,\pi),\epsilon)$
		for which $\mathfrak{p}\in U_{\lambda}^{\eta}$ and $\epsilon$ is an NTAE,
		uniquely corresponds to a plane permutation $\mathfrak{p}^h=(s^h,\pi^h) \in U_{\lambda}^{\mu}$
		with $2i+1$ labeled cycles for some $i>0$, and $\mu\rhd_{2i+1}\eta$.
		Conversely, suppose we have $\mathfrak{p}^h=(s^h,\pi^h) \in U_{\lambda}^{\mu}$
		with $\mu\rhd_{2i+1}\eta$. If there are $\kappa_{\mu,\eta}$ ways
		to obtain $\eta$ by merging $2i+1$ $\mu$-parts into one, then we can label $2i+1$ cycles of
		$\mathfrak{p}^h$ in $\kappa_{\mu,\eta}$ different ways, which correspond to $\kappa_{\mu,\eta}$ pairs
		$(\mathfrak{p}=(s,\pi),\epsilon)$ where the cycle-type of $\pi$ is $\eta$ and this implies the Claim.
		
		Immediately, we have
		\begin{equation}\label{2e8}
		\sum_{a\geq 0}(n-a-\ell(\lambda))f_{\lambda,\eta}(n,a)=
		\sum_{i\geq 1}\sum_{\mu\rhd_{2i+1}\lambda}\kappa_{\mu,\lambda}f_{\mu,\eta}(n).
		\end{equation}

		\begin{claim}
			\begin{equation}\label{2e9}
			q^{\lambda} f_{\eta,\lambda}(n,a)=q^{\eta} f_{\lambda,\eta}(n,n-1-a).
			\end{equation}
		\end{claim}
		
		Note that any $\mathfrak{p}=(s,\pi) \in U_{\lambda}^{\eta}$ satisfies $s=D\pi$.
		Taking the inverse to ``reflect" the equation, we uniquely obtain $s^{-1}=\pi^{-1}D^{-1}$.
		The latter can be transformed into an equivalent plane permutation $\mathfrak{p'}=(s',\pi')\in U_{\eta}^{\lambda}$ by conjugation, where elements in $U_{\eta}^{\lambda}$ have a fixed permutation $D'$ of cycle-type $\eta$ as diagonal. Namely, for some $\gamma$, we have
		$$
		s'=\gamma s^{-1} \gamma^{-1}=\gamma (s_0 ~s_{n-1} ~\cdots ~s_1) \gamma^{-1},\quad \pi'=\gamma D^{-1}\gamma^{-1}, \quad
		D_{\mathfrak{p'}}=D'=\gamma \pi^{-1} \gamma^{-1}.
		$$
		Next, we will show that if $\mathfrak{p}$ has $a$ exceedances, the plane permutation $(s^{-1}, D^{-1})$
		has $n-1-a$ exceedances, so that $\mathfrak{p'}$ has $n-1-a$ exceedances according to
		Lemma~\ref{2lem8}. Indeed, if $\mathfrak{p}$ has $a$ exceedances,
		Lemma~\ref{2lem1} guarantees that
		$D$ has $n-(a+1)=n-1-a$ exceedances w.r.t. $<_s$.
		Since an exceedance in $D$ is a strict anti-exceedance (i.e., strictly decreasing) in $D^{-1}$,
		$D^{-1}$ has $n-1-a$ strict anti-exceedances w.r.t. $<_{s}$.
		However, following the linear order $\hat{s}=s_0 s_{n-1} s_{n-2}\cdots s_1$ (induced by $s^{-1}$), any strict anti-exceedance
		w.r.t. $<_{s}$ of $D^{-1}$ the image of which is not $s_0$, will become an exceedance.
		It remains to distinguish the following two situations:
		if $s_0$ is not the image of a strict anti-exceedance,  $s_0$ must be a
		fixed point, so $D^{-1}$ has $n-1-a$ exceedances;
		if $s_0$ is not a fixed point, the strict anti-exceedance having $s_0$ as image remains as a strict
		anti-excceedance in $D^{-1}$. Furthermore, $s_0$ must be an exceedance of
		$D^{-1}$ (w.r.t. $<_s$), and it remains to be an exceedance w.r.t.~$<_{\hat{s}}$.
		In this case, there are also $(n-1-a-1)+1=n-1-a$ exceedances in $D^{-1}$.
		Finally,
		due to the one-to-one correspondence, $f_{\eta,\lambda}(n,a)$ plane permutations $(s,\pi)$ imply that
		$f_{\eta,\lambda}(n,a)$ plane permutations $(s^{-1},D^{-1})$ have $n-1-a$ exceedances.
		Following the same argument as Lemma~\ref{3lem10}, the cardinality of the latter set is also equal to
		$\frac{q^{\eta}}{q^{\lambda}} f_{\lambda,\eta}(n,n-1-a)$, whence
		the claim.
		
		Therefore,
		\begin{align*}
		&\sum_{a\geq 0}(n-a-\ell(\eta)) q^{\lambda} f_{\eta,\lambda}(n,a)+\sum_{a\geq 0}(n-a-\ell(\lambda)) q^{\eta} f_{\lambda,\eta}(n,a)\\
		=& \sum_{a\geq 0}(n-a-\ell(\eta))q^{\lambda} f_{\eta,\lambda}(n,a)+(n-(n-1-a)-\ell(\lambda)) q^{\eta} f_{\lambda,\eta}(n,n-1-a)\\
		=& \sum_{a\geq 0}(n-a-\ell(\eta))q^{\lambda}f_{\eta,\lambda}(n,a)+(n-(n-1-a)-\ell(\lambda))q^{\lambda}f_{\eta,\lambda}(n,a)\\
		=&(n+1-\ell(\eta)-\ell(\lambda))\sum_{a\geq 0} q^{\lambda} f_{\eta,\lambda}(n,a)\\
		=&(n+1-\ell(\eta)-\ell(\lambda)) q^{\lambda} f_{\eta,\lambda}(n).
		\end{align*}
		Multiplying $q^{\lambda}$ and $q^{\eta}$ on both sides of Eq.~\eqref{2e7} and Eq.~\eqref{2e8}, respectively, and
		summing up the LHS and the RHS of Eq.~\eqref{2e7} and Eq.~\eqref{2e8}, respectively, completes the proof. \qed

	\end{proof}

	Summing over all $\eta$ with $\ell(\eta)=k$, we obtain

	\begin{corollary}\label{4cor2}
		For $\ell(\lambda)<n+1-k$, we have
		\begin{equation}\label{3eq111}
		p_k^{\lambda}(n)=\frac{\sum_{i\geq 1}{k+2i\choose k-1}p_{k+2i}^{\lambda}(n) q^{\lambda}
			+\sum_{i\geq 1}\sum_{\mu\rhd_{2i+1}\lambda}
			\kappa_{\mu,\lambda}p_k^{\mu}(n) q^{\mu}}{q^{\lambda} [n+1-k-\ell(\lambda)]}.
		\end{equation}
	\end{corollary}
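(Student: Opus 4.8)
The plan is to derive the corollary directly from the preceding theorem by summing its identity over all partitions $\eta$ of $n$ with $\ell(\eta)=k$, using the defining relation $p_k^{\lambda}(n)=\sum_{\eta:\,\ell(\eta)=k} f_{\eta,\lambda}(n)$. First I would clear the denominator, rewriting the theorem as
\[
q^{\lambda}\,[\,n+1-\ell(\eta)-\ell(\lambda)\,]\,f_{\eta,\lambda}(n)
= q^{\lambda}\sum_{i\ge1}\sum_{\mu\rhd_{2i+1}\eta}\kappa_{\mu,\eta}f_{\mu,\lambda}(n)
+ q^{\eta}\sum_{i\ge1}\sum_{\mu\rhd_{2i+1}\lambda}\kappa_{\mu,\lambda}f_{\mu,\eta}(n),
\]
which is valid for every $\eta$ with $\ell(\eta)=k$ precisely because the hypothesis $\ell(\lambda)<n+1-k$ guarantees $\ell(\eta)+\ell(\lambda)<n+1$. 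Since $\ell(\eta)=k$ is fixed, the bracket $n+1-\ell(\eta)-\ell(\lambda)$ equals the constant $n+1-k-\ell(\lambda)$ throughout the range of summation, so summing the left-hand side over $\eta$ yields $q^{\lambda}\,[\,n+1-k-\ell(\lambda)\,]\,p_k^{\lambda}(n)$, which becomes the left-hand side of Eq.~\eqref{3eq111} after dividing back by the bracket.

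It then remains to evaluate the two sums on the right. For the first, I would interchange the order of summation and sum over $\mu$ before $\eta$: since $\mu\rhd_{2i+1}\eta$ forces $\ell(\mu)=\ell(\eta)+2i=k+2i$, this term becomes $q^{\lambda}\sum_{i\ge1}\sum_{\mu:\,\ell(\mu)=k+2i}f_{\mu,\lambda}(n)\bigl(\sum_{\eta}\kappa_{\mu,\eta}\bigr)$. The key observation is that the inner sum $\sum_{\eta}\kappa_{\mu,\eta}$ counts all selections of $2i+1$ of the $k+2i$ parts of $\mu$ to be merged into one, hence equals $\binom{k+2i}{2i+1}=\binom{k+2i}{k-1}$ independently of $\mu$; combined with $\sum_{\mu:\,\ell(\mu)=k+2i}f_{\mu,\lambda}(n)=p_{k+2i}^{\lambda}(n)$ this reproduces the first summand $q^{\lambda}\sum_{i\ge1}\binom{k+2i}{k-1}p_{k+2i}^{\lambda}(n)$ of the numerator, in exact agreement with Theorem~\ref{2thm2}.

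For the second sum, the inner double sum over $\mu\rhd_{2i+1}\lambda$ does not involve $\eta$ except through $f_{\mu,\eta}(n)$, so I would pull the $\eta$-summation inside and invoke the reflection symmetry $q^{\eta}f_{\mu,\eta}(n)=q^{\mu}f_{\eta,\mu}(n)$, i.e.\ the summed form of Eq.~\eqref{2e9} with the roles of the two cycle types relabeled. This turns $\sum_{\eta:\,\ell(\eta)=k}q^{\eta}f_{\mu,\eta}(n)$ into $q^{\mu}\sum_{\eta:\,\ell(\eta)=k}f_{\eta,\mu}(n)=q^{\mu}p_k^{\mu}(n)$, yielding the second summand $\sum_{i\ge1}\sum_{\mu\rhd_{2i+1}\lambda}\kappa_{\mu,\lambda}p_k^{\mu}(n)q^{\mu}$. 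Assembling the two pieces and dividing by $q^{\lambda}\,[\,n+1-k-\ell(\lambda)\,]$ gives Eq.~\eqref{3eq111}. The one step that requires genuine care is the combinatorial identity $\sum_{\eta}\kappa_{\mu,\eta}=\binom{k+2i}{k-1}$: one must read $\kappa_{\mu,\eta}$ as counting merges with the parts treated as distinguishable, as they are when realized as the actual cycles of a concrete permutation, so that summing over all reachable targets $\eta$ recovers the total number $\binom{k+2i}{2i+1}$ of $(2i+1)$-element subsets of the $k+2i$ parts of $\mu$; everything else is bookkeeping built on the symmetry relation and the definitions of $p_k^{\lambda}(n)$ and $f_{\eta,\lambda}(n)$.
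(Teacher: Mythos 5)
Your proposal is correct and follows essentially the same route as the paper: clear the denominator in the preceding theorem, sum over all $\eta$ with $\ell(\eta)=k$, interchange summation and use $\sum_{\eta}\kappa_{\mu,\eta}=\binom{k+2i}{2i+1}=\binom{k+2i}{k-1}$ together with $\sum_{\mu:\ell(\mu)=k+2i}f_{\mu,\lambda}(n)=p_{k+2i}^{\lambda}(n)$ for the first term, and the symmetry $q^{\eta}f_{\mu,\eta}(n)=q^{\mu}f_{\eta,\mu}(n)$ to collapse the second term to $q^{\mu}p_k^{\mu}(n)$. The only difference is cosmetic: you spell out explicitly the reflection symmetry and the distinguishable-parts reading of $\kappa_{\mu,\eta}$, both of which the paper uses implicitly.
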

	\begin{proof}
		For any $\mu$ with $\ell(\mu)=k+2i$, merging any $2i+1$ parts leads to some $\eta$ with $\ell(\eta)=k$ and $\mu\rhd_{2i+1} \eta$.
		Also note, if $\mu\rhd_{2i+1} \eta$ does not hold, $\kappa_{\mu,\eta}=0$.
		Thus, for any $\mu$ with $\ell(\mu)=k+2i$, $\sum_{\eta, \ell(\eta)=k}\kappa_{\mu,\eta}={k+2i\choose 2i+1}$.
		Furthermore, $\sum_{\mu, \ell(\mu)=k+2i}f_{\mu,\lambda}(n)=p_{k+2i}^{\lambda}(n)$.
		Therefore,
		\begin{eqnarray*}
			\sum_{\eta, \atop \ell(\eta)=k}\sum_{i\geq 1}\sum_{\mu\rhd_{2i+1}\eta}
			\kappa_{\mu,\eta}f_{\mu,\lambda}(n) q^{\lambda}&=&
			\sum_{i\geq 1}\sum_{\mu, \atop \ell(\mu)=k+2i}\sum_{\eta, \atop \ell(\eta)=k}
			\kappa_{\mu,\eta}f_{\mu,\lambda}(n) q^{\lambda}\\
			&=& \sum_{i\geq 1}{k+2i\choose k-1}p_{k+2i}^{\lambda}(n) q^{\lambda}.
		\end{eqnarray*}
		We also have
		\begin{eqnarray*}
			\sum_{\eta, \atop \ell(\eta)=k}\sum_{i\geq 1}\sum_{\mu\rhd_{2i+1}\lambda}
			\kappa_{\mu,\lambda}f_{\mu,\eta}(n)q^{\eta}
			&=&\sum_{i\geq 1}\sum_{\mu\rhd_{2i+1}\lambda}
			\kappa_{\mu,\lambda}\sum_{\eta, \atop \ell(\eta)=k}f_{\mu,\eta}(n)q^{\eta}\\
			&=&\sum_{i\geq 1}\sum_{\mu\rhd_{2i+1}\lambda}
			\kappa_{\mu,\lambda}p_k^{\mu}(n) q^{\mu},
		\end{eqnarray*}
		whence the corollary. \qed
	\end{proof}

		Note that $p_1^{\lambda}(n)$ is the number of ways of writing a permutation of cycle-type $\lambda$
		into two $n$-cycles. In Stanley~\cite{stan}, an explicit formula for $p_1^{\lambda}(n)$ was given as,
		if $\lambda=(1^{a_1},2^{a_2},\ldots,n^{a_n})$, then
		\begin{equation*}
		p_1^{\lambda}(n)=\sum_{i=0}^{n-1}\frac{i!(n-1-i)!}{n}\sum_{r_1,\ldots,r_i}
		{a_1-1\choose r_1}{a_2\choose r_2}\cdots{a_i\choose r_i}(-1)^{r_2+r_4+r_6+\cdots},
		\end{equation*}
		where $r_1,\ldots,r_i$ ranges over all non-negative integer solutions of the
		equation $\sum_j jr_j=i$. As a quick application of Corollary~\ref{4cor2},
		we obtain a recurrence for $p_1^{\lambda}(n)$ from which we can obtain simple closed formulas for some particular cases which seems not obvious from Stanley's explicit formula.
		
		\begin{proposition}
			For any $\lambda \vdash n$ and $n-\ell(\lambda)$ even, we have
			\begin{align}
			p_1^{\lambda}(n)=\frac{(n-1)!+\sum_{i \geq 1}\sum_{\mu,\atop \ell(\mu)=2i+\ell(\lambda)}\kappa_{\mu,\lambda}p_1^{\mu}(n)\frac{q^{\mu}}{q^{\lambda}}}{n+1-\ell(\lambda)}.
			\end{align}
			In particular, for $\lambda$ having only small parts, we have
			\begin{align}
			p_1^{1^{a_1}2^{a_2}} &= \frac{(n-1)!}{n+1-a_1-a_2},\\
			p_1^{1^{a_1}2^{a_2}3^1} &= \frac{(n-1)![2(n-a_1-a_2)-3]}{2(n-a_1-a_2)(n-2-a_1-a_2)},\\
			p_1^{1^{a_1}2^{a_2}4^1} &= \frac{(n-1)!(n-1-a_1-a_2)}{(n-2-a_1-a_2)(n-a_1-a_2)}.
			\end{align}
		\end{proposition}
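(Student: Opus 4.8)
The plan is to specialize the recurrence of Corollary~\ref{4cor2} to $k=1$ and then to eliminate its residual tail $\sum_{i\ge 1}p_{1+2i}^{\lambda}(n)$ by a parity argument. Setting $k=1$ in Eq.~\eqref{3eq111} and using $\binom{1+2i}{0}=1$ gives
\[
p_1^{\lambda}(n)\,q^{\lambda}[n-\ell(\lambda)]=q^{\lambda}\sum_{i\ge 1}p_{1+2i}^{\lambda}(n)+\sum_{i\ge 1}\sum_{\mu\rhd_{2i+1}\lambda}\kappa_{\mu,\lambda}\,q^{\mu}p_1^{\mu}(n).
\]
First I would record two facts. Since $D_{\mathfrak{p}}=s\pi^{-1}$, every $n$-cycle $s$ determines the unique $\pi=D^{-1}s$ with $(s,\pi)\in U_D$, so $|U_D|=\sum_{k}p_k^{\lambda}(n)=(n-1)!$. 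Next, when $n-\ell(\lambda)$ is even, Proposition~\ref{2pro2} gives $C(\pi)\equiv n-1-\ell(\lambda)\pmod 2$, which is odd, for every $\mathfrak{p}\in U_D$; hence only odd $k$ occur and $\sum_{i\ge 0}p_{1+2i}^{\lambda}(n)=(n-1)!$, i.e.\ $\sum_{i\ge 1}p_{1+2i}^{\lambda}(n)=(n-1)!-p_1^{\lambda}(n)$. Substituting this and moving the resulting term $q^{\lambda}p_1^{\lambda}(n)$ to the left upgrades the factor $n-\ell(\lambda)$ to $n+1-\ell(\lambda)$; dividing by $q^{\lambda}[n+1-\ell(\lambda)]$ yields the asserted general recurrence.

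For the three closed formulas I would feed explicit $\lambda$ into this recurrence and exploit that the inner double sum is very short. If $\lambda=1^{a_1}2^{a_2}$ has no part exceeding $2$, then no part can be split into $2i+1\ge 3$ nonzero parts, the sum is empty, and $p_1^{1^{a_1}2^{a_2}}=(n-1)!/(n+1-a_1-a_2)$ follows at once. When $\lambda$ has a single part equal to $3$ or $4$, only $i=1$ contributes and a single $\mu$ arises: splitting $3$ as $1+1+1$ gives $\mu=1^{a_1+3}2^{a_2}$, and splitting $4$ as $2+1+1$ gives $\mu=1^{a_1+2}2^{a_2+1}$. Since each such $\mu$ again has all parts $\le 2$, the value $p_1^{\mu}(n)$ is supplied by the first formula, so the three identities are proved in sequence. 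It then remains to evaluate $\kappa_{\mu,\lambda}$ as the number of selections of the $2i+1$ parts of $\mu$ to be merged---equal parts treated as distinguishable, as forced by the identity $\sum_{\ell(\eta)=k}\kappa_{\mu,\eta}=\binom{\ell(\mu)}{2i+1}$ used in Corollary~\ref{4cor2}---which yields $\binom{a_1+3}{3}$ in the $3$-case and $(a_2+1)\binom{a_1+2}{2}$ in the $4$-case, and to substitute the ratio $q^{\mu}/q^{\lambda}$ computed from $q^{\nu}=n!/\prod_{j\ge 1}j^{c_j}c_j!$ for $\nu=1^{c_1}2^{c_2}\cdots$.

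The substantive step is the parity reduction: it is exactly what replaces the awkward tail $\sum_{i\ge1}p_{1+2i}^{\lambda}(n)$ by the clean constant $(n-1)!$ and shifts the denominator from $n-\ell(\lambda)$ to $n+1-\ell(\lambda)$. The remaining work is routine but delicate bookkeeping of $\kappa_{\mu,\lambda}$ and $q^{\mu}/q^{\lambda}$, in which many factorial factors cancel (for the $4$-case the product $\kappa_{\mu,\lambda}\,q^{\mu}/q^{\lambda}$ even collapses to $1$); the only real pitfall is miscounting how many equal parts of $\mu$ may legitimately be merged. Finally, the degenerate case $\ell(\lambda)=n$ (so $n-\ell(\lambda)=0$, where Corollary~\ref{4cor2} does not apply) is disposed of directly, since then $\lambda=1^n$, $D$ is the identity, and $p_1^{1^n}(n)=(n-1)!$ agrees with the formula $(n-1)!/(n+1-n)$.
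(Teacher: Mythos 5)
Your proposal is correct and follows essentially the same route as the paper: specialize Corollary~\ref{4cor2} to $k=1$, use the parity observation that only odd cycle numbers occur (so the tail $\sum_{i\ge 1}p_{1+2i}^{\lambda}(n)$ equals $(n-1)!-p_1^{\lambda}(n)$, shifting the denominator to $n+1-\ell(\lambda)$), and then evaluate the short splitting sums with the same $\kappa_{\mu,\lambda}$ and $q^{\mu}/q^{\lambda}$ bookkeeping. Your explicit handling of the $4^1$ case and of the degenerate case $\ell(\lambda)=n$ goes slightly beyond the paper, which only works out the $3^1$ case and leaves the rest as analogous, but the argument is the same.
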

		\proof Setting $k=1$ in eq.~\eqref{3eq111}, we have
		\begin{equation*}
		[n-\ell(\lambda)]	p_1^{\lambda}(n)=\sum_{i\geq 1}{1+2i\choose 0}p_{1+2i}^{\lambda}(n)
		+\sum_{i\geq 1}\sum_{\mu\rhd_{2i+1}\lambda}
		\kappa_{\mu,\lambda}p_1^{\mu}(n) \frac{q^{\mu}}{q^{\lambda}}.
		\end{equation*}
		Note, for $n-\ell(\lambda)$ even, a permutation of cycle-type $\lambda$ can be written as a product of any $n$-cycle and a permutation with $2i+1$ cycles for some $i\geq 0$. Thus, $\sum_{i\geq 0}{1+2i\choose 0}p_{1+2i}^{\lambda}(n)=(n-1)!$ whence the recursion.
		
		For the particular cases, we will only show the second one since the other two follow analogously. For $\lambda=1^{a_1}2^{a_2}3^1$, we observe $\kappa_{\mu,\lambda}\neq 0$ iff $\mu=1^{a_1+3}2^{a_2}$. In this case, $\kappa_{\mu,\lambda}={a_1+3 \choose 3}$.
		Then, using eq.~$(19)$ for $1^{a_1+3}2^{a_2}$, $q^{\lambda}=\frac{n!}{1^{a_1}2^{a_2}3^1 a_1! a_2! 1!}$ and $q^{\mu}=\frac{n!}{1^{a_1+3}2^{a_2} (a_1+3)! a_2!}$, and we obtain the second formula. \qed
	
	}

\section{Another combinatorial proof for Zagier and Stanley's result}

We will provide another combinatorial proof for Zagier and Stanley's result in the following. First, it is obvious that $\xi_{1,k}(n)=0$ if $n-k$ is odd from Proposition~\ref{2pro2}.
In addition, note that $\xi_{1,k}(n)=p_k^{\lambda}(n)$ when $\lambda=n^1$.
Then, from Corollary~\ref{4cor2}, we obtain
	
	\begin{corollary}
		For $k\geq 1$, $n\geq 1$ and $n-k$ is even,
		\begin{equation} \label{3eq2}
		(n+1-k)\xi_{1,k}(n)=\sum_{i\geq 1}{k+2i\choose k-1}\xi_{1,k+2i}(n)+C(n,k).
		\end{equation}
	\end{corollary}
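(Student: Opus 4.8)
The plan is to specialize Corollary~\ref{4cor2} to the one-part partition $\lambda=n^1$, for which $\ell(\lambda)=1$, $q^{\lambda}=(n-1)!$ (the number of $n$-cycles), and $n+1-k-\ell(\lambda)=n-k$. Since $\xi_{1,k}(n)=p_k^{n^1}(n)$ and likewise $\xi_{1,k+2i}(n)=p_{k+2i}^{n^1}(n)$, substituting into Eq.~\eqref{3eq111} and clearing the denominator $q^{\lambda}$ gives
$$
(n-k)\,\xi_{1,k}(n)=\sum_{i\geq 1}\binom{k+2i}{k-1}\xi_{1,k+2i}(n)+\frac{1}{(n-1)!}\sum_{i\geq 1}\sum_{\mu\rhd_{2i+1}n^1}\kappa_{\mu,n^1}\,p_k^{\mu}(n)\,q^{\mu}.
$$
This step requires $\ell(\lambda)<n+1-k$, i.e.\ $k<n$; the boundary case $k=n$, where both sides of Eq.~\eqref{3eq2} reduce to $1$, I would simply verify directly.

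Next I would simplify the residual double sum. A partition $\mu\rhd_{2i+1}n^1$ is exactly a partition of $n$ into $2i+1$ parts, and recovering $n^1$ from such a $\mu$ forces the merging of all $2i+1$ parts, which can be done in only one way, so $\kappa_{\mu,n^1}=1$. Thus the inner sum runs over all $\mu\vdash n$ with $\ell(\mu)$ odd and $\ell(\mu)\geq 3$. The crux is then the counting identity $\sum_{\mu\vdash n}p_k^{\mu}(n)\,q^{\mu}=(n-1)!\,C(n,k)$, which I would establish combinatorially: by conjugation invariance the quantity $q^{\mu}p_k^{\mu}(n)$ counts the plane permutations $(s,\pi)$ whose diagonal $D_{\mathfrak p}=s\pi^{-1}$ has cycle-type $\mu$ and whose $\pi$ has $k$ cycles, so summing over all $\mu$ counts every pair $(s,\pi)$ with $s$ an $n$-cycle and $C(\pi)=k$; these two choices being independent, the total is $(n-1)!\,C(n,k)$.

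Finally I would invoke the parity constraint of Proposition~\ref{2pro2}: since $n-k$ is even, $\ell(\mu)\equiv n-1-k$ is odd whenever $p_k^{\mu}(n)\neq 0$, so the even-length $\mu$ contribute nothing and the sum over $\ell(\mu)=2i+1,\ i\geq 1$, equals the full sum $\sum_{\mu\vdash n}p_k^{\mu}(n)q^{\mu}$ minus the single $\ell(\mu)=1$ term $q^{n^1}p_k^{n^1}(n)=(n-1)!\,\xi_{1,k}(n)$. Hence the residual sum equals $C(n,k)-\xi_{1,k}(n)$, and substituting this back and moving $-\xi_{1,k}(n)$ to the left upgrades the coefficient $n-k$ to $n+1-k$, yielding Eq.~\eqref{3eq2} exactly. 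I expect the main obstacle to lie in the counting identity together with the parity bookkeeping—specifically, arguing cleanly that even-length diagonals drop out so that the sum beginning at $i=1$ recovers the full sum minus the $\lambda=n^1$ contribution; the remaining manipulations are routine substitutions into Corollary~\ref{4cor2}.
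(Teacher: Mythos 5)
Your proposal is correct and follows essentially the same route as the paper's proof: specialize Corollary~\ref{4cor2} to $\lambda=n^1$, note that $\kappa_{\mu,n^1}=1$ and that $\mu\rhd_{2i+1}n^1$ means exactly $\ell(\mu)=2i+1$, identify the residual sum with $C(n,k)-\xi_{1,k}(n)$ by counting factorizations of a fixed $n$-cycle into a permutation with $k$ cycles times another permutation, and use the parity constraint of Proposition~\ref{2pro2} to discard even-length diagonals. Your explicit check of the boundary case $k=n$ (where the hypothesis $\ell(\lambda)<n+1-k$ of Corollary~\ref{4cor2} fails) is a small point of rigor that the paper leaves implicit.
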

	\proof Inspecting eq.~\eqref{3eq111}, it suffices to show that
	$$
	\sum_{i\geq 1}\sum_{\mu\rhd_{2i+1}\lambda}
	\kappa_{\mu,\lambda}p_k^{\mu}(n) \frac{q^{\mu}}{q^{n^1}}=C(n-k)-\xi_{1,k}(n).
	$$
	To this end, we first observe that for $\lambda=n^1$, $\mu\rhd_{2i+1}\lambda$ iff $\ell(\mu)=2i+1$.
	And $\kappa_{\mu,\lambda}=1$ then.
	Also, by symmetry the number of ways of writing the $n$-cyle $(1~2~\cdots ~n)$ into a product of a permutation with
	$k$ cycles and a permutation of cycle-type $\mu$ equals to $\frac{q^{\mu}}{q^{n^1}} p^{\mu}_k (n)$.
	On the other hand, it is easy to see that ranging over all $\mu \vdash n$, the total number of ways is exactly $C(n,k)$.
	Furthermore, if $n-k$ is even,
	Proposition~\ref{2pro2}
	implies that $(1~2~\cdots ~n)$ can be only factorized into a permutation with $k$ cycles and
	a permutation with $j$ cycles for some odd $j$, i.e., only $\ell(\mu)=2i+1$ matter.
	Thus,
	$$
	\sum_{i\geq 1}\sum_{\mu\rhd_{2i+1}\lambda}
	\kappa_{\mu,\lambda}p_k^{\mu}(n) \frac{q^{\mu}}{q^{n^1}}=\sum_{i\geq 1}\sum_{\mu, \atop \ell(\mu)=2i+1}
	p_k^{\mu}(n) \frac{q^{\mu}}{q^{n^1}}=C(n-k)-\xi_{1,k}(n),
	$$
	completing the proof. \qed
	
	Our idea to prove $\xi_{1,k}(n)=\frac{2}{n(n+1)}C(n+1,k)$ for $n-k$ even is to show both sides satisfy
	the same recurrence and initial conditions.
	To this end, we will relate the obtained results in terms of exceedances of
		plane permutations and exceedances of (ordinary) permutations.
		
		Obviously, exceedances of a plane permutation of the form $(\varepsilon_n,\pi)$
		is the same as exceedances of the ordinary permutation $\pi$.
		Let $\mathfrak{p}=(s,\pi)\in U_D$, where $\mathfrak{p}$ has $a$ exceedances and $k$ cycles.
		Assume $\gamma s\gamma^{-1}=\varepsilon_n=(1~2~\cdots ~n)$. Then, the plane permutation
		$(\varepsilon_n,\gamma \pi \gamma^{-1})$ has $a$ exceedances and $k$ cycles according to Lemma~\ref{2lem8}.
		Furthermore, its diagonal is equal to $\gamma D \gamma^{-1}$ which is of cycle-type
		$\lambda$.
		
		{\bf Observation:} viewing ordinary permutations $\pi$ as plane permutations of the form $(\varepsilon_n,\pi)$
		provides a new way to classify permutations, i.e., by the diagonals.
		
		\begin{lemma}\label{3lem10}
			Let $\hat{p}_{a,k}^{\lambda}(n)$ denote the number of ordinary permutations having $k$ cycles,
			$a$ exceedances and $\lambda$ as the cycle-type of diagonals.
			Let $\hat{p}_{k}^{\lambda}(n)$ denote the number of ordinary permutations having $k$ cycles
			and $\lambda$ as the cycle-type of diagonals.
			Then,
			$$
			q^{\lambda}p_{a,k}^{\lambda}(n)=q^{n^1} \hat{p}_{a,k}^{\lambda}(n)=(n-1)! \hat{p}_{a,k}^{\lambda}(n), \quad
			q^{\lambda}p^{\lambda}_k(n)=(n-1)! \hat{p}^{\lambda}_k (n).
			$$
		\end{lemma}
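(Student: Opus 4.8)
The plan is to count a single master set in two different ways. Define $\mathcal{P}_{a,k}^{\lambda}(n)$ to be the set of all plane permutations $(s,\pi)$ on $[n]$, with $s$ ranging over \emph{every} $n$-cycle, such that $Exc(s,\pi)=a$, $C(\pi)=k$, and the diagonal $s\pi^{-1}$ has cycle-type $\lambda$. I would compute $|\mathcal{P}_{a,k}^{\lambda}(n)|$ by fibering it first over the diagonal and then over the top row, and then match the two expressions. The whole argument rests on the conjugation symmetry recorded in Lemma~\ref{2lem8} together with the fact that conjugation preserves cycle-type.

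First I would fiber $\mathcal{P}_{a,k}^{\lambda}(n)$ over the diagonal. For a fixed permutation $D$ of type $\lambda$, the fiber is exactly $\{(s,\pi)\in U_D : Exc(s,\pi)=a,\ C(\pi)=k\}$, whose cardinality is $p_{a,k}^{\lambda}(n)$. This count is independent of which representative $D$ of type $\lambda$ we pick: for any $\alpha$ with $\alpha D'\alpha^{-1}=D$, conjugation by $\alpha$ maps $U_{D'}$ bijectively onto $U_{D}$ and, by Lemma~\ref{2lem8}, preserves the number of exceedances while preserving $C(\pi)$ since conjugation preserves cycle-type. Summing over the $q^{\lambda}$ choices of $D$ gives
\begin{equation*}
|\mathcal{P}_{a,k}^{\lambda}(n)| = q^{\lambda}\, p_{a,k}^{\lambda}(n).
\end{equation*}

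Next I would fiber the same set over the top row. For a fixed $n$-cycle $s$, the fiber is $\{\pi : Exc(s,\pi)=a,\ C(\pi)=k,\ s\pi^{-1}\text{ of type }\lambda\}$. When $s=\varepsilon_n=(1~2~\cdots~n)$, the exceedances of $(\varepsilon_n,\pi)$ coincide with the ordinary exceedances of $\pi$, so this fiber is precisely the set counted by $\hat{p}_{a,k}^{\lambda}(n)$. For an arbitrary $n$-cycle $s$, choose $\gamma$ with $\gamma s\gamma^{-1}=\varepsilon_n$; then $(s,\pi)\mapsto(\varepsilon_n,\gamma\pi\gamma^{-1})$ is a bijection from the fiber over $s$ onto the fiber over $\varepsilon_n$. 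By Lemma~\ref{2lem8} it respects $Exc$, by conjugation-invariance of cycle-type it respects $C(\pi)$, and since the diagonal of the image is $\gamma\,(s\pi^{-1})\,\gamma^{-1}$ it also respects the type $\lambda$ of the diagonal. Hence every top-row fiber has size $\hat{p}_{a,k}^{\lambda}(n)$, and since there are $q^{n^1}=(n-1)!$ distinct $n$-cycles,
\begin{equation*}
|\mathcal{P}_{a,k}^{\lambda}(n)| = (n-1)!\, \hat{p}_{a,k}^{\lambda}(n).
\end{equation*}
Equating the two expressions yields the first identity, and summing over $a\ge 0$ (using $\sum_a p_{a,k}^{\lambda}=p_k^{\lambda}$ and $\sum_a \hat{p}_{a,k}^{\lambda}=\hat{p}_k^{\lambda}$) gives the second.

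The argument is essentially bookkeeping once the symmetry is in place; the only point demanding care is verifying that each fibration is \emph{uniform}, i.e.\ that all fibers within a given fibration have equal cardinality. This is exactly where Lemma~\ref{2lem8} is indispensable: without the invariance of $Exc$ under conjugation one could neither conclude that $p_{a,k}^{\lambda}$ is well defined independently of the representative $D$, nor that the top-row fibers all reduce to the $s=\varepsilon_n$ case. I would therefore spell out the two conjugation bijections explicitly and check that each simultaneously preserves $Exc$, $C(\pi)$, and the cycle-type of the diagonal, which is the crux of the proof.
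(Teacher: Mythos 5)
Your proposal is correct and takes essentially the same approach as the paper: both count the single master set of plane permutations $(s,\pi)$ with $a$ exceedances, $k$ cycles, and diagonal of type $\lambda$ in two ways, fibering once over the top row (giving $(n-1)!\,\hat{p}_{a,k}^{\lambda}(n)$) and once over the diagonal (giving $q^{\lambda}p_{a,k}^{\lambda}(n)$). The only difference is one of exposition: you spell out the conjugation bijections and their invariance properties (via Lemma~\ref{2lem8}) explicitly, whereas the paper asserts the uniformity of the fibers as clear.
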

		\proof Let $S$ be a set of plane permutations $(s,\pi)$ on $[n]$ having $k$ cycles,
		$a$ exceedances and $\lambda$ as the cycle-type of diagonals. Clearly, for any fixed $s$, the number of plane permutations of the form $(s,\pi)$ is the same as the number of plane permutations of the form $(\varepsilon_n,\pi)$ there. Thus,
		$|S|=q^{n^1} \hat{p}_{a,k}^{\lambda}(n)=(n-1)! \hat{p}_{a,k}^{\lambda}(n)$.
		Similarly, the number of plane permutations having a fixed permutation of cycle-type $\lambda$ as diagonal does not depend on specific choice of the permutation. Hence, $|S|=q^{\lambda}p_{a,k}^{\lambda}(n)$, completing the proof of the first equation. The same reasoning leads to the second equation. \qed

		\begin{proposition}
			Let $p_{a,k}(n)$ denote the number of permutations on $[n]$ containing $a$ exceedances and $k$
			cycles. Then,
			\begin{equation}\label{9eq1}
				\sum_{a\geq 0}(n-a-k) p_{a,k}(n)=\sum_{i=1}^{\lfloor\frac{n-k}{2}\rfloor}{k+2i\choose k-1}C(n,k+2i).
			\end{equation}
			In particular, $p_{0,n}(n)=1$, $p_{1,n-1}(n)={n\choose 2}$.
		\end{proposition}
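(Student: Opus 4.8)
The plan is to obtain \eqref{9eq1} by summing the fixed-diagonal identity of Theorem~\ref{2thm2} over all cycle-types $\lambda\vdash n$, after translating the plane-permutation counts into ordinary-permutation counts through Lemma~\ref{3lem10}. The starting observation is that classifying an ordinary permutation by the cycle-type $\lambda$ of its diagonal partitions the whole symmetric group, so that $p_{a,k}(n)=\sum_{\lambda\vdash n}\hat{p}_{a,k}^{\lambda}(n)$ and, for every $m$, $\sum_{\lambda\vdash n}\hat{p}_{m}^{\lambda}(n)=C(n,m)$, since the latter simply counts \emph{all} permutations on $[n]$ with $m$ cycles irrespective of their diagonal.

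Next I would take the identity of Theorem~\ref{2thm2} for a fixed $\lambda$ and multiply both sides by the weight $q^{\lambda}/(n-1)!$. By Lemma~\ref{3lem10} we have $\frac{q^{\lambda}}{(n-1)!}p_{a,k}^{\lambda}(n)=\hat{p}_{a,k}^{\lambda}(n)$ and $\frac{q^{\lambda}}{(n-1)!}p_{k+2i}^{\lambda}(n)=\hat{p}_{k+2i}^{\lambda}(n)$, so the weighted identity reads
\begin{equation*}
\sum_{a\geq 0}(n-a-k)\hat{p}_{a,k}^{\lambda}(n)=\sum_{i\geq 1}\binom{k+2i}{k-1}\hat{p}_{k+2i}^{\lambda}(n).
\end{equation*}
Summing this over all $\lambda\vdash n$ and interchanging the (finite) sums, the left-hand side collapses to $\sum_{a\geq 0}(n-a-k)p_{a,k}(n)$ and the right-hand side to $\sum_{i\geq 1}\binom{k+2i}{k-1}C(n,k+2i)$ by the two bridging identities above.

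It then remains to pin down the range of $i$. Since $C(n,m)=0$ whenever $m>n$, the term indexed by $i$ vanishes unless $k+2i\leq n$, i.e. $i\leq\lfloor (n-k)/2\rfloor$, which truncates the sum exactly as claimed in \eqref{9eq1}. The two special values I would argue by direct inspection rather than from the recurrence: a permutation with $n$ cycles is the identity, which has no exceedances, giving $p_{0,n}(n)=1$; and a permutation with $n-1$ cycles is a single transposition $(i\ j)$ together with fixed points, which has precisely one exceedance (at $\min\{i,j\}$), so $p_{1,n-1}(n)=\binom{n}{2}$.

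The argument is essentially bookkeeping once Theorem~\ref{2thm2} and Lemma~\ref{3lem10} are in hand; the only point requiring care is that the conversion factor $q^{\lambda}/(n-1)!$ is applied uniformly to both sides \emph{before} summing, and the key conceptual step is recognizing that summing the diagonal-refined counts $\hat{p}_{k+2i}^{\lambda}$ over $\lambda$ reassembles the unsigned Stirling number $C(n,k+2i)$. I expect no genuine obstacle beyond checking that all sums involved are finite (guaranteed by Proposition~\ref{2pro1}, which bounds the number of cycles), so that the interchange of summation over $\lambda$ and over $a$ (respectively $i$) is legitimate.
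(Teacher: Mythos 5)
Your proof is correct and follows essentially the same route as the paper's: multiply the fixed-diagonal identity of Theorem~\ref{2thm2} by $q^{\lambda}/(n-1)!$, use Lemma~\ref{3lem10} to convert plane-permutation counts into the diagonal-refined ordinary counts $\hat{p}_{a,k}^{\lambda}(n)$, and sum over all cycle-types $\lambda$. Your additional remarks—justifying the truncation of the sum at $i\leq\lfloor (n-k)/2\rfloor$ via $C(n,m)=0$ for $m>n$, and verifying the two initial values directly—are details the paper states without proof, and they are handled correctly.
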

		\begin{proof}
			According to Lemma~\ref{3lem10},
			we have
			\begin{align*}
				p_{a,k}(n)&=\sum_{\lambda} \hat{p}_{a,k}^{\lambda}(n)=\sum_{\lambda} \frac{q^{\lambda}}{(n-1)!}{p}_{a,k}^{\lambda}(n), \\
				C(n,k)&=\sum_{\lambda}  \hat{p}^{\lambda}_k (n)=\sum_{\lambda}\frac{q^{\lambda}}{(n-1)!} p_{k}^{\lambda}(n).
			\end{align*}
			Multiplying $\frac{q^{\lambda}}{(n-1)!}$ on both sides of eq.~\eqref{3eq3} and
			summing over all possible cycle-types $\lambda$
			gives the proposition. \qed
		\end{proof}

		Clearly, we have $\sum_a p_{a,k}(n)=C(n,k)$ and furthermore $\sum_{a}a p_{a,k}(n)$ counts
		the total number of exceedances in all permutations with $k$ cycles. Hence, reformulating Eq.~\eqref{9eq1},
		we have the following corollary:
		\begin{corollary}\label{3cor1}
			The total number of exceedances in all permutations on $[n]$ with $k$ cycles is given by
			\begin{equation}
				\sum_{a}a p_{a,k}(n)=(n-k)C(n,k)-\sum_{i=1}^{\lfloor\frac{n-k}{2}\rfloor}{k+2i\choose k-1}C(n,k+2i).
			\end{equation}
		\end{corollary}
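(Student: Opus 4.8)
The plan is to derive the corollary as a direct algebraic reformulation of Eq.~\eqref{9eq1}, relying on just two elementary facts about the refined counting numbers $p_{a,k}(n)$. The first is the observation recorded just above the statement: summing $p_{a,k}(n)$ over all possible exceedance counts $a$ enumerates every permutation on $[n]$ with exactly $k$ cycles once each, so $\sum_a p_{a,k}(n)=C(n,k)$. The second is that $\sum_a a\,p_{a,k}(n)$ weights each such permutation by its number of exceedances, and therefore equals the total number of exceedances taken over all $k$-cycle permutations --- which is precisely the quantity the corollary evaluates.

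With these two facts in hand, I would expand the summand on the left-hand side of Eq.~\eqref{9eq1} by writing the coefficient as $n-a-k=(n-k)-a$ and distributing the sum over $a$. This splits the left-hand side into $(n-k)\sum_a p_{a,k}(n)$ minus $\sum_a a\,p_{a,k}(n)$. Substituting $\sum_a p_{a,k}(n)=C(n,k)$ into the first term recasts Eq.~\eqref{9eq1} as
$$
(n-k)C(n,k)-\sum_{a}a\,p_{a,k}(n)=\sum_{i=1}^{\lfloor\frac{n-k}{2}\rfloor}{k+2i\choose k-1}C(n,k+2i).
$$
Solving this linear relation for $\sum_a a\,p_{a,k}(n)$ immediately produces the stated formula.

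I expect no genuine obstacle here, since the entire combinatorial content has already been established in the preceding proposition; the corollary merely isolates the exceedance-counting term from the weighted identity. The only point warranting a moment of care is to confirm that all the sums over $a$ are finite and that the rearrangement is valid term by term: for fixed $n$ and $k$ the support of $p_{a,k}(n)$ in $a$ is finite (indeed $0\le a\le n-1$, and moreover $a$ is bounded by the anti-exceedance constraints forcing $a\le n-k$), so distributing the coefficient and splitting the sum is unproblematic, and the final rearrangement is a single linear step.
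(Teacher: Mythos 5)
Your proposal is correct and matches the paper's own derivation: the paper likewise notes $\sum_a p_{a,k}(n)=C(n,k)$ and that $\sum_a a\,p_{a,k}(n)$ counts all exceedances, and then obtains the corollary by exactly the rearrangement of Eq.~\eqref{9eq1} you describe. No gap; the finiteness remark is a harmless extra precaution.
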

		However, it is easy to compute the total number of exceedances as shown below.
		\begin{proposition}\label{3pro1}
			The total number of exceedances in all permutations on $[n]$ with $k$ cycles is ${n\choose 2}C(n-1,k)$.
		\end{proposition}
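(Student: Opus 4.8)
The plan is to count exceedances directly by double counting, rather than to feed the recurrence of Corollary~\ref{3cor1}. The total number of exceedances over all permutations with $k$ cycles is the number of pairs $(\pi,i)$ with $C(\pi)=k$ and $i<\pi(i)$. I would reorganize this count by recording the value $j=\pi(i)$: setting $i<\pi(i)=j$, the total equals $\sum_{1\le i<j\le n}N_{i,j}$, where $N_{i,j}$ is the number of permutations $\pi$ on $[n]$ with $C(\pi)=k$ and $\pi(i)=j$. The goal is then to show $N_{i,j}=C(n-1,k)$ for every pair with $i\neq j$, after which summing over the $\binom{n}{2}$ pairs $i<j$ finishes the argument.

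The key step is a deletion bijection establishing $N_{i,j}=C(n-1,k)$. Given $\pi$ with $\pi(i)=j$ and $i\neq j$, the cycle through $i$ has length at least two and reads $(\cdots\, a\; i\; j\, \cdots)$ with $a=\pi^{-1}(i)$. I would delete the letter $i$, that is, restrict to the ground set $[n]\setminus\{i\}$ and let the predecessor $a$ point directly to $j$, producing a permutation $\pi'$ of an $(n-1)$-element set. Conversely, from any permutation $\pi'$ of $[n]\setminus\{i\}$ I would reinsert $i$ immediately before $j$: with $a=(\pi')^{-1}(j)$, set $\pi(a)=i$, $\pi(i)=j$, and $\pi=\pi'$ elsewhere. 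These maps are mutually inverse, so $N_{i,j}$ equals the number of permutations of $[n]\setminus\{i\}$ with $k$ cycles, namely $C(n-1,k)$.

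Assembling the pieces, the total number of exceedances is
\[
\sum_{1\le i<j\le n}N_{i,j}=\binom{n}{2}\,C(n-1,k),
\]
which is the claimed formula.

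The part requiring care is verifying that the deletion preserves the cycle count exactly: because $\pi(i)=j\neq i$, the cycle containing $i$ has length at least two, so removing the single element $i$ merely shortens that cycle rather than destroying it, and hence $C(\pi')=C(\pi)=k$. The only boundary case for the inverse is when $j$ is a fixed point of $\pi'$; then $a=j$ and the reinsertion creates the transposition $(i\; j)$, which the construction handles uniformly. I would also remark that $N_{i,j}$ is independent of the chosen pair (this follows from conjugation-invariance, or directly from the bijection, which never uses the order of $i$ and $j$), so no separate symmetry lemma is needed beyond the bijection itself.
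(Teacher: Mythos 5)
Your proof is correct and is essentially the paper's own argument: both count exceedances by distinguishing the pair $(i,\pi(i))$ and then apply a deletion/insertion bijection that removes one element of that pair from its cycle, preserving the number of cycles, so that the count becomes $\binom{n}{2}C(n-1,k)$. The only cosmetic differences are that the paper deletes the image $\pi(i)$ (and relabels to get a permutation on $[n-1]$, carrying the $2$-subset $\{i,\pi(i)\}$ as part of the bijection's output), whereas you delete the preimage $i$ and instead fix the pair $(i,j)$ in advance, summing the resulting constant count $N_{i,j}=C(n-1,k)$ over the $\binom{n}{2}$ pairs.
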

		\begin{proof} Note the total number of exceedances in all permutations on $[n]$ with $k$ cycles
			is equal to the size of the set $X$ of permutations $\pi$ on $[n]$ with $k$ cycles and
			with one pair $(i,\pi(i))$ distinguished, where $i$ is an exceedance in $\pi$.
			Let $Y$ denote the set of pairs $(\tau,\alpha)$, where $\tau$ is a subset of $[n]$ having $2$ elements
			and $\alpha$ is a permutation on $[n-1]$ having $k$ cycles. We will show that there is a bijection between
			$X$ and $Y$.
			Given $(\pi, (i,\pi(i)))\in X$, we obtain $(\tau,\alpha)\in Y$ as follows: set $\tau=\{i,\pi(i)\}$ and
			$\alpha'$ on $[n]\setminus\{\pi(i)\}$ as $\alpha'(j)=\pi(j)$ if $j\neq i$ while $\alpha'(i)=\pi^2(i)$.
			Now we obtain $\alpha$ from $\alpha'$ by substituting $x-1$ for every number $x>\pi(i)$.
			Conversely, given $(\tau,\alpha)\in Y$, where $\tau=\{a,b\}$ and $a<b$. Define $\alpha'$ from $\alpha$
			by substituting $x+1$ for every number $x\geq b$. Next we define $\pi$ from $\alpha'$ in
			the following way:
			$\pi(j)=\alpha'(j)$ if $j\neq a, b$ while $\pi(a)=b$ and $\pi(b)=\alpha'(a)$.
			Note that by construction $a$ is an exceedance in $\pi$ and clearly, $|Y|={n\choose 2}C(n-1,k)$,
			whence the proposition. \qed
		\end{proof}

		Corollary~\ref{3cor1} and Proposition~\ref{3pro1} give rise to a new recurrence for the
		unsigned Stirling numbers of the first kind $C(n,k)$.
		\begin{theorem}\label{3thm4} For $n\geq 1, k\geq 1$, we have
			\begin{equation}\label{3eq4}
				C(n+1,k)=\sum_{i\geq 1}{k+2i\choose k-1}\frac{C(n+1,k+2i)}{n+1-k}+{n+1\choose 2}\frac{C(n,k)}{n+1-k}.
			\end{equation}
		\end{theorem}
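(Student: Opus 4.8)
The plan is to read off the theorem by equating two independent evaluations of a single quantity, namely the total number of exceedances among all permutations of $[n]$ possessing exactly $k$ cycles. Corollary~\ref{3cor1} expresses this number, via the plane-permutation recurrence, as
\[
(n-k)C(n,k)-\sum_{i=1}^{\lfloor (n-k)/2\rfloor}{k+2i\choose k-1}C(n,k+2i),
\]
whereas Proposition~\ref{3pro1}, through its elementary bijection, evaluates the very same number to the compact form ${n\choose 2}C(n-1,k)$. Since both count the same set, the first step is simply to set the two expressions equal, which produces an identity valid for all $n\geq 1$ and $k\geq 1$.

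Next I would perform the substitution $n\mapsto n+1$ throughout this equality, obtaining
\[
{n+1\choose 2}C(n,k)=(n+1-k)C(n+1,k)-\sum_{i=1}^{\lfloor (n+1-k)/2\rfloor}{k+2i\choose k-1}C(n+1,k+2i).
\]
Then I would isolate the term $(n+1-k)C(n+1,k)$ on one side and divide by $n+1-k$, which yields precisely Eq.~\eqref{3eq4}. To justify rewriting the finite sum as an open-ended $\sum_{i\geq 1}$, I would invoke the vanishing $C(n+1,k+2i)=0$ whenever $k+2i>n+1$, so that the explicit upper limit $\lfloor (n+1-k)/2\rfloor$ becomes automatic.

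The argument is essentially free of obstacles; the only care required is bookkeeping around the divisor. For the stated formula to be meaningful one needs $n+1-k\neq 0$, i.e.\ $k\leq n$, which is in any case the only nontrivial range, since $C(n+1,k)=0$ for $k>n+1$ while the boundary case $k=n+1$ gives $C(n+1,n+1)=1$ against an empty recurrence. Thus the entire proof reduces to combining Corollary~\ref{3cor1} with Proposition~\ref{3pro1}, shifting the index $n\mapsto n+1$, and dividing by $n+1-k$.
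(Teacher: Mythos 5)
Your proposal is correct and is exactly the paper's argument: the paper derives Theorem~\ref{3thm4} precisely by equating the two counts of total exceedances from Corollary~\ref{3cor1} and Proposition~\ref{3pro1}, shifting $n\mapsto n+1$, and dividing by $n+1-k$. Your added remarks on extending the sum to $i\geq 1$ via vanishing Stirling numbers and on the divisor $n+1-k$ are careful bookkeeping that the paper leaves implicit.
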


		Reformulating Eq.~\eqref{3eq4}, we obtain
		\begin{equation}\label{3eq5}
			\frac{2C(n+1,k)}{n(n+1)}=\sum_{i\geq 1}{k+2i\choose k-1}\frac{1}{n+1-k}\frac{2C(n+1,k+2i)}{n(n+1)}+\frac{C(n,k)}{n+1-k}.
		\end{equation}

		Comparing Eq.~\eqref{3eq2} and Eq.~\eqref{3eq5}, we observe that $\frac{2}{n(n+1)}C(n+1,k)$
		and $\xi_{1,k}(n)$ satisfy the same recurrence. Furthermore, the
		initial value $\xi_{1,n}(n)$ is equal to the number of different ways to factorize
		an $n$-cycle into an $n$-cycle and a permutation with $n$ cycles. Since only the identity map
		has $n$ cycles, we have $\xi_{1,n}(n)=1$. On the other hand, $C(n+1,n)$ is the number of
		permutations on $[n+1]$ with $n$ cycles. Such permutations have cycle-type $1^{n-1}2^1$.
		It suffices to determine the $2$-cycle, which is equivalent to selecting $2$ elements from $[n+1]$.
		Therefore, the initial value $\frac{2}{n(n+1)}C(n+1,n)=\frac{2}{n(n+1)}{n+1\choose 2}=1$.
		Thus, $\frac{2}{n(n+1)}C(n+1,k)$ and $\xi_{1,k}(n)$ agree on the initial values.
		So, we have
		
		\begin{proposition}[Zagier~\cite{zag}, Stanley~\cite{stan3}]\label{3cor3}
			For $k\geq 1$, $n\geq 1$ and $n-k$ even, we have
			\begin{equation}
				\xi_{1,k}(n)=\frac{2}{n(n+1)}C(n+1,k).
			\end{equation}
		\end{proposition}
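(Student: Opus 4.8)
The plan is to prove the identity by exhibiting both sides, viewed as functions of $k$ for fixed $n$ subject to $n-k$ even, as solutions of one and the same recurrence together with a common boundary value, and then to invoke uniqueness of that solution. Concretely, I would first divide the recurrence of Eq.~\eqref{3eq2} through by $n+1-k$, obtaining
\[
\xi_{1,k}(n)=\frac{1}{n+1-k}\sum_{i\geq 1}\binom{k+2i}{k-1}\xi_{1,k+2i}(n)+\frac{C(n,k)}{n+1-k},
\]
and observe that the reformulated Eq.~\eqref{3eq5} asserts exactly the same relation with $\xi_{1,\cdot}(n)$ replaced by $g(\cdot):=\frac{2}{n(n+1)}C(n+1,\cdot)$. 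Thus both sequences $(\xi_{1,k}(n))_k$ and $(g(k))_k$ satisfy the linear system
\[
f(k)=\frac{1}{n+1-k}\sum_{i\geq 1}\binom{k+2i}{k-1}f(k+2i)+\frac{C(n,k)}{n+1-k}.
\]

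The structural feature I would exploit is that this recurrence is triangular: it expresses $f(k)$ solely in terms of the higher-index values $f(k+2),f(k+4),\dots$, while the coefficient $n+1-k$ is strictly positive for every $k\leq n$. Hence the whole sequence is pinned down once its top value is fixed, and I would argue by reverse induction on $k$ along the progression $k\equiv n\pmod 2$, starting at the largest admissible index $k=n$ and descending in steps of $2$.

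For the base case $k=n$ every summand on the right carries index $k+2i\geq n+2>n$. Since no $n$-cycle $\omega$ can make $\omega(1~2~\cdots~n)$ have more than $n$ cycles we have $\xi_{1,k+2i}(n)=0$, and likewise $C(n+1,k+2i)=0$ forces $g(k+2i)=0$; the sum therefore collapses and leaves $f(n)=C(n,n)/(n+1-n)=1$ for both sequences. This matches the direct evaluations $\xi_{1,n}(n)=1$ (only the identity has $n$ cycles) and $g(n)=\frac{2}{n(n+1)}\binom{n+1}{2}=1$. With equal boundary values and an identical recurrence, reverse induction then yields $\xi_{1,k}(n)=g(k)$ for all $k$ with $n-k$ even, which is the assertion.

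I do not expect a genuine obstacle, since the two recurrences of Eq.~\eqref{3eq2} and Eq.~\eqref{3eq5} and all the combinatorial input have already been secured; the one point requiring care is the clean justification that the recurrence actually determines the entire sequence. This rests on two facts I would record explicitly: that the sum over $i\geq 1$ is finite and supported on indices strictly above $k$, so the induction is well founded and the index ranges on the two sides agree, and that $n+1-k>0$ throughout the relevant range so that no division by zero occurs. Once these are in place, the coincidence of boundary values completes the argument.
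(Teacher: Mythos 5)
Your proposal is correct and follows essentially the same route as the paper: both compare Eq.~\eqref{3eq2} with the reformulated Stirling recurrence Eq.~\eqref{3eq5}, verify the common value $\xi_{1,n}(n)=1=\tfrac{2}{n(n+1)}C(n+1,n)$, and conclude by descending induction in steps of two. The only difference is that you spell out explicitly the triangularity and well-foundedness of the recursion (finiteness of the sum, vanishing of terms with index above $n$, positivity of $n+1-k$), which the paper leaves implicit.
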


\section{Transposition distance of permutations}

{
In Bioinformatics, comparative study of genome sequences is a very important tool to understand evolution.
In particular, the problem of determining the minimum number of certain operations required to transform
one of two given genome sequences into the other, is extensively studied.
Combinatorially, this problem can be formulated as sorting a given permutation (or sequence)
to the identity permutation by certain priori prescribed operations, in a minimum number of steps.
This minimum number is called the distance of the permutation to be sorted w.r.t. the operations chosen.
Common operations studied are transpositions~\cite{bul,pev1,chri1,eli,labarre2},
block-interchanges~\cite{bona,chri1,chri2,hhtl,lin} and
reversals~\cite{yan,braga,pev2,capr,pev4,pev3}, etc.

There were two main existing approaches to study these distance problems:
one is based on graph models, e.g., cycle-graphs and breakpoint graphs~\cite{pev1,pev2,pev3,chri1}, where properties of these graphs (e.g., cycle decomposition) were used to characterize the distances;
the other  is based on permutation group theory, see for instance~\cite{feimei,hhtl,huanglu,labarre2,Meid}, where operations (e.g., transpositions) were modeled as short cycles (e.g., $2$-cycles, $3$-cycles) acting on a permutation induced by the sequence to be sorted so that the distance of the sequence can be obtained by computing the number of short cycles needed from the permutation group theory.

In the rest of sections, we will study the transposition distance and block-interchange distance for
permutations, as well as the reversal distance for signed permutations using a unified plane permutation framework.
The motivation comes from the following observation:
for a given one-line permutation $s$, if we view it as a part of a cycle $\bar{s}$ coming from a plane permutation
$(\bar{s},\gamma)$, then any swap of two segments in $s$ induces a transposition action on the plane permutation $(\bar{s}, \gamma)$. 
Moreover, we have the freedom of choosing $\gamma$,
which allows us to study distance problems by solving optimization problems instead of relying on certain ad hoc constructions
(like cycle-graphs and breakpoint graphs).

Along this line, we will obtain general lower bounds for the transposition distance and the block-interchange distance. Comparing with existing lower bounds (which will be made explicit later), it turns out that these existing lower bounds are equivalent to evaluations
at a particular $\gamma$. 
}

Let us start to look at the transposition distance of permutations. Given a sequence (one-line permutation) on $[n]$
$$
s=a_1\cdots a_{i-1}a_i\cdots a_ja_{j+1}\cdots a_k a_{k+1}\cdots a_{n},
$$
a transposition action on $s$ means to change $s$ into
$$
s'=a_1\cdots a_{i-1}a_{j+1}\cdots a_k a_i\cdots a_j a_{k+1}\cdots a_{n},
$$
for some $1\leq i\leq j<k\leq n$. Let $e_n=123\cdots n$.
The transposition distance of a sequence $s$ on $[n]$ is the minimum number of transpositions
needed to sort $s$ into $e_n$. Denote this distance as $td(s)$.

Let $C(\pi)$,~$C_{odd}(\pi)$ and $C_{ev}(\pi)$ denote the number of cycles, the number of odd cycles
and the number of even cycles in $\pi$, respectively. Furthermore, let $[n]^*=\{0,1,\ldots, n\}$, and
$$
\hat{e}_n=(0~1~2~3~\cdots ~n), \quad \bar{s}=(0 ~a_1 ~a_2 ~\cdots ~a_n), \quad p_t=(n ~n-1 ~\ldots ~1 ~0).
$$

\begin{theorem}\label{5thm1}
\begin{eqnarray}
td(s)\geq \max_{\gamma}\left\{\frac{\max\{|C(p_t\bar{s}\gamma)-C(\gamma)|,|C_{odd}(p_t\bar{s}\gamma)-C_{odd}(\gamma)|,
|C_{ev}(p_t\bar{s}\gamma)-C_{ev}(\gamma)|\}}{2}\right\},
\end{eqnarray}
where $\gamma$ ranges over all permutations on $[n]^*$.
\end{theorem}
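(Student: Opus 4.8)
The plan is to transport the transposition distance problem into the plane permutation framework and then apply the $\{-2,0,2\}$ estimate of Lemma~\ref{2lem3}. First I would fix an arbitrary permutation $\gamma$ on $[n]^*$ and form the plane permutation $\mathfrak{p}=(\bar{s},\gamma)$, whose diagonal is $D_{\mathfrak{p}}=\bar{s}\gamma^{-1}$ by the Observation $D_{\mathfrak{p}}=s\pi^{-1}$. The key structural remark is that a transposition acting on the sequence $s$ — swapping two adjacent segments of $a_1\cdots a_n$ — is precisely a transpose $\chi_h$ acting on the top row of $\mathfrak{p}$, with $h=(i,j,j+1,k)$, since $0=s_0$ sits at the front and is never moved (so the indices lie in $\{1,\dots,n\}$). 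Because every transpose leaves the diagonal invariant via $\pi^h=D_{\mathfrak{p}}^{-1}s^h$, sorting $s$ into $e_n$ by $t=td(s)$ transpositions corresponds to applying $t$ transposes that carry $\mathfrak{p}$ to $(\hat{e}_n,\pi')$ with the same diagonal, where $\pi'=D_{\mathfrak{p}}^{-1}\hat{e}_n=\gamma\bar{s}^{-1}\hat{e}_n$.

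Next I would track how the cycle statistics of the vertical permutation evolve. By Lemma~\ref{2lem3}, a single transpose changes each of $C$, $C_{odd}$, $C_{ev}$ of the vertical by an element of $\{-2,0,2\}$; telescoping across the $t$ steps gives $|C(\pi')-C(\gamma)|\le 2t$, together with the analogous inequalities for $C_{odd}$ and $C_{ev}$. It then remains to identify $\pi'$ with the permutation in the statement. Since $p_t=\hat{e}_n^{-1}$, taking inverses yields $(\gamma\bar{s}^{-1}\hat{e}_n)^{-1}=p_t\bar{s}\gamma^{-1}$, and as the cycle type (hence each of $C$, $C_{odd}$, $C_{ev}$) is invariant under inversion, I obtain $C(\pi')=C(p_t\bar{s}\gamma^{-1})$ and likewise for the odd and even counts.

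Combining these, for every $\gamma$ I get $|C(p_t\bar{s}\gamma^{-1})-C(\gamma)|\le 2\,td(s)$ and the two parity-refined versions. Using $C(\gamma)=C(\gamma^{-1})$ (and the same for $C_{odd},C_{ev}$) together with the fact that $\gamma\mapsto\gamma^{-1}$ is a bijection on the permutations of $[n]^*$, I would replace $\gamma^{-1}$ by $\gamma$ to rewrite these as $|C(p_t\bar{s}\gamma)-C(\gamma)|\le 2\,td(s)$, valid for all $\gamma$, and similarly for the odd and even cycle counts. Taking the maximum of the three quantities and then the maximum over $\gamma$ delivers the claimed bound.

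I expect the main obstacle to be the first step: pinning down precisely that a sequence transposition is a diagonal-preserving transpose on $(\bar{s},\gamma)$ and correctly computing the terminal vertical $\pi'=\gamma\bar{s}^{-1}\hat{e}_n$. Once this dictionary is secured, the remainder is bookkeeping — the telescoping estimate is immediate from Lemma~\ref{2lem3}, and the passage from $\gamma^{-1}$ to $\gamma$ is only a relabeling. A secondary point to treat carefully is that each inequality is derived for the fixed initial $\gamma$ but, $\gamma$ being arbitrary, may subsequently be optimized freely, which is exactly what the outer maximum records.
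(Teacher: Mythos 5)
Your proposal is correct and follows essentially the same route as the paper's proof: embed $s$ in the plane permutation $(\bar{s},\gamma)$, observe that sequence transpositions become diagonal-preserving transposes landing at $(\hat{e}_n,\gamma\bar{s}^{-1}\hat{e}_n)$, apply Lemma~\ref{2lem3} to bound the change in $C$, $C_{odd}$, $C_{ev}$ per step, and identify $C(\gamma\bar{s}^{-1}\hat{e}_n)=C(p_t\bar{s}\gamma^{-1})$ before optimizing over the free $\gamma$. Your treatment of the substitution $\gamma^{-1}\mapsto\gamma$ via inversion-invariance of cycle counts is in fact slightly more explicit than the paper's.
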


\begin{proof} 
For an arbitrary permutation $\gamma$ on $[n]^*$, $\mathfrak{p}=(\bar{s},\gamma)$ is a plane permutation.
By construction, each transposition on the sequence $s$ induces a transpose on $\mathfrak{p}$.
(The auxiliary element $0$ is used to handle the case where $a_1$ is contained in the first
block of the transpositions, because the bottom row of a transposition action on a plane permutation
is forward shifted.)
If $s$ changes to $e_n$ by a series of transpositions, we have, for some $\beta$, that $\mathfrak{p}$ changes into the
plane permutation $(\hat{e}_n, \beta)$.
By construction, we have
$$
D_{\mathfrak{p}}=\bar{s}\gamma^{-1}=\hat{e}_n\beta^{-1},
$$
and accordingly
$$
\beta=\gamma {\bar{s}}^{-1}\hat{e}_n.
$$
Since each transpose changes the number of cycles
by at most $2$ according to Lemma~\ref{2lem3},
at least $\frac{|C(\gamma {\bar{s}}^{-1}\hat{e}_n)-C(\gamma)|}{2}=
\frac{|C(p_t\bar{s}\gamma^{-1})-C(\gamma^{-1})|}{2}$
transposes are needed from $\gamma$ to $\beta$.
The same argument also applies to deriving the lower bounds in terms of odd and even cycles, respectively.
Note that $\gamma$ can be arbitrarily selected, then the proof follows.  \qed
\end{proof}

In this general formulation of Theorem~\ref{5thm1}, 
setting $\gamma=(p_t\bar{s})^{-1}$ we immediately obtain
\begin{corollary}\label{cor-bp}
\begin{eqnarray}
td(s)& \geq & \frac{n+1-C(p_t\bar{s})}{2},\\
td(s)& \geq & \frac{n+1-C_{odd}(p_t\bar{s})}{2}
\end{eqnarray}
\end{corollary}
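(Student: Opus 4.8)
The plan is to specialize the free parameter $\gamma$ in Theorem~\ref{5thm1} to the single choice $\gamma=(p_t\bar{s})^{-1}$ and simply read off the resulting bounds. First I would check that this $\gamma$ is a legitimate permutation on $[n]^*$: since $p_t$ and $\bar{s}$ are both permutations on the $(n+1)$-element set $[n]^*$, so is their product, and hence so is its inverse. The key structural fact I will exploit is that the maximum over all $\gamma$ in Theorem~\ref{5thm1} dominates the value at any particular $\gamma$, and that the outer maximum over the three cycle-type quantities dominates each individual term; consequently, evaluating at a single fixed $\gamma$ and retaining only one of the three terms already furnishes a valid lower bound for $td(s)$.

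Next I would compute $p_t\bar{s}\gamma$ for this choice. By construction $p_t\bar{s}\gamma=p_t\bar{s}(p_t\bar{s})^{-1}=\mathrm{id}$, the identity permutation on $[n]^*$. The identity on an $(n+1)$-element set consists of $n+1$ fixed points, so $C(\mathrm{id})=n+1$ and, since each fixed point is a $1$-cycle (hence an odd cycle), also $C_{odd}(\mathrm{id})=n+1$. On the other hand, passing to the inverse preserves all cycle lengths, so $C(\gamma)=C((p_t\bar{s})^{-1})=C(p_t\bar{s})$ and likewise $C_{odd}(\gamma)=C_{odd}(p_t\bar{s})$. Substituting, the first term inside the maximum becomes $|C(p_t\bar{s}\gamma)-C(\gamma)|=|(n+1)-C(p_t\bar{s})|$, and the odd-cycle term becomes $|(n+1)-C_{odd}(p_t\bar{s})|$. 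Since any permutation on $[n]^*$ has at most $n+1$ cycles, both $C(p_t\bar{s})$ and $C_{odd}(p_t\bar{s})$ are at most $n+1$, so the absolute value signs may be dropped; dividing by $2$ and invoking the inequality chain of Theorem~\ref{5thm1} then yields the two claimed bounds.

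I do not expect a genuine obstacle here: the corollary is a direct substitution into the already-established Theorem~\ref{5thm1}, and the only points needing a moment's care are the two elementary observations—that the identity on $[n]^*$ has exactly $n+1$ cycles, all odd, and that inversion leaves the cycle-type (and therefore both $C$ and $C_{odd}$) unchanged. These are precisely what convert the abstract optimum of Theorem~\ref{5thm1} into the explicit breakpoint-type lower bounds, and they are what I would make explicit in the written proof.
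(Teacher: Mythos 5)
Your proposal is correct and follows exactly the paper's own route: the paper obtains Corollary~\ref{cor-bp} precisely by setting $\gamma=(p_t\bar{s})^{-1}$ in Theorem~\ref{5thm1}. You merely spell out the elementary details the paper leaves implicit (the identity on $[n]^*$ has $n+1$ cycles, all odd, and inversion preserves cycle type), which is a faithful expansion of the same argument.
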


The most common graph model used to study transposition distance is cycle-graph proposed by Bafna and Pevzner~\cite{pev1}.
Given a permutation $s=s_1s_2\cdots s_n$ on $[n]$, the cycle graph $G(s)$ of $s$ is obtained
as follows:
add two additional elements $s_0=0$ and $s_{n+1}=n+1$. The vertices of $G(s)$ are the elements in $[n+1]^*$.
Draw a directed black edge from $i+1$ to $i$, and draw a directed gray edge from
$s_i$ to $s_{i+1}$, we then obtain $G(s)$.
An alternating cycle in $G(s)$ is a directed cycle, where its edges alternate in color.
An alternating cycle is called odd if the number of black edges is odd.
Bafna and Pevzner obtained lower and upper bound for $td(s)$ in terms of
the number of cycles and odd cycles of $G(s)$~\cite{pev1}.

By examining the cycle graph model $G(s)$ of a permutation $s$, it turns out
the cycle graph $G(s)$ is actually the directed graph representation of the
product $\bar{s}^{-1}p_t^{-1}$, if we identify the two auxiliary points $0$ and $n+1$.
The directed graph representation of a permutation $\pi$ is the directed graph
by drawing an directed edge from $i$ to $\pi(i)$. If we color the directed edge
of $\bar{s}^{-1}$ gray and the directed edge of $p_t^{-1}$ black, an alternating cycle
then determines a cycle of the permutation $\bar{s}^{-1}p_t^{-1}$ (thus $p_t\bar{s}$). Therefore, the number of
cycles and odd cycles in $p_t\bar{s}$ is equal to the number of
cycles and odd cycles in $G(s)$, respectively.
As a result, the lower bounds in Corollary~\ref{cor-bp} are exactly the same as the lower bounds
obtained by Bafna and Pevzner~\cite{pev1} and this relation was also derived in~\cite{labarre1,labarre2}.
In particular, in~\cite{labarre2}, this lower bound was obtained using permutations and by translating the
transposition distance of $s$ into the minimum number of $3$-cycles, $p_t\bar{s}$ can be factored into.

In view of Theorem~\ref{5thm1} we next ask: is it possible by employing an appropriate $\gamma$, to improve the lower
bounds of in Corollary~\ref{cor-bp}?
I.e.~given a permutation $\pi$, what is the maximum number of
$|C(\pi\gamma)-C(\gamma)|$ (resp. $|C_{odd}(\pi\gamma)-C_{odd}(\gamma)|$, $|C_{ev}(\pi\gamma)-C_{ev}(\gamma)|$),
where $\gamma$ ranges over a set of permutations.

More generally, we can study the distribution functions
\begin{equation}
\sum_{\gamma\in A} z^{C(\pi\gamma)-C(\gamma)},\quad \sum_{\gamma\in A} z^{C_{odd}(\pi\gamma)-C_{odd}(\gamma)},
\quad \sum_{\gamma\in A} z^{C_{ev}(\pi\gamma)-C_{ev}(\gamma)},
\end{equation}
where $A$ is a set of permutations, e.g., a conjugacy class or all permutations.
In this paper, we will later determine $\max_{\gamma}\{|C(\pi\gamma)-C(\gamma)|\}$ for
an arbitrary permutation $\pi$. Surprisingly, the maximum for this case is achieved when
$\gamma=\pi^{-1}$ or $\gamma$ is the identity permutation.
For the other two problems in terms of odd cycle and
even cycle, we are unable to solve it at present.
{
However,
the following example shows that for even cycles, the maximum is not necessarily achieved by
$\gamma=\pi^{-1}$ or $\gamma=$identity.
\begin{example}
	Suppose $p_t=(4,3,2,1,0)$ and $\bar{s}=(0,1,2,4,3)$. Consider $|C_{ev}(p_t \bar{s}\gamma)-C_{ev}(\gamma)|$.
	\begin{itemize}
		\item For $\gamma=(p_t \bar{s})^{-1}$ or $\gamma=$identity, $p_t \bar{s}=(0)(1)(2,3,4)$ so that $|C_{ev}(p_t \bar{s}\gamma)-C_{ev}(\gamma)|=0$;
		\item For $\gamma=(0)(1,3)(2,4)$, $p_t \bar{s}\gamma=(0)(1,4,3)(2)$ so that $|C_{ev}(p_t \bar{s}\gamma)-C_{ev}(\gamma)|=2$.
	\end{itemize}
\end{example} 
}

Another approach to obtain a better lower bound is fixing $\gamma$ and figuring out these unavoidable
transposes which do not increase (or decrease) the number of cycles (or odd, or even cycles) from
$\gamma$ to $\gamma {\bar{s}}^{-1}\hat{e}_n$. In particular, by setting $\gamma=p_t \bar{s}$,
it is not hard to analyze the number of ``hurdles" similar as in Christie~\cite{chri1} in the framework
of plane permutations,
which we do not go into detail here.

\section{Block-interchange distance of permutations}

A more general transposition problem, where the involved two blocks are not necessarily adjacent,
was studied in Christie~\cite{chri2}. It is referred to as the block-interchange distance problem.
The minimum number of block-interchanges needed to sort $s$ into $e_n$ is accordingly called the
block-interchange distance of $s$ and denoted as $bid(s)$.

{
Following Lemma~\ref{2lem5} and the same reasoning as in the proof of Theorem~\ref{5thm1}, we immediately obtain
\begin{align}\label{gen-bid}
bid(s)\geq \frac{\max_{\gamma}\{|C(p_t\bar{s} \gamma)-C(\gamma)|\}}{2},
\end{align}
where $\gamma$ ranges over all permutations on $[n]^*$.
Christie~\cite{chri2} proved an exact formula for the block-interchange distance which implies that the maximum of the RHS of eq.~\eqref{gen-bid} is achieved by $\gamma=(p_t \bar{s})^{-1}$. For completeness, we give a simple proof of this fact here.
}
\begin{lemma}\label{5lem1}
Let $\mathfrak{p}=(\bar{s},\pi)$ be a plane permutation on $[n]^*$
where $D_{\mathfrak{p}}=p_t^{-1}$ and $\bar{s}\neq \hat{e}_n$.
Then, there exist $\bar{s}_{i-1}<_{\bar{s}} \bar{s}_j <_{\bar{s}} \bar{s}_{k-1} \leq_{\bar{s}} \bar{s}_l$ such that
$$
\pi(\bar{s}_{i-1})=\bar{s}_{k-1}, \quad \pi(\bar{s}_l)=\bar{s}_j.
$$
\end{lemma}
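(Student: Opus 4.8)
The plan is to read the statement through the order $<_{\bar s}$ and reduce it to a clean crossing-type claim about $\pi$, then produce the crossing by a single extremal choice. Identify each element with its index in $\bar s$, so that $<_{\bar s}$ becomes the usual order on $\{0,\dots,n\}$ and $\pi$ is encoded by the permutation $\sigma$ with $\pi(\bar s_m)=\bar s_{\sigma(m)}$. In this language the desired indices $i-1<j<k-1\le l$ with $\pi(\bar s_{i-1})=\bar s_{k-1}$ and $\pi(\bar s_l)=\bar s_j$ amount to finding $a<b<c\le d$ with $\sigma(a)=c$ and $\sigma(d)=b$ (put $a=i-1,\ b=j,\ c=k-1,\ d=l$). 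The one input from the hypothesis $D_{\mathfrak p}=p_t^{-1}$ is recorded first: the diagonal relation gives $\pi(\bar s_m)=D_{\mathfrak p}^{-1}(\bar s_{m+1})=p_t(\bar s_{m+1})=\bar s_{m+1}-1$ (element values mod $n+1$), so $\pi$ never sends $\bar s_m$ to its $\bar s$-successor $\bar s_{m+1}$; equivalently $\sigma(m)\ne m+1$ for all $m$. Consequently, whenever $m$ is an exceedance ($\sigma(m)>m$) we in fact have $\sigma(m)\ge m+2$, so the open interval $(m,\sigma(m))$ is nonempty.

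Next I would make the extremal choice. Since $\bar s\ne\hat e_n$ we have $\sigma\ne\mathrm{id}$, so $\sigma$ has at least one exceedance (the minimum of any nontrivial cycle is an exceedance). Among all exceedances $m$, pick $m^*$ whose image $r^*:=\sigma(m^*)$ is \emph{smallest}, and set $I=(m^*,r^*)$, which is nonempty by the previous paragraph. I claim there exists $d\ge r^*$ with $\sigma(d)\in I$; granting this, $a=m^*,\ c=r^*,\ b=\sigma(d)$ satisfy $m^*<b<r^*\le d$ together with $\sigma(a)=c$ and $\sigma(d)=b$, which is exactly the configuration sought (the equality $c=d$ is permitted, matching $\bar s_{k-1}\le_{\bar s}\bar s_l$).

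The crux, and the only real obstacle, is ruling out the alternative that no preimage of $I$ reaches $r^*$, i.e.\ that $I$ is $\sigma$-invariant; the minimality of $r^*$ is precisely what breaks this. Suppose no $d\ge r^*$ maps into $I$. For $q\in I$ let $e=\sigma^{-1}(q)$: if $e<q$ then $e$ is an exceedance with $\sigma(e)=q<r^*$, contradicting the minimality of $r^*$, so $e\ge q>m^*$; since by assumption $e<r^*$, we get $e\in I$. Thus $\sigma^{-1}(I)\subseteq I$, so $I$ is invariant, and $\sigma(e)=q\le e$ shows no element of $I$ is an exceedance. An invariant interval with no exceedance can contain no nontrivial cycle (such a cycle would have an exceedant minimum), so every $p\in I$ is a fixed point. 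But $\sigma(p)=p$ forces $\bar s_{p+1}=\bar s_p+1$, which telescopes across $I$ to $\bar s_{r^*}=\bar s_{m^*+1}+(r^*-m^*-1)$, whereas $\sigma(m^*)=r^*$ gives $\bar s_{r^*}=\bar s_{m^*+1}-1$; subtracting yields $r^*-m^*\equiv0\pmod{n+1}$, impossible since $1\le r^*-m^*\le n$. This contradiction proves the claim and hence the lemma. I expect the only places needing care to be the bookkeeping with $<_{\bar s}$ and the mod-$(n+1)$ value arithmetic; the structural content is entirely in the extremal choice of $m^*$ together with the ``no fully-fixed interval'' telescoping.
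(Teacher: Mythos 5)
Your proof is correct, and it takes a genuinely different route from the paper's. The paper argues directly on \emph{values}: since $\bar{s}\neq\hat{e}_n$ there is a largest integer $x$ with $x+1<_{\bar{s}}x$; writing $\bar{s}_i=x+1$, the diagonal condition forces $\pi(\bar{s}_{i-1})=x=\bar{s}_{k-1}$, and then the largest integer $y>x$ occurring between $\bar{s}_{i-1}$ and $x$ serves as $\bar{s}_j$, with $y+1=\bar{s}_{l+1}$ shown (via maximality of $x$ and of $y$) to lie after $x$, so that $\pi(\bar{s}_l)=y$; both pairs are exhibited explicitly and no contradiction is needed. You instead argue on \emph{positions}: you take the exceedance $m^*$ of the index permutation $\sigma$ with smallest image $r^*$, get the pair $\sigma(m^*)=r^*$ for free, and produce the second pair by contradiction --- if no $d\geq r^*$ mapped into $(m^*,r^*)$, that interval would be $\sigma$-invariant and exceedance-free, hence all fixed points, and telescoping $\bar{s}_{p+1}=\bar{s}_p+1$ against $\bar{s}_{r^*}=\bar{s}_{m^*+1}-1$ gives $r^*-m^*\equiv 0\pmod{n+1}$, impossible. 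Both proofs are extremal arguments exploiting the hypothesis in the same form ($\pi(\bar{s}_m)=\bar{s}_{m+1}-1$ modulo $n+1$), but the extremal quantity differs (largest value-descent versus smallest exceedance image), and your second pair is non-constructive where the paper's is explicit. Your version buys uniform handling of the wrap-around: in the paper's argument, when $y=n$ the displayed inequality $\bar{s}_{k-1}<_{\bar{s}}y+1$ fails as literally written (one must read $y+1$ cyclically as $0$ and take $l=n$, which still yields the conclusion), a subtlety your mod-$(n+1)$ bookkeeping never encounters; your invariant-interval observation is also a nice reusable statement. The paper's version buys brevity and an explicit, algorithmically usable identification of the four entries, which is how Theorem~\ref{5thm2} extracts a cycle-increasing block-interchange.
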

\begin{proof} Since $\bar{s}\neq \hat{e}_n$, there exists $x\in [n]$ such that $x+1<_{\bar{s}} x$.
Assume $x=\bar{s}_{k-1}$ is the largest such integer and let $\bar{s}_{i}=x+1$.
Then, $\pi(\bar{s}_{i-1})=x=\bar{s}_{k-1}$ since $D_{\mathfrak{p}}(\pi(\bar{s}_{i-1}))
=\pi(\bar{s}_{i-1})+1=x+1$.
Between $\bar{s}_{i-1}$ and $x$, find the largest integer which is larger than $x$.
Since $x+1$ lies between $\bar{s}_{i-1}$ and $x$, this maximum exists and we denote it by $y$. Then we
have by construction
$$
\bar{s}_{i-1}<_{\bar{s}} \bar{s}_j=y <_{\bar{s}} x= \bar{s}_{k-1} <_{\bar{s}} y+1=\bar{s}_{l+1}.
$$
Therefore, $\pi(\bar{s}_{l})=D_{\mathfrak{p}}^{-1}(y+1)=y=\bar{s}_j$, whence the lemma. \qed
\end{proof}

Then, we obtain
\begin{theorem}[Christie~\cite{chri2}]\label{5thm2}
\begin{equation}
bid(s)= \frac{n+1-C(p_t\bar{s})}{2}.
\end{equation}
\end{theorem}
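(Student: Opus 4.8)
The plan is to prove two matching inequalities. The lower bound is essentially already in hand: specializing eq.~\eqref{gen-bid} to $\gamma=(p_t\bar{s})^{-1}$ makes $p_t\bar{s}\gamma$ the identity on $[n]^*$, which has $n+1$ cycles, while $C(\gamma)=C(p_t\bar{s})$; hence $bid(s)\geq \frac{n+1-C(p_t\bar{s})}{2}$. So the real work is the reverse inequality, for which I would exhibit an explicit sorting sequence of exactly $\frac{n+1-C(p_t\bar{s})}{2}$ block-interchanges.

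To this end I would work with the plane permutation $\mathfrak{p}=(\bar{s},\pi)$ having diagonal $D_{\mathfrak{p}}=p_t^{-1}$, so that $\pi=p_t\bar{s}$. Every block-interchange acting on $\bar{s}$ preserves the diagonal, so the vertical permutation stays of the form $p_t\bar{s}'$ throughout, and $\bar{s}=\hat{e}_n$ (equivalently $s=e_n$) occurs exactly when $\pi=p_t\hat{e}_n=\mathrm{id}$, i.e.\ when $C(\pi)=n+1$. I would first record, via Proposition~\ref{2pro2} applied on the $(n+1)$-element ground set together with $C(p_t^{-1})=1$, that $n+1-C(p_t\bar{s})$ is even, so that a count in steps of $+2$ is consistent.

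The core step is to show that whenever $\bar{s}\neq\hat{e}_n$ there is a block-interchange raising $C(\pi)$ by exactly $2$. I would feed the indices $\bar{s}_{i-1}<_{\bar{s}}\bar{s}_j<_{\bar{s}}\bar{s}_{k-1}\leq_{\bar{s}}\bar{s}_l$ produced by Lemma~\ref{5lem1}, which satisfy $\pi(\bar{s}_{i-1})=\bar{s}_{k-1}$ and $\pi(\bar{s}_l)=\bar{s}_j$, into the block-interchange $\chi_h$ with $h=(i,j,k,l)$; note that the strict inequalities force $j+1<k$. Using the image formulas of Lemma~\ref{2lemx1} one gets $\pi^h(\bar{s}_{k-1})=\pi(\bar{s}_{i-1})=\bar{s}_{k-1}$ and $\pi^h(\bar{s}_j)=\pi(\bar{s}_l)=\bar{s}_j$, so $\bar{s}_{k-1}$ and $\bar{s}_j$ become two new fixed points (neither was fixed before, since each had a distinct preimage). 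Because $\pi(\bar{s}_{i-1})$ and $\pi(\bar{s}_l)$ land directly on the marked elements, the segments $v^i,v^l$ are empty; since $\pi(\bar{s}_{i-1})=\bar{s}_{k-1}$ and $\pi(\bar{s}_l)=\bar{s}_j$ place the four markers in either one $\pi$-cycle or two, this configuration is exactly Case~$c$ or Case~$e$ of Lemma~\ref{2lem5}, both of which are certified as $+2$ scenarios. Iterating, each move adds two cycles while preserving the diagonal, and after $\frac{n+1-C(p_t\bar{s})}{2}$ moves we reach $\pi=\mathrm{id}$, i.e.\ $s=e_n$; this yields $bid(s)\leq\frac{n+1-C(p_t\bar{s})}{2}$, and hence equality.

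The main obstacle I anticipate is the degenerate configuration $\bar{s}_{k-1}=\bar{s}_l$ permitted by the $\leq_{\bar{s}}$ in Lemma~\ref{5lem1}: then $k=l+1$ and $h=(i,j,k,l)$ is not a genuine block-interchange. I would dispatch it by replacing $\chi_h$ with the adjacent transpose $h'=(i,j,j+1,l)$; the transpose image formulas of Lemma~\ref{2lemx1} give $\pi^{h'}(\bar{s}_j)=\pi(\bar{s}_l)=\bar{s}_j$ and $\pi^{h'}(\bar{s}_l)=\pi(\bar{s}_{i-1})=\bar{s}_{k-1}=\bar{s}_l$, so the same two fixed points split off and Lemma~\ref{2lem4} (Case~$2$) again yields a $+2$ jump. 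A secondary point to check is that no spurious merging cancels the two new singletons; this is automatic because Lemmas~\ref{2lem4} and~\ref{2lem5} confine the total change to $\{-2,0,2\}$ while only the cycles through $\bar{s}_{i-1},\bar{s}_j,\bar{s}_l$ are disturbed, and two genuinely new fixed cycles have been created.
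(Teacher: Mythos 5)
Your proposal is correct and follows essentially the same route as the paper's own proof: the lower bound via eq.~\eqref{gen-bid} at $\gamma=(p_t\bar{s})^{-1}$, and the upper bound by feeding the markers from Lemma~\ref{5lem1} into a block-interchange, recognizing the strict configuration as Case~$c$ or Case~$e$ of Lemma~\ref{2lem5} and the degenerate configuration $\bar{s}_{k-1}=\bar{s}_l$ as a Case~$2$ transpose of Lemma~\ref{2lem4}, each yielding a $+2$ jump in the cycle count. Your write-up merely supplies more explicit verification (the fixed-point computations via Lemma~\ref{2lemx1} and the parity check via Proposition~\ref{2pro2}) than the paper's terser argument.
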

\begin{proof}
Let $\mathfrak{p}=(\bar{s},\pi)$ be a plane permutation on $[n]^*$
where $D_{\mathfrak{p}}=p_t^{-1}$ and $\bar{s}\neq \hat{e}_n$.
According to Lemma~\ref{5lem1}, we either have ${\bar{s}}_{i-1}<_{\bar{s}} {\bar{s}}_j <_{\bar{s}} {\bar{s}}_{k-1} <_{\bar{s}} {\bar{s}}_l$
such that we either have $\pi$-cycle
$$
({\bar{s}}_{i-1} ~{\bar{s}}_{k-1} ~\ldots ~{\bar{s}}_l ~{\bar{s}}_j ~\ldots)\quad \mbox{or} \quad ({\bar{s}}_{i-1} ~{\bar{s}}_{k-1} ~\ldots)({\bar{s}}_l ~{\bar{s}}_j ~\ldots),
$$
or ${\bar{s}}_{i-1}<_{\bar{s}} {\bar{s}}_j <_{\bar{s}} {\bar{s}}_{k-1} =_{\bar{s}} {\bar{s}}_l$ such that
we have the $\pi$-cycle $({\bar{s}}_{i-1} ~{\bar{s}}_{k-1} ~{\bar{s}}_j ~\ldots)$.
For the former case, the determined $\chi_h$ is either Case~$c$ or Case~$e$ of Lemma~\ref{2lem5}.
For the latter case, the determined $\chi_h$ is Case~$2$ transpose of Lemma~\ref{2lem4}.
Therefore, no matter which case, we can always find a block-interchange to
increase the number of cycles by $2$. Then,  arguing as in Theorem~\ref{5thm1} completes
the proof.\qed
\end{proof}

{
Theorem~\ref{5thm2} was proved in~\cite{lin} using permutations
by translating the block-interchange distance of $s$ into the minimum number of 
pairs of $2$-cycles the permutation $p_t \bar{s}$ can be factored into.
}

Furthermore, Zagier and Stanley's result mentioned earlier implies that

\begin{corollary}\label{5cor7}
Let $bid_k(n)$ denote he number of sequences $s$ on $[n]$ such that $bid(s)=k$.
Then,
\begin{equation}
bid_k(n)=\frac{2C(n+2,n+1-2k)}{(n+1)(n+2)}.
\end{equation}
\end{corollary}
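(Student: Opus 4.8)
The plan is to combine the exact block-interchange distance formula of Theorem~\ref{5thm2} with the Zagier-Stanley enumeration established in Proposition~\ref{3cor3}. By Theorem~\ref{5thm2}, $bid(s)=\frac{n+1-C(p_t\bar{s})}{2}$, so $bid(s)=k$ holds precisely when $C(p_t\bar{s})=n+1-2k$. Thus $bid_k(n)$ equals the number of sequences $s$ on $[n]$ whose associated cycle $\bar{s}=(0~a_1~\cdots~a_n)$ satisfies $C(p_t\bar{s})=n+1-2k$.

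First I would recast this count on the set $[n]^*=\{0,1,\ldots,n\}$, which has $n+1$ elements. The map $s\mapsto\bar{s}$ is a bijection between the $n!$ one-line permutations $s$ of $[n]$ and the $(n+1)$-cycles on $[n]^*$, since every $(n+1)$-cycle can be written uniquely with $0$ in the first position. Hence $bid_k(n)$ is exactly the number of $(n+1)$-cycles $\bar{s}$ on $[n]^*$ for which the product $p_t\bar{s}$ has $n+1-2k$ cycles, where $p_t=(n~n-1~\cdots~1~0)$ is a fixed $(n+1)$-cycle.

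Next I would identify this quantity with the Zagier-Stanley number on $n+1$ letters. Since $p_t\bar{s}=p_t(\bar{s}p_t)p_t^{-1}$, the permutations $p_t\bar{s}$ and $\bar{s}p_t$ are conjugate and hence have the same number of cycles, so the count is unchanged if we use $\bar{s}p_t$ instead. Moreover, the number of $(n+1)$-cycles $\omega$ for which $\omega c$ has a prescribed number of cycles depends only on the conjugacy class of the fixed permutation $c$ (conjugating all factors by a fixed $g$ permutes the set bijectively and preserves being an $(n+1)$-cycle); as $p_t$ is an $(n+1)$-cycle it may be replaced by the standard cycle $(1~2~\cdots~(n+1))$ after relabelling $[n]^*$ by $[n+1]$. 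Therefore $bid_k(n)=\xi_{1,\,n+1-2k}(n+1)$.

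Finally I would invoke Proposition~\ref{3cor3} with $n$ replaced by $n+1$ and $k$ replaced by $n+1-2k$; the parity hypothesis is satisfied because $(n+1)-(n+1-2k)=2k$ is even. This yields
\[
bid_k(n)=\xi_{1,\,n+1-2k}(n+1)=\frac{2}{(n+1)(n+2)}\,C(n+2,\,n+1-2k),
\]
as claimed. The only delicate points are pure bookkeeping: tracking the index shift $n\mapsto n+1$ forced by the auxiliary element $0$ that enlarges the ground set to $n+1$ letters, and justifying via conjugacy that the fixed cycle $p_t$ may be treated exactly as the standard cycle appearing in the definition of $\xi_{1,k}$. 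I do not expect any substantive obstacle beyond this careful parameter matching.
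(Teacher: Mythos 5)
Your proposal is correct and follows essentially the same route as the paper: use Theorem~\ref{5thm2} to reduce $bid(s)=k$ to counting $(n+1)$-cycles $\bar{s}$ with $C(p_t\bar{s})=n+1-2k$, then invoke the Zagier--Stanley formula (Proposition~\ref{3cor3}) with the parameters $n+1$ and $n+1-2k$. The extra details you supply --- the bijection $s\mapsto\bar{s}$, replacing $p_t\bar{s}$ by the conjugate product $\bar{s}p_t$, and relabelling $p_t$ as the standard cycle --- are precisely the bookkeeping the paper compresses into the phrase ``applying Zagier and Stanley's result completes the proof.''
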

\begin{proof}
Let
$$
k=bid(s)= \frac{n+1-C(p_t\bar{s})}{2}.
$$
The number of $s$ such that $bid(s)=k$ is equal to the number of permutation $\bar{s}$
such that $C(p_t \bar{s})=n+1-2k$. Then, applying Zagier and Stanley's result
completes the proof. \qed
\end{proof}

We note that the corollary above was also used by Bona and Flynn~\cite{bona} to compute the average number of
block-interchanges needed to sort permutations.

In view of the general lower bound for block-interchanges eq.~\eqref{gen-bid} and Theorem~\ref{5thm2}, we are now in position to answer
one of the optimization problems mentioned earlier.

\begin{theorem}\label{5thm3}
Let $\alpha$ be a permutation on $[n]$ and $n\geq 1$. Then we have
\begin{equation}
\max_{\gamma}\{|C(\alpha \gamma)-C(\gamma)|\}=n-C(\alpha),
\end{equation}
where $\gamma$ ranges over all permutations on $[n]$.
\end{theorem}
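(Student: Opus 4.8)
The plan is to prove the identity by establishing two matching bounds: an \emph{achievability} statement, that some $\gamma$ attains $n - C(\alpha)$, and an \emph{upper bound}, that no $\gamma$ exceeds it. The guiding observation is that $|C(\alpha\gamma) - C(\gamma)|$ measures how much the cycle count of $\gamma$ changes upon left-multiplication by $\alpha$, and that $\alpha$ factors into exactly $n - C(\alpha)$ transpositions (its minimal transposition length, i.e.\ its Cayley distance to the identity), since a $k$-cycle is a product of $k-1$ transpositions and $\sum_i(k_i - 1) = n - C(\alpha)$.

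For achievability I would evaluate at $\gamma = \mathrm{id}$: since $C(\mathrm{id}) = n$ and $\alpha\cdot\mathrm{id} = \alpha$, we get $|C(\alpha) - n| = n - C(\alpha)$ because $C(\alpha) \le n$. Equivalently, $\gamma = \alpha^{-1}$ gives $\alpha\gamma = \mathrm{id}$ and $|C(\mathrm{id}) - C(\alpha^{-1})| = n - C(\alpha)$, using $C(\alpha^{-1}) = C(\alpha)$. This yields $\max_{\gamma}|C(\alpha\gamma) - C(\gamma)| \ge n - C(\alpha)$ and explains the earlier remark that the optimum sits at the identity or at $\alpha^{-1}$.

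For the upper bound I would invoke the elementary fact that left-multiplying a permutation by a single transposition changes its number of cycles by exactly one (splitting one cycle into two, or merging two into one). Writing $\alpha = \tau_1\cdots\tau_t$ with $t = n - C(\alpha)$ and forming the chain $\gamma,\ \tau_t\gamma,\ \tau_{t-1}\tau_t\gamma,\ \ldots,\ \tau_1\cdots\tau_t\gamma = \alpha\gamma$, the cycle count moves by $\pm 1$ at each of the $t$ steps; telescoping the increments gives $|C(\alpha\gamma) - C(\gamma)| \le t = n - C(\alpha)$ for every $\gamma$. Combined with achievability, this is exactly the claimed maximum. Note that the absolute value needs no separate treatment: the telescoped sum of $t$ unit increments already bounds $|c_t - c_0|$ by $t$.

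The result is genuinely short, so there is no serious obstacle; the only ingredients to pin down are the single-transposition cycle lemma and the identity $n - C(\alpha)$ for the minimal transposition length, both standard. It is worth recording that the statement is also forced by the block-interchange machinery in the special case where $\alpha = p_t\bar{s}$ is a product of two full cycles on $[n]^*$: combining the general lower bound~\eqref{gen-bid} with Theorem~\ref{5thm2} yields $\max_{\gamma}|C(p_t\bar{s}\,\gamma) - C(\gamma)| = (n+1) - C(p_t\bar{s})$, with the maximum attained at $\gamma = (p_t\bar{s})^{-1}$, which is precisely the present theorem specialized to that family. Since those $\alpha$ do not exhaust all permutations, the direct transposition argument above is what delivers the statement for arbitrary $\alpha$.
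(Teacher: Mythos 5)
Your proof is correct, and it takes a genuinely different route from the paper. You prove the upper bound directly: factor $\alpha$ into $t=n-C(\alpha)$ transpositions, left-multiply $\gamma$ by them one at a time, and use the elementary fact that each transposition changes the cycle count by exactly $\pm 1$, so that $|C(\alpha\gamma)-C(\gamma)|\le t$ for every $\gamma$; achievability at $\gamma=\mathrm{id}$ (or $\gamma=\alpha^{-1}$) then finishes the argument. The paper instead stays inside its plane-permutation framework: it first deduces $\max_{\gamma}|C(p_t\bar{s}\gamma)-C(\gamma)|=n+1-C(p_t\bar{s})$ by combining the general lower bound \eqref{gen-bid} with Christie's exact block-interchange formula (Theorem~\ref{5thm2}), then extends this to all \emph{even} permutations via the classical fact that every even permutation on $[n]^*$ factors into two $(n+1)$-cycles (plus a conjugation to move $\beta_1$ to $p_t$), and finally handles \emph{odd} permutations by peeling off one transposition and applying the triangle inequality, with $\gamma=I$ certifying attainment. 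Your argument is more elementary and uniform -- no parity case split, no reliance on the two-long-cycles factorization theorem, no dependence on the distance machinery -- and it is self-contained; what the paper's proof buys is a demonstration that the optimization problem it raised is answered by the framework itself, reusing Theorem~\ref{5thm2} rather than introducing an outside argument. Your closing remark is also accurate: the \eqref{gen-bid}--Theorem~\ref{5thm2} combination only covers $\alpha$ of the form $p_t\bar{s}$ (equivalently, even permutations after conjugation), which is exactly why the paper needs its additional two steps, and why your direct chain argument is arguably the cleaner path to the general statement.
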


\begin{proof}
	First, from eq.~\eqref{gen-bid} and Theorem~\ref{5thm2}, we have:
for arbitrary $s$,
\begin{equation}
\max_\gamma\{|C(p_t\bar{s} \gamma)-C(\gamma)|\}=n+1-C(p_t\bar{s}),
\end{equation}
where $\gamma$ ranges over all permutations on $[n]^*$.

We now use the fact that any even permutation $\alpha'$ on $[n]^*$ has a factorization into
two $(n+1)$-cycles. Assume $\alpha'=\beta_1\beta_2$ where $\beta_1, \beta_2$ are two
$(n+1)$-cycles, and $p_t=\theta \beta_1 \theta^{-1}$.
Then, we have
\begin{eqnarray*}
\max_{\gamma}\{|C(\alpha' \gamma)-C(\gamma)|\}&=&\max_{\gamma}\{|C(\theta \beta_1\beta_2\gamma\theta^{-1})-C(\theta\gamma\theta^{-1})|\}\\
&=&\max_{\gamma}\{|C(p_t\theta\beta_2\theta^{-1}\theta\gamma\theta^{-1})-C(\theta\gamma\theta^{-1})|\}\\
&=& n+1-C(p_t\theta\beta_2\theta^{-1})\\
&=& n+1- C(\beta_1\beta_2)=n+1-C(\alpha').
\end{eqnarray*}
So the theorem holds for even permutations.
Next we assume that $\alpha$ is an odd permutation. If $C(\alpha)<n$, then we can always find a transposition $\tau$ (i.e., a cycle of length $2$)
such that $\alpha= \alpha' \tau$, where $\alpha'$ is an even permutation and $C(\alpha)=C(\alpha')-1$. Thus,
\begin{align*}
\max_{\gamma}\{|C(\alpha \gamma)-C(\gamma)|\}&= \max_{\gamma}\{|C(\alpha' \tau\gamma)-C(\gamma)|\}\\
&=\max_{\gamma}\{|C(\alpha' \tau\gamma)-C(\tau\gamma)+C(\tau\gamma)-C(\gamma)|\}\\
& \leq \max_{\gamma}\{|C(\alpha' \tau\gamma)-C(\tau\gamma)|+|C(\tau\gamma)-C(\gamma)|\}\\
&=[n-C(\alpha')]+1=n-C(\alpha).
\end{align*}
Note that $|C(\alpha I)-C(I)|=n-C(\alpha)$, where $I$ is the the identity permutation.
Hence, we conclude that $\max_{\gamma}\{|C(\alpha \gamma)-C(\gamma)|\}=n-C(\alpha)$.
When $C(\alpha)=n$, i.e., $\alpha=I$, it is obvious that $\max_{\gamma}\{|C(I \gamma)-C(\gamma)|\}=0=n-C(\alpha)$.
Hence, the theorem holds for odd permutations as well, completing the proof.\qed
\end{proof}

\section{Reversal distance for signed permutations}
In this section, we consider the reversal distance for signed permutations, a problem
extensively studied in the context of genome evolution~\cite{pev2,pev3,yan} and the references therein.
Lower bounds for the reversal
distance based on the breakpoint graph model were obtained in~\cite{pev2,pev3,pev4}.

In our framework the reversal distance problem can be expressed as a
block-interchange distance problem. A lower bound can be easily obtained in
this point of view, and the lower bound will be shown to be the exact reversal distance
for most of signed permutations.

Let $[n]^{-}=\{-1,-2,\ldots, -n\}$.
\begin{definition}
A signed permutation on $[n]$ is a pair $(a,w)$ where $a$ is
a sequence on $[n]$ while $w$ is a word of length $n$ on the
alphabet set $\{+,-\}$.
\end{definition}
Usually, a signed permutation is represented by a single sequence
$a_w=a_{w,1}a_{w,2}\cdots a_{w,n}$ where $a_{w,k}=w_ka_k$, i.e.,
each $a_k$ carries a sign determined by $w_k$.

Given a signed permutation $a=a_1a_2\cdots a_{i-1}a_i a_{i+1}\cdots a_{j-1}
a_j a_{j+1}\cdots a_n$ on $[n]$, a reversal $\varrho_{i,j}$ acting on
$a$ will change $a$ into
$$
a'=\varrho_{i,j}\diamond a =a_1a_2\cdots a_{i-1}(-a_j)(-a_{j-1})\cdots
(-a_{i+1})(-a_i)a_{j+1}\cdots a_n.
$$
The reversal distance $d_r(a)$ of a signed permutation $a$ on $[n]$ is the minimum
number of reversals needed to sort $a$ into $e_n=12\cdots n$.

For the given signed permutation $a$, we associate the sequence $s=s(a)$ as follows
$$
s=s_0s_1s_2\cdots s_{2n}=0a_1a_2\cdots a_n (-a_n)(-a_{n-1})\cdots (-a_2)(-a_1),
$$
i.e., $s_0=0$ and $s_{k}=-s_{2n+1-k}$ for $1\leq k\leq 2n$. Furthermore, such sequences
will be referred to as skew-symmetric sequences since we have $s_{k}=-s_{2n+1-k}$. A sequence
$s$ is called exact if there exists $s_i<0$ for some $1\le i\le n$.
The reversal distance of $a$ is equal to the block-interchange distance of $s$
into
$$
e_n^{\natural}=012\cdots n (-n)(-n+1)\cdots (-2) (-1),
$$
where only certain block-interchanges are allowed, i.e., only the actions $\chi_h$,
$h=(i,j,2n+1-j,2n+1-i)$ are allowed where $1\leq i\leq j\leq n$. Hereafter, we will denote
these particular block-interchanges on $s$ as reversals, $\varrho_{i,j}$.

Let
\begin{eqnarray*}
\tilde{s}&=&(s)=(0 ~a_1 ~a_2 ~\ldots ~a_{n-1} ~a_n ~-a_n ~-a_{n-1} ~\ldots ~-a_2 ~-a_1),\\
p_r&=&(-1 \quad -2 \quad\cdots \quad -n+1 \quad -n \quad n \quad n-1 \quad\cdots \quad 2 \quad 1 \quad 0).
\end{eqnarray*}
A plane permutation of the form $(\tilde{s},\pi)$ will be called skew-symmetric.

\begin{theorem}\label{7thm1}
\begin{equation}\label{5eq1}
d_r(a)\geq \frac{2n+1-C(p_r\tilde{s})}{2}.
\end{equation}
\end{theorem}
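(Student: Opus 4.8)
The plan is to mirror the proof of Theorem~\ref{5thm1} almost verbatim, now carried out with the skew-symmetric data $(\tilde{s},p_r)$ in place of $(\bar{s},p_t)$ and with reversals playing the role of transposes. First I would fix an \emph{arbitrary} permutation $\gamma$ on the ground set $[n]^*\cup[n]^-$ (of cardinality $2n+1$) and form the plane permutation $\mathfrak{p}=(\tilde{s},\gamma)$. The setup already records that a reversal $\varrho_{i,j}$ acting on $s$ is exactly the admissible block-interchange $\chi_h$ with $h=(i,j,2n+1-j,2n+1-i)$ acting on $\mathfrak{p}$; the sign flips are automatic, since the second block $[s_{2n+1-j},s_{2n+1-i}]=(-a_j)\cdots(-a_i)$ is the negated reverse of the first block $[s_i,s_j]=a_i\cdots a_j$, so interchanging them reverses and negates. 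In particular, each reversal is a \emph{single} block-interchange (the auxiliary element $0=s_0$ being present precisely to absorb the forward shift of the bottom row, as in Theorem~\ref{5thm1}).

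Next I would take an optimal sorting of $a$, using $d_r(a)$ reversals to carry $s$ into $e_n^{\natural}$. Correspondingly $\mathfrak{p}$ is carried to a plane permutation $(\hat{e},\beta)$, where $\hat{e}=(0~1~\cdots~n~-n~-(n-1)~\cdots~-1)$ is the cycle reading off $e_n^{\natural}$ and $\beta$ is some permutation. Since block-interchanges leave the diagonal invariant, $D_{\mathfrak{p}}=\tilde{s}\gamma^{-1}=\hat{e}\beta^{-1}$, and because $p_r=\hat{e}^{-1}$ this yields $\beta=\gamma\tilde{s}^{-1}\hat{e}=\gamma(p_r\tilde{s})^{-1}$.

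The crucial ingredient is that each block-interchange alters the number of $\pi$-cycles by at most $2$: for $h=(i,j,2n+1-j,2n+1-i)$ with $j<n$ one has $j+1<2n+1-j$, so this is Lemma~\ref{2lem5}, while for the boundary case $j=n$ one has $2n+1-j=j+1$, so $\chi_h$ is a transpose and Lemma~\ref{2lem3} applies. Hence the $d_r(a)$ reversals change the cycle count by at most $2\,d_r(a)$, giving $d_r(a)\ge\tfrac12|C(\beta)-C(\gamma)|=\tfrac12|C(p_r\tilde{s}\gamma^{-1})-C(\gamma^{-1})|$, where I used $C(X)=C(X^{-1})$. As $\gamma$ is arbitrary I may replace $\gamma^{-1}$ by $\gamma$; choosing $\gamma=(p_r\tilde{s})^{-1}$ makes $p_r\tilde{s}\gamma=\mathrm{id}$ with $C=2n+1$ while $C(\gamma)=C(p_r\tilde{s})$, and the stated bound $d_r(a)\ge\tfrac{2n+1-C(p_r\tilde{s})}{2}$ falls out.

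The cycle-counting algebra is routine. The one point that genuinely needs care — and the main obstacle — is confirming that each reversal is faithfully realized as the single admissible block-interchange $\chi_h$, tracking the skew-symmetric pairing $s_k=-s_{2n+1-k}$ and the role of the auxiliary $0$, so that Lemmas~\ref{2lem5} and~\ref{2lem3} may legitimately be invoked to bound the per-step change in $C$. Once that identification is secure, the remainder parallels Theorem~\ref{5thm1} exactly, and no preservation of skew-symmetry across intermediate steps is needed, since only the total count of block-interchange operations enters the lower bound.
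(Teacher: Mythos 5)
Your proof is correct, but its logical route differs from the paper's, which is a two-line reduction: since reversals are restricted block-interchanges, $d_r(a)$ is at least the \emph{unrestricted} block-interchange distance of $s$ into $e_n^{\natural}$, and Theorem~\ref{5thm2} (Christie's exact formula) evaluates that distance as exactly $\frac{2n+1-C(p_r\tilde{s})}{2}$. You bypass Theorem~\ref{5thm2} entirely and instead re-run the lower-bound machinery of Theorem~\ref{5thm1} on the skew-symmetric data: an arbitrary $\gamma$, invariance of the diagonal under $\chi_h$ giving $\beta=\gamma(p_r\tilde{s})^{-1}$, the per-step bound on the change of cycle number via Lemmas~\ref{2lem5} and~\ref{2lem3} (your separate treatment of the boundary case $j=n$, where $\chi_h$ degenerates to a transpose, is a detail the paper's reduction never has to make explicit), and finally the evaluation at $\gamma=(p_r\tilde{s})^{-1}$. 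The trade-off is clear: your argument has strictly lighter dependencies, since it uses only the ``easy'' half of the block-interchange theory --- in particular it never needs Lemma~\ref{5lem1}, the constructive step showing that a cycle-increasing block-interchange always exists, which is the substantive content behind the equality in Theorem~\ref{5thm2}. What the paper's route buys is brevity and the conceptual framing $d_r(a)\geq bid(s)$ that organizes the remainder of Section~6 (namely, asking when the restricted moves, i.e., $2$-reversals, can actually attain the unrestricted bound); your route instead re-proves, in self-contained form, the one inequality that is needed. Your closing observations --- that skew-symmetry of intermediate sequences is not needed for the counting argument, and that the only point requiring care is the identification of a reversal with a single admissible $\chi_h$ --- are both sound.
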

\begin{proof} Since reversals are restricted block-interchanges, the reversal distance will be
bounded by the block-interchange distance without restriction. Theorem~\ref{5thm2} then implies Eq.~\eqref{5eq1}. \qed
\end{proof}

Our approach gives rise to the question of how potent the restricted block-interchanges are.
Is it difficult to find a block-interchange increasing the number of cycles by $2$ that is a
reversal (i.e., $2$-reversal)?

We will call a plane permutation $(\tilde{s},\pi)$ exact, skew-symmetric if
$\tilde{s}$ is exact and skew-symmetric. The following lemma will show that there is almost
always a $2$-reversal.

\begin{lemma}\label{7lem1}
Let $\mathfrak{p}=(\tilde{s},\pi)$ be exact and skew-symmetric on $[n]^*\cup[n]^{-}$,
where $D_{\mathfrak{p}}=p_r^{-1}$. Then, there always exist $i-1$ and $2n-j$ such that
\begin{equation}
\pi({s}_{i-1})={s}_{2n-j},
\end{equation}
where $0\leq i-1\leq n-1$ and $n+1\leq 2n-j \leq 2n$. Furthermore,
we have the following cases
\begin{itemize}
\item[(a)] If $s_{i-1}<_s s_{j}<_s s_{2n-j} <_s s_{2n+1-i}$, then
\begin{equation}
\pi({s}_{i-1})={s}_{2n-j}, \quad \pi(s_{j})=s_{2n+1-i}.
\end{equation}
\item[(b)] If $s_{j}<_s s_{i-1}<_s s_{2n+1-i}<_s s_{2n-j} $, then
\begin{equation}
\pi({s}_{i-1})={s}_{2n-j},\quad \pi(s_{j})=s_{2n+1-i}.
\end{equation}
\end{itemize}
\end{lemma}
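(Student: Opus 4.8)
The plan is to adapt the proof of Lemma~\ref{5lem1} to the skew-symmetric setting, using two ingredients: the diagonal identity for $D_{\mathfrak{p}}=p_r^{-1}$ and the skew-symmetry $s_k=-s_{2n+1-k}$. First I would record the identity that drives everything: since $D_{\mathfrak{p}}(\pi(s_{m-1}))=s_m$ holds cyclically and $D_{\mathfrak{p}}=p_r^{-1}$, one has
\[
\pi(s_{m-1})=p_r(s_m)\qquad\text{for all }m .
\]
I would then write $p_r$ explicitly from its cycle form: $p_r(-k)=-(k+1)$ for $1\le k\le n-1$, $p_r(-n)=n$, $p_r(k)=k-1$ for $1\le k\le n$, and $p_r(0)=-1$; equivalently $D_{\mathfrak{p}}$ runs along the chain $0\to1\to\cdots\to n\to-n\to\cdots\to-1\to0$.

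Next I would translate the two target equations into a single condition. By skew-symmetry $s_{2n-j}=-s_{j+1}$ and $s_{2n+1-i}=-s_i$, so $\pi(s_{i-1})=s_{2n-j}$ and $\pi(s_j)=s_{2n+1-i}$ become
\[
p_r(s_i)=-s_{j+1},\qquad p_r(s_{j+1})=-s_i .
\]
The key observation is that these are equivalent: the negation map $\nu\colon x\mapsto -x$ satisfies $\nu\,p_r\,\nu=p_r^{-1}$ (a direct check on the four cases), whence $p_r(s_i)=-s_{j+1}$ forces $p_r(s_{j+1})=-s_i$ automatically. Thus it suffices to produce one first-half index $i$ (so $1\le i\le n$, giving $0\le i-1\le n-1$) for which $v:=-p_r(s_i)$ is again a value occurring in the first half; setting $j+1$ to be the position of $v$ (so $1\le j+1\le n$, giving $n+1\le 2n-j\le 2n$) yields exactly the asserted relations and index ranges.

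For existence I would use exactness directly. For each magnitude $m\in[n]$ exactly one of $+m,-m$ occurs among $s_1,\dots,s_n$, and I would seek a magnitude $m\in\{1,\dots,n-1\}$ with $-m$ in the first half and $+(m+1)$ also in the first half. Given such an $m$, since $p_r(-m)=-(m+1)$ I take $u=s_i=-m$, so that $v=-p_r(u)=m+1$ lies in the first half and is distinct from $u$, a non-degenerate pair. The two displayed orderings then correspond to the two possible relative positions of $-m$ and $m+1$: writing $i$ for the position of $u$ and $j+1$ for that of $v$, a short computation gives $i-1<j\iff 2n-j<2n+1-i$, so $i\le j$ produces $s_{i-1}<_s s_j<_s s_{2n-j}<_s s_{2n+1-i}$ (Case~(a)) while $i\ge j+2$ produces $s_j<_s s_{i-1}<_s s_{2n+1-i}<_s s_{2n-j}$ (Case~(b)); both genuinely occur.

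The main obstacle is the configuration in which no such $m$ exists. Reading the signs of $1,\dots,n$ off the first half in increasing order of magnitude, the absence of any $-m$ followed by $+(m+1)$ means the signs form a (possibly empty) run of $+$'s followed by a run of $-$'s; exactness makes the $-$-run nonempty, so $-n$ sits in the first half and, because $p_r(-n)=n$, the only admissible choice is $u=-n$, which returns $v=-p_r(-n)=-n=u$. Then $i=j+1$, the four indices collapse to the two positions $i-1$ and $2n+1-i$, and the strict chains of both (a) and (b) fail. This is precisely the transpose boundary---the analogue of the ``$=_{\bar{s}}$'' alternative in Lemma~\ref{5lem1} that is realized as a Case~$2$ transpose (Lemma~\ref{2lem4}) in Theorem~\ref{5thm2}---so I would record it as a separate transpose case, in which the single surviving relation $\pi(s_{i-1})=s_{2n+1-i}$ plays the role of both displayed equations. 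Dealing correctly with this degenerate branch, and distinguishing it from the generic block-interchange of Lemma~\ref{2lem5}, is the delicate point and is where I would concentrate the effort.
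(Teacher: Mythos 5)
Your proposal is correct, and it takes a route that genuinely differs from the paper's in two respects. For parts (a) and (b), the paper verifies $\pi(s_j)=-s_{2n-j}-1=s_{2n+1-i}$ by writing out the two-row arrays explicitly from $D_{\mathfrak{p}}=p_r^{-1}$ and skew-symmetry, once for each of the two orderings; your observation that the negation map $\nu$ conjugates $p_r$ to $p_r^{-1}$, so that $p_r(s_i)=-s_{j+1}$ and $p_r(s_{j+1})=-s_i$ are equivalent, handles both cases in one stroke and in fact shows the ordering hypotheses in (a) and (b) are immaterial to the conclusion. For the existence part, the paper argues more briefly than you do: it takes $s_i$ to be the smallest negative entry of $s_1\cdots s_n$ (exactness provides one); if $s_i=-n$ then $\pi(s_{i-1})=p_r(-n)=n=s_{2n+1-i}$, the degenerate pair, and otherwise $\pi(s_{i-1})=s_i-1$ is a negative value below every first-half entry, hence located in the second half. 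Your sign-run dichotomy is longer but buys strictly more: it characterizes exactly when only the degenerate pair $2n-j=2n+1-i$ exists (signs, read in increasing magnitude, form a $+$-run followed by a nonempty $-$-run, forcing $u=v=-n$), which is precisely the critical configuration the paper must isolate in its discussion after the lemma and in Conjecture~\ref{7conj1}. One small remark: the lemma's existence claim permits $2n-j=2n+1-i$, and (a), (b) are conditionals that become vacuous there, so your ``transpose boundary'' branch already completes the proof as stated; the extra care you flag about distinguishing it from the block-interchanges of Lemma~\ref{2lem5} belongs to the application (Theorem~\ref{7thm1} and the $2$-reversal discussion), not to the lemma itself.
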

\begin{proof} We firstly prove the former part.
Assume $s_{i}$ is the smallest negative element among the subsequence $s_1s_2\cdots s_n$.
If $s_{i}=-n$, then we have $s_{2n+1-i}=-s_i=n$ by symmetry. Since $D_{\mathfrak{p}}=p_r^{-1}$,
for any $k$, $s_{k+1}=p_r^{-1}(s_{k})=s_k+1$ where $n+1$ is interpreted as $-n$. Thus,
$\pi(s_{i-1})=D_{\mathfrak{p}}^{-1}(s_i)=D_{\mathfrak{p}}^{-1}(-n)=n=s_{2n+1-i}$.
Let $2n-j=2n+1-i$, then $2n-j\geq n+1$ and we are done.
If $s_{i}>-n$, then we have $\pi(s_{i-1})=D_{\mathfrak{p}}^{-1}(s_i)=s_{i}-1\geq -n$.
Since $s_{i}$ is the smallest negative element among $s_t$ for $1\leq t\leq n$,
if $s_{2n-j}=s_{i}-1< s_i$, then $2n-j\geq n+1$, whence the former part.

Using $D_{\mathfrak{p}}=p_r^{-1}$ and the skew-symmetry $s_k=-s_{2n+1-k}$,
we have in case of (a) the following situation in $\mathfrak{p}$ (only relevant entries are illustrated)
\begin{eqnarray*}
\left(\begin{array}{cccccccccccc}
i-1 & i &\cdots & j & j+1 & \cdots & 2n-j & 2n+1-j & \cdots & 2n+1-i \\\hline
s_{i-1}&(s_{2n-j}+1)&\cdots & s_{j}&-s_{2n-j}&\cdots & s_{2n-j}&-s_{j}& \cdots & (-s_{2n-j}-1) \\
s_{2n-j}&\diamondsuit & \cdots & (-s_{2n-j}-1)&\diamondsuit &\cdots &\diamondsuit&\diamondsuit &\cdots & \diamondsuit
\end{array}\right).
\end{eqnarray*}
Therefore, we have
\begin{eqnarray*}
\pi(s_{i-1})=s_{2n-j},\quad \pi(s_{j})=-s_{2n-j}-1=s_{2n+1-i}.
\end{eqnarray*}
Analogously we have in case of $(b)$ the situation
\begin{eqnarray*}
\left(\begin{array}{cccccccccccc}
j & j+1 & \cdots & i-1 & i &\cdots &  2n+1-i & 2n+2-i & \cdots & 2n-j \\\hline
s_{j}& -s_{2n-j}&\cdots & s_ {i-1}& s_{2n-j}+1 &\cdots & -s_{2n-j}-1 &-s_{i-1}& \cdots & s_{2n-j} \\
-s_{2n-j}-1 &\diamondsuit & \cdots & s_{2n-j}&\diamondsuit &\cdots &\diamondsuit&\diamondsuit &\cdots & \diamondsuit
\end{array}\right).
\end{eqnarray*}
Therefore, we have
\begin{eqnarray*}
\pi(s_{{i-1}})=s_{2n-j},\quad \pi(s_{j})=-s_{2n-j}-1=s_{2n+1-i}.
\end{eqnarray*}
This completes the proof. \qed
\end{proof}

{\bf Remark.} The pair $s_{i-1}$ and $s_{2n-j}$ such that $\pi({s}_{i-1})={s}_{2n-j}$ is not unique.
For instance, assume the positive integer $k$, $1 \leq k \leq n-1$, is not in the subsequence $s_1s_2
\cdots s_n$ but $k+1$ is, then $\pi^{-1}(k)$ and $k=D_{\mathfrak{p}}^{-1}(k+1)$ form such a pair.


Inspection of Lemma~\ref{2lem5} and Lemma~\ref{7lem1} shows that
there is almost always a $2$-reversal for signed permutations.
The only critical cases, not covered in Lemma~\ref{7lem1}, are
\begin{itemize}
\item The signs of all elements in the given signed permutation are positive.
\item Exact signed permutation which for $1\leq i \leq n$ and $ n+1\leq 2n-j$,
$\pi(s_{i-1})=s_{2n-j}$ iff $2n-j=2n+1-i$.
\end{itemize}
We proceed to analyze the latter case. Since $\pi(s_{i-1})=s_{2n+1-i}=-s_i$, we have
\begin{eqnarray*}
\left(\begin{array}{cccccccc}
s_{i-1}&s_{i}&\cdots& s_n&-s_n&\cdots &s_{2n+1-i}&s_{2n+2-i}\\
\pi(s_{i-1})&\diamondsuit&\cdots &\diamondsuit&\diamondsuit&\cdots &\diamondsuit&\diamondsuit
\end{array}\right)
=\left(\begin{array}{cccccccc}
s_{i-1}&s_{i}&\cdots& s_n&-s_n&\cdots &-s_{i}&-s_{i-1}\\
-s_{i}&\diamondsuit&\cdots &\diamondsuit&\diamondsuit&\cdots &\diamondsuit&\diamondsuit
\end{array}\right).
\end{eqnarray*}

Due to $D_{\mathfrak{p}}$, $D_{\mathfrak{p}}(-s_{i})=s_{i}=-s_{i}+1$ (note that $n+1$ is interpreted as $-n$).
The only situation satisfying this condition is that $s_{i}=-n$, i.e.,
the sign of $n$ in the given signed permutation is negative.
Then, we have $\pi(s_{i-1})=s_{2n-j}=s_{2n+1-i}=n$.
We believe that in this case Lemma~\ref{2lem4} (instead of Lemma~\ref{2lem5}) provides a $2$-reversal.
Namely, $s_{i-1}$ (i.e., the preimage of $n$),~$s_n$ and $n=s_{2n+1-i}$ will form a Case~$2$ transpose
in Lemma~\ref{2lem4}, which will be true if $n$ and $s_n$ are in the same cycle of $\pi$, i.e.,
$\pi$ has a cycle $(s_{i-1}  ~s_{2n+1-i} ~\ldots ~s_n ~\ldots)$.
In order to illustrate this we consider
\begin{example}
\begin{eqnarray*}
\left(\begin{array}{ccccccccc}
0&-3&1&2&-4&4&-2&-1&3\\
-4&0&1&4&3&-3&-2&2&-1
\end{array}\right) \Longrightarrow &\pi =&(0 ~-4 ~3 ~-1 ~2 ~4 ~-3)(1)(-2)\\
\left(\begin{array}{ccccccccc}
0&2&-4&-1&3&-3&1&4&-2\\
1&4&-2&2&-4&0&3&-3&-1
\end{array}\right)\Longrightarrow &\pi =&(0 ~1 ~3 ~-4 ~-2 ~-1 ~2 ~4 ~-3)
\end{eqnarray*}
We inspect, that in the first case $s_{i-1}=2$,~$s_n=-4$ and $n=4$ form a Case~$2$ transpose of Lemma~\ref{2lem4}.
In the second case $s_{i-1}=2$,~$s_n=3$ and $n=4$ form again a Case~$2$ transpose of Lemma~\ref{2lem4}.
\end{example}

Therefore, we conjecture
\begin{conjecture}\label{7conj1}
Let $\mathfrak{p}=(\tilde{s},\pi)$ be exact, skew-symmetric on $[n]^*\cup[n]^{-}$
where $D_{\mathfrak{p}}=p_r^{-1}$ and suppose $\pi(s_{i-1})=s_{2n+1-i}$, where $1\leq i\leq n$.
Then, $n$ and $s_n$ are in the same cycle of $\pi$.
\end{conjecture}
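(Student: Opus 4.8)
The plan is to prove the stronger, hypothesis-free statement that $n$ and $s_n$ lie in a common cycle of $\pi$ for \emph{every} skew-symmetric plane permutation $\mathfrak{p}=(\tilde{s},\pi)$ with $D_{\mathfrak{p}}=p_r^{-1}$; the assumptions that $\mathfrak{p}$ be exact and that $\pi(s_{i-1})=s_{2n+1-i}$ will turn out to be unnecessary. By the Observation $D_{\mathfrak{p}}=\tilde{s}\,\pi^{-1}$, the hypothesis $D_{\mathfrak{p}}=p_r^{-1}$ is equivalent to $\pi=p_r\tilde{s}$. First I would introduce the sign-flip involution $\sigma$ on $[n]^{*}\cup[n]^{-}$ given by $\sigma(x)=-x$ (so $\sigma(0)=0$) and record two reflection identities: $\sigma\tilde{s}\sigma=\tilde{s}^{-1}$ and $\sigma p_r\sigma=p_r^{-1}$. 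The first is precisely the skew-symmetry $s_k=-s_{2n+1-k}$ transported through the cyclic shift defining $\tilde{s}$; the second is the one-line observation that negating every entry of the cycle $p_r=(-1\ \cdots\ -n\ n\ \cdots\ 1\ 0)$ yields the cycle $(0\ 1\ \cdots\ n\ -n\ \cdots\ -1)=p_r^{-1}$.

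Next I set $\tau=p_r\sigma$. Since $(p_r\sigma)^2=p_r(\sigma p_r\sigma)=p_rp_r^{-1}=\mathrm{id}$, $\tau$ is an involution; computing its action shows $\tau(n)=p_r(-n)=n$ while every other element is paired in a transposition, so $n$ is the unique fixed point of $\tau$. The two reflection identities then give $\sigma\pi\sigma=(\sigma p_r\sigma)(\sigma\tilde{s}\sigma)=p_r^{-1}\tilde{s}^{-1}$, whence $\tau\pi\tau^{-1}=p_r(\sigma\pi\sigma)p_r^{-1}=p_r(p_r^{-1}\tilde{s}^{-1})p_r^{-1}=\tilde{s}^{-1}p_r^{-1}=\pi^{-1}$. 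Consequently $\rho:=\tau\pi$ is a second involution, because $\rho^2=(\tau\pi)(\tau\pi)=\tau(\pi\tau)\pi=\tau(\tau\pi^{-1})\pi=\mathrm{id}$, and it satisfies $\pi=\tau\rho$. Its fixed points are exactly the $x$ with $\pi(x)=\tau(x)$; writing $x=s_{k-1}$ and using the diagonal relation $\pi(s_{k-1})=p_r(s_k)$ together with $\tau(s_{k-1})=p_r(-s_{k-1})$, this reduces to $s_k=-s_{k-1}$. As consecutive entries of the skew-symmetric sequence have distinct absolute values except at the central step $s_{n+1}=-s_n$, the unique solution is $x=s_n$, so $s_n$ is the unique fixed point of $\rho$.

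I would then exploit that $\pi=\tau\rho$ is a product of two involutions. Consider the graph $G$ on $[n]^{*}\cup[n]^{-}$ whose edges are the transpositions of $\tau$ together with the transpositions of $\rho$. Each vertex meets at most one $\tau$-edge and at most one $\rho$-edge, so $G$ is a disjoint union of paths and cycles, and a vertex has degree one exactly when it is fixed by one of $\tau,\rho$ but not the other. Under the conjecture's hypotheses the sign of $n$ is negative, so $s_n=a_n\neq n$, and hence $G$ has precisely two degree-one vertices, namely $n$ and $s_n$. Since in a disjoint union of paths and cycles the number of endpoints is even, with each path contributing two, $G$ has exactly one path component, and its endpoints are $n$ and $s_n$.

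The final, and most delicate, step is to trace $\pi=\tau\rho$ along this path. Labelling its vertices $v_0=n,v_1,\dots,v_L=s_n$ (the edges alternate in type, with the $\rho$-edge incident to $v_0$ and the $\tau$-edge incident to $v_L$), one checks that $L$ is even and that applying $\rho$ then $\tau$ sends $v_0\mapsto v_2\mapsto\cdots\mapsto v_L$, while the return run $v_L\mapsto v_{L-1}\mapsto v_{L-3}\mapsto\cdots\mapsto v_1\mapsto v_0$ closes the orbit; thus the entire path forms a single $\pi$-cycle containing both $n$ and $s_n$. This bookkeeping---verifying the parity of $L$ and that the alternating traversal does not split into several $\pi$-orbits---is where the main difficulty lies, since the pure cycle components of $G$ do, by contrast, generally split into two $\pi$-cycles (as already happens for the $\{1,-2\}$ component in the first worked example). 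Once this is established, $n$ and $s_n$ share a cycle of $\pi$, so that $s_{i-1},s_n,n$ form a Case~$2$ transpose of Lemma~\ref{2lem4}, as desired.
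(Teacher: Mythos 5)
The statement you were asked to prove is not actually proved anywhere in the paper: it is left open as Conjecture~\ref{7conj1}, supported only by the local analysis that the hypothesis $\pi(s_{i-1})=s_{2n+1-i}$ forces $s_i=-n$, plus two worked examples, with the proof deferred to the later reference~\cite{bchr} (listed as in preparation). So there is no paper proof to compare against, and judged on its own merits your argument is correct; indeed it establishes the stronger, hypothesis-free Conjecture~\ref{7conj2}, which implies Conjecture~\ref{7conj1}. I checked the key steps: $\sigma\tilde{s}\sigma=\tilde{s}^{-1}$ is exactly the skew-symmetry $s_k=-s_{2n+1-k}$, and $\sigma p_r\sigma=p_r^{-1}$ holds by inspection; hence $\tau=p_r\sigma$ is an involution whose unique fixed point is $n$, and $\tau\pi\tau=\pi^{-1}$, so $\rho=\tau\pi$ is an involution with $\pi=\tau\rho$, whose fixed points are the $s_{k-1}$ satisfying $s_k=-s_{k-1}$; the only such position is the central step (the wrap-around pair $(-a_1,0)$ also fails), so $s_n$ is the unique $\rho$-fixed point. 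When $n\neq s_n$ the multigraph of $\tau$- and $\rho$-transpositions has exactly two degree-one vertices, hence exactly one path component, with endpoints $n$ and $s_n$; the alternation of edge types forces the path length $L$ to be even, and $\pi=\tau\rho$ traverses the whole path as the single cycle $(v_0~v_2~\cdots~v_L~v_{L-1}~v_{L-3}~\cdots~v_1)$, which I verified; when $n=s_n$ the conclusion is trivial, so your appeal to the conjecture's hypothesis at that point is unnecessary. One caveat of presentation rather than substance: your criterion ``degree one iff fixed by exactly one of $\tau,\rho$'' requires the multigraph convention, under which a transposition shared by $\tau$ and $\rho$ counts as a $2$-cycle component rather than a path of length one; your own aside about the $\{1,-2\}$ component in the paper's first example shows you intend exactly this convention. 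What your route buys is a short structural proof---a product of two involutions with unique fixed points plus an alternating-path argument, close in spirit to the breakpoint-graph involutions $\theta_1,\theta_2$ that the paper discusses at the end of Section~6---where the paper itself offers only supporting evidence.
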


Lemma~\ref{7lem1}, Conjecture~\ref{7conj1} and an analysis of the preservation of exactness
under $2$-reversals suggest, that for a random signed permutation, it is likely to be
possible to transform $s$ into $e_n^{\natural}$ via a sequence of $2$-reversals.
In fact, many examples, including Braga~\cite[Table $3.2$]{braga}, indicate that the lower bound of
Theorem~\ref{7thm1} gives the exact reversal distances.

Note that the lower bound obtained in~\cite{pev2,pev4} via the break point graph also provides
the exact reversal distance for most of signed permutations although the exact reversal distance
was formulated later in~\cite{pev3}.
Now we give a brief comparison of our formula Eq.~\eqref{5eq1} and the lower bound via break point graph.
The break point graph for a given signed permutation $a=a_1a_2\cdots a_n$ on $[n]$ can be obtained as follows:
replacing $a_i$ with $(-a_i) a_i$, and adding $0$ at the beginning of the obtained sequence while
adding $-(n+1)$ at the end of the obtained sequence,
in this way we obtain a sequence $b=b_0b_1b_2\cdots b_{2n}b_{2n+1}$ on $[n]^* \cup [n+1]^{-}$.
Draw a black edge between $b_{2i}$ and $b_{2i+1}$, as well as a grey edge between $i$ and $-(i+1)$
for $0\leq i \leq n$. The obtained graph is the break point graph $BG(a)$ of $a$.
Note that each vertex in $BG(a)$ has degree two so that it can be decomposed into disjoint cycles.
Denote the number of cycles in $BG(a)$ as $C_{BG}(a)$. Then, the lower bound via the break point graph is
\begin{align}\label{bg-r}
d_r(a) \geq n+1-C_{BG}(a).
\end{align}

On the permutation group theory side, we refer the readers to~\cite{feimei,Meid} for discussion of
the reversal distance. Algebraically, we can express Eq.~\eqref{bg-r} in a form similar to our lower bound.
Let $\theta_1$,~$\theta_2$ be the two involutions (without fixed points) determined by
the black edges and grey edges in the break point graph, respectively, i.e.,
\begin{align*}
\theta_1 &=(b_0 ~b_1)(b_2 ~b_3)\cdots (b_{2n} ~b_{2n+1}),\\
\theta_2 &=(0 ~-1)(1 ~-2)\cdots (n ~-n-1).
\end{align*}
It is not hard to observe that $C_{BG}(a)=\frac{C(\theta_1 \theta_2)}{2}$.
Therefore, we have
\begin{proposition}
\begin{align}\label{bp-r2}
d_r(a)\geq \frac{2n+2- C(\theta_1 \theta_2)}{2}.
\end{align}
\end{proposition}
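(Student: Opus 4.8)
The plan is to derive the proposition directly from the break point graph lower bound~\eqref{bg-r}, namely $d_r(a)\geq n+1-C_{BG}(a)$, by substituting the stated identity $C_{BG}(a)=C(\theta_1\theta_2)/2$. Thus essentially all the content lies in verifying that the number of cycles of the product $\theta_1\theta_2$ is exactly twice the cycle count $C_{BG}(a)$ of the break point graph; once this is established, the chain
\begin{equation*}
d_r(a)\geq n+1-C_{BG}(a)=n+1-\frac{C(\theta_1\theta_2)}{2}=\frac{2n+2-C(\theta_1\theta_2)}{2}
\end{equation*}
gives the proposition immediately.

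First I would record the structural description of $BG(a)$. Both $\theta_1$ and $\theta_2$ are fixed point-free involutions on the common $(2n+2)$-element set $[n]^*\cup[n+1]^{-}$, with $\theta_1$ swapping the two endpoints of each black edge and $\theta_2$ swapping the two endpoints of each grey edge. Since every vertex is incident to exactly one black and exactly one grey edge, $BG(a)$ decomposes as a disjoint union of cycles whose edges alternate in color; these are precisely the alternating cycles counted by $C_{BG}(a)$, and each has even length $2m$ with $m$ black and $m$ grey edges.

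The key step is to show that a single alternating cycle of length $2m$ contributes exactly two cycles of $\theta_1\theta_2$, each of length $m$. Writing such a cycle as $v_0,v_1,\ldots,v_{2m-1}$ with black edges $(v_{2t},v_{2t+1})$ and grey edges $(v_{2t+1},v_{2t+2})$ (indices mod $2m$), one computes that $\sigma:=\theta_1\theta_2$ sends $v_{2t+1}\mapsto v_{2t+3}$ and $v_{2t}\mapsto v_{2t-2}$. Hence $\sigma$ restricts to one $m$-cycle on the odd-indexed vertices and a second $m$-cycle on the even-indexed vertices of the alternating cycle. Summing over all alternating cycles yields $C(\theta_1\theta_2)=2\,C_{BG}(a)$, which is the identity required above.

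I expect the only delicate point to be getting the incidences right so that the two resulting orbits are genuinely disjoint and each of length $m$, rather than fusing into a single $2m$-cycle; in particular one must confirm that the wrap-around grey edge at $v_0$ closes the cycle to $v_{2m-1}$, giving $\sigma(v_0)=\theta_1(v_{2m-1})=v_{2m-2}$. This is the standard fact that composing the two perfect matchings of an alternating-cycle graph splits each cycle in two, and beyond making this orbit bookkeeping precise I anticipate no essential obstacle.
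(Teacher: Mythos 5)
Your proposal is correct and takes essentially the same route as the paper: the paper also obtains the proposition by combining the break point graph bound \eqref{bg-r} with the identity $C_{BG}(a)=\frac{C(\theta_1\theta_2)}{2}$. The only difference is that the paper leaves this identity as an unproved remark (``it is not hard to observe''), whereas you supply the standard argument that composing the two fixed point-free involutions splits each alternating $2m$-cycle of $BG(a)$ into two disjoint $m$-cycles of $\theta_1\theta_2$, which is exactly the intended justification.
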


Since both our lower bound Eq.~\eqref{5eq1} and the lower bound Eq.~\eqref{bp-r2} provide
the exact reversal distance for most of signed permutations, that suggests, for most of signed permutations,
$$
C(p_r \tilde{s})= C(\theta_1\theta_2)-1=2 C_{BG}(a)-1.
$$

At the end of the discussion on reversals for signed permutations,
we present the following generalization of Conjecture~\ref{7conj1}:

\begin{conjecture}\label{7conj2}
Let $\mathfrak{p}=(\tilde{s},\pi)$ be skew-symmetric on $[n]^*\cup[n]^{-}$
where $D_{\mathfrak{p}}=p_r^{-1}$.
Then, $n$ and $s_n$ are in the same cycle of $\pi$.
\end{conjecture}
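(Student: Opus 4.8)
The plan is to recognize the permutation $\pi=p_r\tilde{s}$ as a product of two involutions whose unique fixed points are exactly $s_n$ and $n$, and then to read off the cycle structure from the resulting alternating-path picture. (Here $D_{\mathfrak{p}}=p_r^{-1}$ together with the observation $D_{\mathfrak{p}}=s\pi^{-1}$ forces $\pi=p_r\tilde{s}$.) First I would introduce the sign-reversal involution $\sigma$ on $[n]^*\cup[n]^-$ given by $\sigma(x)=-x$, with $\sigma(0)=0$. Inspecting the explicit cycle $p_r=(-1~-2~\cdots~-n~n~\cdots~1~0)$ shows $\sigma p_r\sigma=p_r^{-1}$, while the skew-symmetry $s_k=-s_{2n+1-k}$ of $\tilde{s}$ gives $\sigma\tilde{s}\sigma=\tilde{s}^{-1}$. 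Hence $\tau=\tilde{s}^{-1}\sigma$ and $\tau'=p_r\sigma$ are both involutions, and using $\sigma\tilde{s}^{-1}\sigma=\tilde{s}$ one checks $\tau'\tau=p_r\sigma\tilde{s}^{-1}\sigma=p_r\tilde{s}=\pi$. So $\pi=\tau'\tau$ is a product of two involutions.

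The decisive step is to locate the fixed points of $\tau$ and $\tau'$. Since $\tau(x)=\tilde{s}^{-1}(-x)$, a fixed point satisfies $\tilde{s}(x)=-x$; writing $x=s_k$ this reads $s_{k+1}=s_{2n+1-k}$, and because $2n+1$ is odd it forces $k=n$, so $s_n$ is the unique fixed point of $\tau$. Likewise $\tau'(x)=p_r(-x)=x$ can hold only at the single ``jump'' $p_r(-n)=n$, so $n$ is the unique fixed point of $\tau'$. I would then form the graph on $[n]^*\cup[n]^-$ whose edges are the non-trivial $\tau$-pairs and $\tau'$-pairs. Every vertex then has degree two except $s_n$ and $n$, which have degree one; consequently the components are alternating cycles together with exactly one alternating path, and its two endpoints are necessarily $s_n$ and $n$.

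Finally I would invoke the standard description of a product of two involutions: the vertex set of a single alternating path $s_n=x_0,x_1,\dots,x_L=n$ forms one cycle of $\pi=\tau'\tau$, namely $(x_0~x_1~x_3~\cdots~x_{L-1}~x_L~x_{L-2}~\cdots~x_2)$, as a short walk applying $\pi=\tau'\tau$ along the path confirms. This cycle contains both endpoints, so $n$ and $s_n$ lie in the same cycle of $\pi$; the degenerate case $s_n=n$ makes that common element a fixed point of $\pi$ and is immediate.

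The genuine insight here is the factorization $\pi=\tau'\tau$ together with the fact that the two prescribed elements appear precisely as the isolated fixed points of the two involutions; once that is in place, the remaining work---verifying the two conjugation identities and the path-to-cycle correspondence---is routine bookkeeping, and I expect the only delicate point to be handling the index arithmetic modulo $2n+1$ cleanly. I would also note that exactness is never used in this argument, so it would simultaneously settle Conjecture~\ref{7conj1} as the exact, skew-symmetric special case.
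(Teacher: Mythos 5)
Your proof is correct; there is, however, no proof in the paper to compare it against. The statement is left as an open conjecture there, and its resolution is deferred to the later work~\cite{bchr} cited in the conclusion, so your argument has to stand on its own --- and it does. I checked the key steps under the paper's conventions: $\pi=p_r\tilde{s}$ follows from the Observation $D_{\mathfrak{p}}=\tilde{s}\pi^{-1}=p_r^{-1}$; conjugating the explicit cycles by $\sigma$ gives $\sigma p_r\sigma=p_r^{-1}$, and skew-symmetry $s_k=-s_{2n+1-k}$ gives $\sigma\tilde{s}\sigma=\tilde{s}^{-1}$; the fixed-point counts are right, since $\tau=\tilde{s}^{-1}\sigma$ fixes $s_k$ iff $2k\equiv 2n\pmod{2n+1}$, forcing $k=n$, and $\tau'=p_r\sigma$ fixes only $n$. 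One point you should state explicitly: the graph of nontrivial $\tau$- and $\tau'$-pairs must be read as a multigraph, because $\tau(x)=\tau'(x)\neq x$ genuinely occurs --- in the paper's own example $\tilde{s}=(0~-3~1~2~-4~4~-2~-1~3)$, the pair $\{1,-2\}$ is simultaneously a $\tau$-orbit and a $\tau'$-orbit. Such components are doubled edges ($2$-cycles of the multigraph) whose vertices are fixed points of $\pi$; they can contain neither $s_n$ nor $n$, so your conclusion --- exactly one path component, with endpoints $s_n$ and $n$, on which $\pi=\tau'\tau$ acts as a single cycle --- is unaffected, but under a simple-graph reading the claim ``exactly two vertices of degree one'' would be false. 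A slicker finish that avoids the path bookkeeping: from $\tau\pi\tau=\pi^{-1}$ and $\tau(s_n)=s_n$, the involution $\tau$ reverses the $\pi$-orbit $O$ of $s_n$; if $|O|$ were even, $\tau$ would fix two elements of $O$, contradicting uniqueness of its fixed point, so $|O|$ is odd; then $\tau'$, which sends $\pi^k(s_n)\mapsto\pi^{1-k}(s_n)$ and hence preserves $O$, has a fixed point in $O$ (solve $2k\equiv 1$ modulo the odd number $|O|$), which by uniqueness must be $n$, so $n\in O$. Finally, you are right that exactness is never used, so your argument also settles Conjecture~\ref{7conj1} --- precisely the two statements the paper reports as proved elsewhere.
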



{
	
\section{Conclusion}
In this paper, we studied plane permutations.
We
studied the transposition action on plane permutations obtained by permuting
their diagonal-blocks.
We established basic properties of plane permutations and studied
transpositions and
exceedances and derived various enumerative results. We proved a recurrence
for the number
of plane permutations having a fixed diagonal and $k$ cycles in the
vertical,
generalizing Chapuy's recursion for maps filtered by the genus.

The plane permutation framework has many applications.
As the first application,
 we gave a new combinatorial proof for a result of Zagier and Stanley
by viewing ordinary permutations as a particular class of plane permutations and classify them by their diagonals.
 We combinatorially prove a new recurrence
 satisfied by the unsigned Stirling numbers of the first kind which is 
 the same recurrence for $\xi_{1,k}(n)$ derived from one of the obtained recurrences on 
 plane permutations so that the Zagier-Stanley result follows.
 
Next, motivated by the close connection between swap of segments in sequences and the transposition
action on plane permutations, 
as another application,
we integrated several results on the transposition and
block-interchange distance of permutations as well as the reversal
distance
of signed permutations.
This plane permutation framework is clearly not the same as the approach based on graph models. It is not like any work based on the permutation group theory either, although it uses permutations to discuss and express the results. Hence, we believe that the permutation framework here shed some new insights on these distance problems.
Specifically, we obtained general lower bounds for the transposition distance and the block-interchange distance,
which motivates certain optimization problems. 
The existing lower bounds obtained by Bafna-Pevzner~\cite{pev1}, Christie~\cite{chri2}, Lin et al.~\cite{lin}, Huang et al.~\cite{hhtl}, Labarre~\cite{labarre2}
can be refined by a particular choice of the free parameter in the general lower bound.
As to the reversal distance of signed permutations, we translated it into the block-interchange distances
of skew-symmetric sequences and immediately obtained a lower bound.

In addition plane permutations facilitated to study graph embeddings~\cite{ChR1}, specifically by making local adjustments
to the embeddings. It was crucial to allow for general diagonals in the plane permutations in this context.

As for future directions and outlook, we will study the above mentioned optimization problem for odd and even cycles
(Theorem~\ref{5thm1}). This is not only important in the context of the cycle-graph model or its variations, but also interesting
as a purely combinatorial problem.

Recently, Bura, Chen and Reidys~\cite{bchr} proved that our lower bound for the reversal distance equals the lower bound
obtained by Bafna and Pevzner, as conjectured by one anonymous referee. We furthermore proved Conjecture~\ref{7conj1} and Conjecture~\ref{7conj2}.

}

\begin{acknowledgements}
We thank the anonymous referees for their valuable feedback.
\end{acknowledgements}

\end{document}